\documentclass[12pt,twoside,final]{article}

\usepackage{chngpage}
\usepackage[a4paper,left=22mm,right=22mm,top=30mm,bottom=30mm]{geometry}
\usepackage{imakeidx}
\usepackage{xr-hyper}
\usepackage{xr}
\usepackage[all]{xy}
\usepackage[center]{titlesec}
\titleformat*{\section}{\large\bfseries}
\usepackage{enumerate,cite}
\usepackage[latin1]{inputenc}
\usepackage{amssymb}
\usepackage{amsthm}
\usepackage{amsmath}
\usepackage{amsfonts}
\usepackage{mathrsfs}
\usepackage{hyperref}
\usepackage{color}
\usepackage{graphicx}
\usepackage{setspace}
\usepackage{bm}
\usepackage{enumitem}
\usepackage{booktabs}
\usepackage[capitalize]{cleveref}
\usepackage{tikz}
\usetikzlibrary{matrix,chains}
\usepackage[center]{titlesec}
\titleformat*{\section}{\normalsize\bfseries}
\usepackage{indentfirst}
\usepackage{amscd}

\hypersetup{
    linkbordercolor=0 1 0,
    citebordercolor=0 1 0}

\newcommand{\Aut}{\operatorname{Aut}\nolimits}
\newcommand{\Inn}{\operatorname{Inn}\nolimits}

\newcommand{\IBr}{\operatorname{IBr}\nolimits}

\newcommand{\Irr}{\operatorname{Irr}\nolimits}

\newcommand{\PSL}{\operatorname{PSL}}

\newcommand{\bl}{\operatorname{bl}}

\newcommand{\cE}{\mathcal{E}}

\newcommand{\bG}{\mathbf{G}}

\theoremstyle{remark}

\theoremstyle{definition}

\theoremstyle{plain}
\newtheorem{thm}{Theorem}[section]

\newtheorem{lem}[thm]{Lemma}

\newtheorem{prop}[thm]{Proposition}

\newtheorem{conj*}{Conjecture}
\numberwithin{equation}{thm}

\usepackage{lastpage}
\usepackage{fancyhdr}

\begin{document}

\begin{center}{\Large\bf Blocks with a single automorphism orbit of irreducible Brauer characters
}

\bigskip{Fuming Jiang

\scriptsize School of Mathematics and Statistics, Southwest University, Chongqing, 400715, China}

\medskip{Kun Zhang

\scriptsize Faculty of Mathematics and
Statistics, Hubei University, Wuhan 430062,
China.}

\medskip{Yuanyang Zhou

\scriptsize School of Mathematics and
Statistics, Central China Normal University,
Wuhan, 430079, P.R. China}
\end{center}

\bigskip{\noindent\small{\bf Abstract}
Fix a prime $p$.
Let $G\trianglelefteq\tilde G$ be finite groups such that
$|\tilde G:G|$ is a power of $p$, and let $\tilde b$ be a block of
$\tilde G$ with abelian defect groups
covering a block $b$ of $G$. We prove that $\tilde b$ is inertial whenever
$\IBr(b)$ is a single $\Aut(G)_b$-orbit. This verifies the abelian-defect case of the Kessar--Linckelmann conjecture. As a key ingredient, we further show that a block of a finite quasisimple group with a single automorphism orbit of irreducible Brauer characters has abelian defect groups. As applications, we prove
the blockwise Alperin weight conjecture for blocks with exactly one irreducible
Brauer character and establish Puig's nilpotency conjecture.
 }

\medskip\noindent{\small{{{\bf Keywords}
block; irreducible Brauer character; automorphism orbit; inertial
}}}

\medskip\noindent{\small{{{\bf 2020 Mathematics Subject Classification} 20C20, 20C05, 20C33}}}

\section  {Introduction}

Blocks with exactly one irreducible Brauer character form a natural
but still incompletely understood class in modular representation theory.
Their defining condition is character-theoretic, whereas the expected
conclusions concern the local and algebraic structure of the block.
Bridging the two aspects is closely related to several conjectures in block theory.

Kessar and Linckelmann conjectured that every block with exactly one irreducible Brauer character is Morita equivalent to a block of a $p$-solvable group \cite{MNS}.
Recall that a Morita equivalence between block algebras is \emph{basic} if it is induced by an endopermutation source bimodule \cite{P0}. Following \cite{P2}, a block is called \emph{inertial} if it is basically Morita equivalent to a block with a normal defect group. Inertiality provides precisely the
kind of description sought in the abelian-defect case of the
Kessar-Linckelmann conjecture.

Blocks with exactly one irreducible Brauer character are not well adapted to passage between
a group and its normal subgroups. This motivates a more general class of
blocks.
Let $p$ be a prime, and let $\mathcal O$ be a complete discrete valuation
ring with field of fractions of characteristic zero and algebraically closed
residue field of characteristic $p$.  We assume that the field of fractions
is sufficiently large for all groups under consideration.  For a block $b$
of a finite group $G$, write $\IBr(b)$ for the set of irreducible Brauer
characters belonging to $b$. Let $\Aut(G)_b$ denote the stabilizer of $b$ in $\Aut(G)$.  We
say that $\IBr(b)$ is a \emph{single $\Aut(G)_b$-orbit} if $\Aut(G)_b$ acts transitively on $\IBr(b)$.

The transitivity of the $\Aut(G)_b$-action on $\IBr(b)$ is weaker
than the condition $|\IBr(b)|=1$, but it still imposes strong
uniformity on the block $b$.  More importantly, it behaves well
under the block correspondences and normal-subgroup reductions that occur in
the proof.
Our main result is an extension theorem for blocks with abelian defect
groups.

\begin{thm}
\label{Main}
Let $\tilde{G}$ be a finite group containing $G$ as a normal subgroup of $p$-power index, let $\tilde{b}$ be a block of $\tilde{G}$ with abelian defect groups, and let $b$ be a block of $G$ covered by $\tilde{b}$. If $\IBr(b)$ is a single $\Aut(G)_b$-orbit, then $\tilde{b}$ is inertial.
\end{thm}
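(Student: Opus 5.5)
We argue by induction on $|\tilde G|$, and within a fixed order on $|\tilde G:G|$, using the standard reductions of Fong--Reynolds, K\"ulshammer--Puig and Dade type. Two facts are used throughout. First, the defect group $D$ of $\tilde b$ is abelian, so $D\cap G$ is a defect group of $b$ and is abelian as well. Second, $|\tilde G:G|$ is a power of $p$, so $\tilde b$ is the unique block of $\tilde G_b$ covering $b$ and $\tilde G_b$ controls everything. By Fong--Reynolds we may therefore assume that $b$ is $\tilde G$-stable. Fong--Reynolds reductions give basic Morita equivalences, and inertiality is preserved under them, so nothing is lost.

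\textbf{Step 1: the case $G=\tilde G$.} Here we must show that a block $b$ with abelian defect and $\IBr(b)$ a single $\Aut(G)_b$-orbit is itself inertial. We take a minimal counterexample and a normal subgroup $N\trianglelefteq G$ that is characteristic, for instance a minimal characteristic subgroup or the generalized Fitting subgroup. Let $c$ be a block of $N$ covered by $b$. Any automorphism of $G$ fixing $b$ permutes the $G$-conjugates of $c$ and the Brauer characters of $N$ lying under $\IBr(b)$. By Clifford theory, the characters of $\IBr(c)$ lying under a fixed $\varphi\in\IBr(b)$ form one orbit, so $\IBr(c)$ is a single $\Aut(N)_c$-orbit, and this condition passes down to $c$.

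If $N$ is a $p'$-group, we use the K\"ulshammer--Puig reduction. This yields a central $p'$-extension and a twisted group algebra, and the orbit condition survives. If $N$ has a component, we reduce to the quasisimple case. There we apply the key ingredient announced in the abstract, that quasisimple blocks with a single automorphism orbit have abelian defect. This gives abelian defect, and then the classification-based knowledge of such blocks (Brou\'e's conjecture and Puig-type results for quasisimple groups) shows they are inertial, or even nilpotent. Once $c$ is known to be inertial, we lift inertiality from $c$ to $b$ using Puig's and Zhou's results on extensions of inertial blocks. These results apply with an abelian defect group and a $p'$-index, or a $p$-index with a stable block.

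\textbf{Step 2: the $p$-power extension.} Now assume $G\ne\tilde G$ with $b$ stable. By Step 1, $b$ is inertial. Choose $G\le H\trianglelefteq\tilde G$ with $|\tilde G:H|=p$, and let $b_H$ be the unique block of $H$ covering $b$. We need $\IBr(b_H)$ to be a single $\Aut(H)_{b_H}$-orbit. Each Brauer character of $b_H$ restricts irreducibly or induces from $b$, since $p$-power index forces $\varphi\in\IBr(b)$ to have a unique extension when invariant (Green). So the $\Aut(H)$-action is controlled by the automorphisms of $G$ induced from $\tilde G$ together with $\Aut(G)_b$. This is delicate: elements of $\Aut(G)_b$ need not extend to $H$. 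The remedy is to strengthen the induction hypothesis: require $\IBr(b)$ to be a single orbit under the group of automorphisms of $G$ stabilizing $b$ that extend to $\tilde G$, or else induct on $|\tilde G:G|$ directly, without passing to $H$.

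To finish, we use Puig's theorem that for a normal subgroup of $p$-power index, $\tilde b$ is inertial as soon as $b$ is inertial and the inertial quotient behaves well. Concretely, let $(D,e)$ be a maximal Brauer pair with inertial quotient $E$ and $Q=D\cap G$. The $\tilde G$-stability of $b$ and the abelianness of $D$ show that $E$ acts faithfully on $Q$ and that $D=Q\cdot(D\cap C)$ for a suitable complement. This yields a basic Morita equivalence between $\tilde b$ and $\mathcal O(D\rtimes E)$ twisted, obtained by gluing the source of $b$ via Dade's extension of the endopermutation module.

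The main obstacle is Step 1 in the quasisimple case together with the lifting through central and normal extensions: proving that the orbit condition forces abelian defect and inertiality for quasisimple groups requires case-by-case Lie-type analysis via Bonnaf\'e--Rouquier and Jordan decomposition. The gluing in Step 2 uses known but technical results of Puig on $p$-extensions of inertial blocks.
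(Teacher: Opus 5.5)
The decisive gap is in Step~2, and it also affects Step~1. You pass from ``$b$ inertial'' to ``$\tilde b$ inertial'' by appealing to a theorem of Puig on $p$-power-index extensions of inertial blocks, but no such general result is available. Without abelian defect groups it is false: at $p=2$, $\mathcal O\mathfrak A_4$ has a normal defect group, while $\mathcal O\mathfrak S_4$ has defect group $D_8$, two simple modules and $\operatorname{Out}(D_8)$ a $2$-group, so it is not inertial. With abelian defect groups, Theorem~\ref{Main} is itself an extension theorem of exactly this kind, obtained only under the orbit hypothesis.

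Your concrete sketch does not fill the hole. It is correct, by coprime action, that the inertial quotient $E$ of $\tilde b$ acts faithfully on $Q=D\cap G$ with $D=C_D(E)Q$. But ``gluing the source of $b$ via Dade's extension'' would require describing the source algebra of $\tilde b$ in terms of that of $b$, which is exactly the open point.

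The paper never separates $G=\tilde G$ from the $p$-extension. It carries $\tilde G$ through the whole minimal-counterexample reduction; Lemmas~\ref{Defect} and~\ref{Normal-one-orbit} transport $\tilde P$ via $G^\tau=G\rtimes\tau(\tilde P)$. It imposes the orbit condition only on $b$, so your difficulty with intermediate $H$ never arises. It solves the $p$-extension problem only for quasisimple $H\trianglelefteq\tilde PH$, where the orbit condition gives much more than inertiality of $c$. By Proposition~\ref{All}, the block of $\tilde{\mathbf G}^F$ (regular embedding) covering $b$ is nilpotent. After embedding $\tilde H$ into $\check A$ (Lemma~\ref{embedding}), $\tilde c$ is covered by a nilpotent block of $HN_{\tilde P\check K}(\tilde P)$, and \cite[Theorem~3.13]{P2} (blocks covered by nilpotent blocks are inertial) applies. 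In the remaining families, either $c$ is nilpotent or $p\nmid|\operatorname{Out}(H)|$ forces $\tilde H=H$.

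Step~1 has further gaps. Even when $G=\tilde G$, passing from a component $X$ to $G$ goes through $PX$, where $P$ may induce outer automorphisms of $X$. So inertiality of the quasisimple block alone is not enough; you need the quasisimple $p$-extension statement just described.

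Moreover, for a characteristic $N$, the quotient $G/N$ is in general neither a $p'$-group nor a $p$-group, so Zhou's $p'$-extension theorem cannot simply be applied. Steps~1--5 of Proposition~\ref{Reduction} exist to make it applicable:
\begin{itemize}
\item the Puig--Zhou structure of extensions of nilpotent blocks, together with minimality of $|G:\mathbf O_{p'}(\mathbf Z(G))|$, gives $\mathbf O_{p'}(G)\le\mathbf Z(G)$;
\item Zhou's theorem and minimality give $G=C_G(Q)\mathbf F^*(G)$, whence $\mathbf O_p(G)\le\mathbf Z(G)$ since $P$ is abelian;
\item non-nilpotence of the component blocks makes every component normal in $\tilde G$;
\item Lemmas~\ref{Solv} and~\ref{Abel} then give $\tilde G/\mathbf E(G)=\mathbf O_{p',p,p'}(\tilde G/\mathbf E(G))$.
\end{itemize}
After this, only \cite[Proposition~2.3]{ZZ1}, \cite[Corollary~1.4]{ZZ} and \cite[Corollary]{Zhou} are needed.

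Finally, Brou\'e's conjecture is not known for quasisimple groups, and it would only give a derived equivalence anyway. The paper gets inertiality there from the nilpotence argument (Jordan decomposition, Bonnaf\'e--Rouquier equivalences, equal degrees and heights). It also treats explicit small cases: cyclic defect of order $3$ or $5$ via Brauer trees, and Klein four defect for ${}^2G_2$.
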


Taking $G=\tilde G$ shows that a block with abelian defect groups is
inertial whenever its irreducible Brauer characters form a single orbit
under its block stabilizer. In particular, \cref{Main} proves the
abelian-defect case of the Kessar--Linckelmann conjecture and Brou\'e's abelian defect group conjecture for such blocks
\cite{Br}.

We outline the proof of \cref{Main}, which proceeds in three stages. First, we reduce \cref{Main} to blocks of quasisimple groups with a single automorphism orbit of irreducible Brauer characters (see Proposition \ref{Reduction} below). Second,
for groups of Lie type in
 {non-defining} characteristic, we combine Jordan decomposition with
 {Bonnaf\'e--Rouquier} equivalences \cite{CE,KM2}, analyze corresponding blocks of Levi subgroups{,} and finally establish Proposition~\ref{All}. The remaining families are treated by their
known block structures. Finally, we combine the quasisimple analysis with the structural
results of \cite{P1} to show that the relevant covered blocks are
inertial, thereby completing the proof of Theorem~\ref{Main}.

The quasisimple analysis also yields a structural result that is useful
independently of \cref{Main}.

\begin{prop}\label{Quasi-one}
Let \(G\) be a finite quasisimple group and \(b\) a block of \(G\).
If \(\IBr(b)\) is a single \(\Aut(G)_{b}\)-orbit,
then defect groups of $b$ are abelian.
\end{prop}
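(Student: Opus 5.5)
The plan is to run through the classification of finite quasisimple groups and treat each family in turn. Throughout, the hypothesis will be used in the following form: $\Aut(G)_b$ acts transitively on $\IBr(b)$, so all members of $\IBr(b)$ have the same degree and the same stabilizer up to conjugacy. Since outer automorphism groups of quasisimple groups are small (and solvable), this forces $|\IBr(b)|\le|\Out(G)_b|$, and in fact to something much more rigid.

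The first reduction is to pass to $G/Z(G)$ where possible. The $p$-part of $Z(G)$ lies in every defect group, and it is central. So if $b$ dominates a block $\bar b$ of $G/O_p(Z(G))$, then $\IBr(b)=\IBr(\bar b)$, and defect groups of $b$ are abelian if those of $\bar b$ are abelian and the extension is controlled. For the controlled extension step I would use the explicit knowledge of Schur multipliers. Blocks of defect zero are trivially fine, and cyclic-defect blocks are fine as well, so from now on I assume noncyclic defect groups.

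\textbf{Sporadic groups, alternating groups and exceptional covers.} For these I would use the known decomposition matrices and block data (Modular Atlas, GAP) to check that every block with noncyclic nonabelian defect has at least two Brauer characters. Within each such block they should not be swapped transitively by the at most order-$2$ outer automorphism group; the typical tool here is distinct degrees. For $\mathfrak A_n$ and $2.\mathfrak A_n$, the $p$-blocks are parametrized by cores and weights $w$. A block of weight $w$ has $k(p-1,w)$ Brauer characters (the number of $(p-1)$-multipartitions of $w$), and the automorphism $\mathfrak S_n$ acts at most with orbits of size $2$. Nonabelian defect requires $w\ge p$, and then $k(p-1,w)>2$. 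The case $p=2$ needs separate treatment via the spin blocks.

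\textbf{Lie type in defining characteristic.} Here blocks of positive defect have full Sylow defect groups, which are nonabelian except in tiny rank-one cases such as $\PSL_2(p^f)$. The irreducible Brauer characters of the principal block are Steinberg-restricted modules, so there are many of them, and the field, diagonal and graph automorphisms cannot act transitively. Counting $q$-restricted weights against $|\Out(G)|$ would settle this.

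\textbf{Lie type in non-defining characteristic.} This is the main obstacle. Write $b$ in $\cE_\ell(G,s)$ for a semisimple $\ell'$-element $s$ of the dual group $\bG^*$. By Bonnafé--Dat--Rouquier I will transfer $b$ via a Morita equivalence preserving defect groups to a block of $N$, where $N$ lies over the Levi $L=C_{\bG^*}(s)^*$ (the dual of the centralizer). The equivalence is compatible with the automorphisms stabilizing the $G^*$-class of $s$, so transitivity descends to an appropriate orbit statement for a unipotent block of $L$. Then I would use the Broué--Malle--Michel / Cabanes--Enguehard description of unipotent $\ell$-blocks via $d$-cuspidal pairs $(M,\lambda)$. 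The unipotent Brauer characters of such a block are roughly parametrized by $\Irr W_G(M,\lambda)$, through unitriangularity of decomposition matrices. Unipotent characters are essentially $\Aut$-invariant, so transitivity forces $|\Irr W_G(M,\lambda)|=1$, i.e. the relative Weyl group is trivial. Then the defect group is contained in $Z(M)_\ell$, which is abelian, apart from the bad-prime and $\ell=2$ exceptions where the defect group is an extension of $Z(M)_\ell$ by a Sylow subgroup of $W_G(M,\lambda)$. That would give the conclusion. The delicate parts will be:
\begin{itemize}
\item quasi-isolated $s$, where one needs Kessar--Malle or Enguehard;
\item disconnected centralizers, where orbits may merge;
\item $\ell=2$ (and $3$ for exceptional types), where I would check the small list by hand.
\end{itemize}
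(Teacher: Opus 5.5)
\textbf{The gap is in the non-defining characteristic case}, which carries essentially all of the content of the statement.

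First, $C_{\bG^*}(s)$ is in general not a Levi subgroup, so there is no Levi subgroup ``$L=C_{\bG^*}(s)^*$''. Bonnaf\'e--Dat--Rouquier (or Bonnaf\'e--Rouquier) only moves $b$ to a Levi subgroup $\mathbf L$ with $\mathbf L^*$ minimal among Levi subgroups containing the centralizer. There $s$ is quasi-isolated in $\mathbf L^*$, and the resulting block is unipotent up to a linear twist only when $s$ is central in $\mathbf L^*$. So your reduction ends exactly at the quasi-isolated blocks that you set aside as ``delicate''.

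Second, from a quasi-isolated block to a unipotent block of $C_{\bG^*}(s)^F$ you only have Jordan decomposition of ordinary characters, not a Morita equivalence. It transports neither $\IBr(b)$, nor the $\Aut(G)_b$-action, nor defect groups. Hence neither ``transitivity descends to a unipotent block'' nor ``the defect group is contained in $\mathbf Z(M)_p$'' is available for $b$ itself.

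Even for unipotent $b$, your step $|\IBr(b)|=|\Irr W_G(M,\lambda)|$ with $\Aut$-invariant Brauer characters needs a unitriangular unipotent basic set. That is not available for bad primes, nor when the block prime divides $|(\mathbf Z(\bG)/\mathbf Z^\circ(\bG))^F|$ (e.g.\ $\mathrm{SL}_n(q)$ with the prime dividing $\gcd(n,q-1)$).

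\textbf{The missing idea is to transport only character degrees}, which Jordan decomposition does control.
\begin{itemize}
\item For good $p$, $\Irr(b)\cap\cE(G,s)$ is $\Aut(G)_b$-stable with linearly independent $p$-regular restrictions. The orbit hypothesis therefore makes it a single orbit, hence of constant degree.
\item The paper then passes to a regular embedding, so that $\mathbf C=C_{\tilde{\mathbf G}^*}(\tilde s)$ is connected, and to the Bonnaf\'e--Rouquier correspondent over $\tilde{\mathbf L}$. The Jordan degree formula and the Kessar--Malle maps $J_t$ (Lemma~\ref{Jordan}, with nontrivial $1$-twins excluded for $E_8$ by Lemma~\ref{Twin}) turn constant degree into equal degrees of the unipotent characters in the relevant unipotent blocks of $\mathbf C^F$. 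Those blocks therefore have central defect (Lemma~\ref{unipotentdegree}).
\item A height comparison then forces every $p$-element $\tilde t$ with $\cE(\tilde{\mathbf L}^F,\tilde s\tilde t)$ meeting the block to be central in $\mathbf C^F$. So all irreducible characters of the block have the same degree, hence height zero, and the Brauer height zero theorem gives abelian defect groups (Lemma~\ref{Good}).
\item Bad primes are handled separately, through quasi-isolated blocks of defect zero (Lemma~\ref{Badquasi}) and the Levi subgroups with components $E_6$, $E_7$, $E_6+A_1$ (Proposition~\ref{All}).
\end{itemize}

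\textbf{Your opening reduction also runs the wrong way.} Abelian defect groups for the block of $G/\mathbf O_p(\mathbf Z(G))$ do not give abelian defect groups for $b$. For example, the principal $2$-block of $\mathrm{SL}_2(q)$, $q\equiv\pm3\pmod 8$, $q>3$, has defect group $Q_8$, while its image in $\PSL_2(q)$ has a Klein four defect group. Schur multipliers do not decide which central extension occurs. The paper only factors out $\operatorname{Ker}(b)$, a central $p'$-group that leaves defect groups unchanged. Otherwise it proves abelianness upstairs in $\mathbf G^F$ with $\mathbf G$ simply connected, from which it descends to $G=\mathbf G^F/Z$ automatically.
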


This is a common generalization of the odd- and even-prime results on
nilpotent blocks of quasisimple groups in
\cite{AE, AE1}. Some cases of Proposition~\ref{Quasi-one} were already established
in \cite{MNS} (see the second paragraph of its proof); Proposition~\ref{All} completes the
remaining quasisimple cases.

Our next application concerns the blockwise Alperin weight conjecture. Recall that a \emph{$p$-weight} of a block $b$ of a finite group $G$ is a pair $(R,{\psi})$, where $R$ is a $p$-subgroup of $G$ such that $\mathbf{O}_p(N_G(R)) = R$ and where
$\psi$ is a {defect-zero}
irreducible character of $N_G(R)/R$ lying in a block $b_R$ of $N_G(R)$
inducing $b$.
The blockwise Alperin weight conjecture predicts that the number of irreducible Brauer characters in the block $b$
is equal to the number of orbits of the natural $G$-conjugation action on $p$-weights of $b$.

For blocks with exactly one irreducible Brauer character, the reduction
leads to blocks of quasisimple groups with a single automorphism orbit of irreducible Brauer characters.
Proposition~\ref{Quasi-one} shows that their defect groups are abelian,
and Theorem~\ref{Main} then implies that they are inertial.
Consequently, \cite{HZ} yields the following result.

\begin{thm}
\label{Main1}
The blockwise Alperin weight conjecture holds for blocks with exactly one irreducible Brauer character.
\end{thm}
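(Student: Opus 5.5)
We prove Theorem~\ref{Main1} by a reduction argument built on Theorem~\ref{Main}, Proposition~\ref{Quasi-one}, and the result of \cite{HZ}. The abstract says \cite{HZ} establishes the blockwise Alperin weight conjecture for inertial blocks, and also reduces the one-Brauer-character case to quasisimple groups.

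We argue by induction on $|G|$ for a block $b$ of $G$ with $|\IBr(b)|=1$. The number of $G$-orbits of $p$-weights of $b$ is always at least one, since $b$ has a weight with a defect group as radical subgroup. So the conjecture amounts to showing there is exactly one weight class. We will not prove this directly; instead we aim to show that $b$ is inertial. Basic Morita equivalences preserve the weight count (as in \cite{HZ}), so the conjecture holds for inertial blocks.

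\textbf{Step 1: reduction to a quasisimple setting.} Let $N\trianglelefteq G$ and let $c$ be a block of $N$ covered by $b$. Since $|\IBr(b)|=1$, Clifford theory shows that $\IBr(c)$ is a single $G$-orbit, hence a single $\Aut(N)_c$-orbit. Apply the Fong reductions together with the Külshammer–Puig reduction for central/normal $p'$-subgroups. Then pass to normal subgroups of $p$-power index, which is exactly the situation of Theorem~\ref{Main}, and to components of $E(G)$. This reduces to $G$ with a normal subgroup $H$, where $H$ is a central product of quasisimple groups and the covered block of $H$ has a single automorphism orbit of Brauer characters. We invoke the version of this reduction from \cite{HZ} (or Proposition~\ref{Reduction}) rather than redoing it.

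\textbf{Step 2: abelian defect.} For each quasisimple component, Proposition~\ref{Quasi-one} gives abelian defect groups for the relevant block. Over a $p$-power index extension containing $H$, the defect groups of $b$ need not be abelian a priori. So we need the output of the reduction to land in the hypothesis of Theorem~\ref{Main}. Here $\tilde G$ is the extension of $p$-power index whose block covers the quasisimple-type block. The abelian defect of $\tilde b$ must come from the reduction together with the Fong–Reynolds and $p'$-index steps (blocks with one Brauer character over a $p'$-extension behave well). Theorem~\ref{Main} then gives that $\tilde b$ is inertial.

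\textbf{Step 3: conclusion.} Inertiality lifts back through the Fong–Reynolds and Külshammer–Puig steps, as these are (basic) Morita equivalences. Combining this with \cite{HZ} yields the weight conjecture for $b$.

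\textbf{Main obstacle.} The hard part is Step 2: ensuring the defect group of the reduced block is abelian, since Proposition~\ref{Quasi-one} only controls the quasisimple part. My first attempt would be to exploit $|\IBr(b)|=1$ to force the $p$-power extension over $H$ to act on the quasisimple defect group in a way that keeps the defect group abelian. If this fails, the fallback is to rely on \cite{HZ}, whose reduction asks only for the quasisimple blocks to be inertial after extension, which is exactly Theorem~\ref{Main}.
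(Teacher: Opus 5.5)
\textbf{There is a genuine gap, and it is exactly your Step~2.} Your intermediate goal, that $b$ is inertial, is false in general, so the plan cannot be repaired along those lines.

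Theorem~\ref{Main} needs the top block to have abelian defect groups. The proof of Proposition~\ref{Reduction} uses that $\tilde P$ is abelian in an essential way (Steps~2, 3 and~5). But a block with one Brauer character need not have abelian defect groups, nor be inertial. Here is an example.
\begin{itemize}
\item Let $p=3$ and $\hat L=V\rtimes\mathrm{SL}_2(3)$ with $V=\mathbb F_3^3$. Here $\mathrm{SL}_2(3)$ acts through $\mathfrak A_4$: $\mathbf O_2(\mathfrak A_4)$ acts by diagonal sign matrices of determinant $1$, and an element of order $3$ permutes the coordinates cyclically.
\item Let $b$ be the block lying over the faithful character of $\mathbf Z(\mathrm{SL}_2(3))=\mathbf O_{3'}(\hat L)$.
\item Since $V=\mathbf O_3(\hat L)$ and $\mathrm{SL}_2(3)$ has a unique faithful $3$-Brauer character, $l(b)=1$.
\item By Fong's theory the defect group of $b$ is a Sylow $3$-subgroup $P\cong C_3\wr C_3$, which is nonabelian.
\item The fusion system of $b$ has $\Aut_{\mathcal F}(V)\cong\mathfrak A_4$, so $b$ is not nilpotent.
\item On the other hand $N_{\hat L}(P)=P\times\mathbf Z(\mathrm{SL}_2(3))$, so the Brauer correspondent of $b$ is nilpotent.
\item Basic Morita equivalences preserve fusion systems, so $b$ is not inertial.
\end{itemize}
Hence no argument can ``force the defect group to stay abelian'', and Step~3 has nothing to transport.

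\textbf{Your fallback is where the proof actually lies, and you have not supplied it.} You attribute to \cite{HZ} a reduction of the one-Brauer-character case to quasisimple groups without stating or checking it. In the paper, \cite{HZ} is used only to get the inductive blockwise Alperin weight condition for one inertial block $f$ of a component. Sp\"ath's reduction \cite{Sp} needs the inductive condition for all blocks of all simple groups involved, so the passage from $f$ to $b$ has to be argued by hand.

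\textbf{The missing idea} is that abelian defect and inertiality are needed only at the level of a component, never for $b$ itself. The paper argues as follows.
\begin{itemize}
\item It takes a minimal counterexample with respect to $(|G:\mathbf O_{p'}(\mathbf Z(G))|,|G|)$.
\item It normalizes: covered blocks are $G$-stable; normal subgroups with a defect-zero covered block lie in $\mathbf O_{p'}(\mathbf Z(G))$; $\mathbf O_p(G)=1$; and $\mathbf E(G)\neq1$.
\item Take a component $X$ with $G$-orbit product $L$, and let $f$ be the covered block of $X$. By Lemma~\ref{Normal-one-orbit2}, $\IBr(f)$ is a single $\Aut(X)_f$-orbit.
\item Hence $f$ has a nontrivial abelian defect group $P_X$ and is inertial (Propositions~\ref{Quasi-one} and~\ref{Pro:Quai-simple}).
\item The block isomorphism of modular character triples that \cite{HZ} provides for $f$ is pushed to $X^r$ via wreath products, deflated to $L$, and restricted to $G_{\bar\theta}$ using \cite{MRR}.
\item This gives $l(b_0)=l(b)=1$ for the Brauer correspondent $b_0$ of $b$ in $M=N_G(P_L)$. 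Since $P_L\neq1=\mathbf O_p(G)$, $M$ is proper, so $\ell(b_0)=1$ by minimality.
\item Lemma~\ref{Vertex} needs only that $P_L=P\cap L$ is abelian. It shows that every $b$-weight has radical subgroup $Q\geq P_L$.
\item Therefore $N_G(Q)\leq M$ and $\operatorname{dz}(N_G(Q)/Q,b)=\operatorname{dz}(N_M(Q)/Q,b_0)$, so $\ell(b)=\ell(b_0)=1$.
\end{itemize}
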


Finally, we apply Theorem~\ref{Main1} to a nilpotency criterion
conjectured by Puig \cite{P-1}. He conjectured that a block
$b$ is nilpotent if for every $b$-Brauer pair
$(Q,e_Q)$, $e_Q$ has exactly one irreducible Brauer character. Watanabe related this criterion to the blockwise Alperin
weight conjecture \cite{Wa}. Combining {Watanabe's} argument with
Theorem~\ref{Main1} and induction on Brauer pairs proves Puig's
conjecture in full generality.

\begin{thm}
\label{Main2} Let $b$ be a block of a finite group $G$. Assume that
$e_Q$ has a unique irreducible Brauer character for every $b$-Brauer pair $(Q,e_Q)$.
Then $b$ is nilpotent.
\end{thm}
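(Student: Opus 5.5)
We prove Theorem~\ref{Main2} by induction on $|G|$, and within a fixed group by downward induction on Brauer pairs ordered by the order of their first component. By the Brauer--Puig characterization, $b$ is nilpotent if and only if, for every $b$-Brauer pair $(Q,e_Q)$, the quotient $N_G(Q,e_Q)/C_G(Q)$ is a $p$-group. So we fix a $b$-Brauer pair $(Q,e_Q)$ and set $H=N_G(Q,e_Q)$. We aim to show that $H/QC_G(Q)$ is a $p$-group, since $QC_G(Q)/C_G(Q)$ is already a $p$-group.

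First we localize. The block $e_Q$ of $C_G(Q)$ is $H$-stable, so it is covered by a unique block $\hat e$ of $H$. For $Q\neq 1$ with $H<G$, every $\hat e$-Brauer pair $(R,f_R)$ is, after conjugation, one whose first component contains $Q$, and $f_R$ corresponds to a $b$-Brauer pair block via the Brauer pair inclusion. Hence the hypothesis of Theorem~\ref{Main2} passes to $\hat e$, and by the inductive hypothesis $\hat e$ is nilpotent. Equivalently, Watanabe's reduction \cite{Wa} lets us assume that all proper Brauer pairs of $b$ have nilpotent local structure. The remaining case is the maximal normalizer situation. We may then assume $Q=1$ is the only problematic level, meaning that $N_G(R,e_R)/C_G(R)$ is a $p$-group for all $R\ne1$ and $b$ is controlled by a fusion system that is trivial away from the trivial subgroup. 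In this case $\mathcal F$ is the fusion system of $D$ on all nontrivial subgroups, and therefore $\mathcal F=\mathcal F_D(D)$ by Alperin's fusion theorem. This already yields nilpotency, because fusion is generated by automizers of nontrivial subgroups.

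The genuinely nontrivial point is that the local automizers $N_G(Q,e_Q)/C_G(Q)$ must themselves be $p$-groups, and Theorem~\ref{Main1} enters here. Consider the block $e_Q$ of $QC_G(Q)$, and the block $\hat e$ of $N_G(Q,e_Q)$ covering it. Since $e_Q$ has a unique Brauer character, Clifford theory together with Theorem~\ref{Main1} applied to the block of $H$ should give the count
\[
|\IBr(\hat e)| = \#\{\text{weights}\} ,
\]
and the number of weights of $\hat e$ with defect-zero part on $H/QC_G(Q)$ equals the number of $p$-blocks of defect zero of a twisted group algebra of $E=H/QC_G(Q)$. When $(Q,e_Q)$ is a maximal pair $(D,e_D)$, $\hat e$ has a single Brauer character by hypothesis applied to the pair $(D,e_D)$ as a Brauer pair of $b$. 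Alperin's weight conjecture, valid here by Theorem~\ref{Main1}, then forces the twisted algebra $k_\alpha E$ to have exactly one weight. Watanabe's argument shows this happens only when $E$ is a $p'$-group with $k_\alpha E$ having a single simple module. Combined with the unique Brauer character of $e_D$ and the fact that $b$ (with defect group $D$) has exactly as many Brauer characters as $\hat e$ does, this forces $E=1$. For non-maximal $Q$ we use induction on $|D:Q|$ in the same way.

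The main obstacle is this weight-counting step. One must carefully identify the weights of $b$ and of $\hat e$ through Brauer pairs, and verify that $|\IBr(b)|=1$ forces the inertial quotient to be trivial rather than merely $k_\alpha E$ being a matrix algebra. I would first try the abelian case: by Proposition~\ref{Quasi-one} and Theorem~\ref{Main}, such blocks are inertial, so $|\IBr(b)|=|\IBr(k_\alpha E)|$. I would then use the weight count for $D\rtimes E$ with the hypothesis at all intermediate pairs to rule out nontrivial $E$.
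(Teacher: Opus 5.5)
There is a genuine gap, and it starts at your localization step. You assert that for $Q\neq1$ every $\hat e$-Brauer pair can be conjugated to one whose first component contains $Q$. From this you conclude that the hypothesis passes to $\hat e$ and that induction makes $\hat e$ nilpotent. This is false. Conjugation inside $H=N_G(Q,e_Q)$ normalizes $Q$, and $(1,\hat e)$, or any $(R,f_R)$ with $Q\not\leq R$, is an $\hat e$-Brauer pair that is not a $b$-Brauer pair, so nothing gives $l(\hat e)=1$. The step is also circular. If $\hat e$ were nilpotent, its Brauer pair $(Q,e_Q)$ would already give $N_H(Q,e_Q)/C_H(Q)=H/C_G(Q)$ a $p$-group, which is the entire content of the theorem and would make your later weight-counting superfluous. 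The localization that does work is to $e_Q$ as a block of $C_G(Q)$. An $e_Q$-Brauer pair $(R,f_R)$ yields the $b$-Brauer pair $(QR,f_R)$, since $C_{C_G(Q)}(R)=C_G(QR)$. So induction on $|G|$ makes $e_Q$ nilpotent when $C_G(Q)<G$; for $Q\leq\mathbf{Z}(G)$ one passes to $G/Q$ and lifts back by \cite[Lemma~2.5]{AE1}. This nilpotency of all $e_Q$ with $Q\neq1$ is what makes every $p'$-subgroup of $H_Q/C_Q$ (with $H_Q=N_G(Q,e_Q)$, $C_Q=C_G(Q)$) act fixed-point-freely on $\mathbf{Z}(Q)\setminus\{1\}$, and you never obtain it.

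The weight-counting also does not go through as written.
\begin{itemize}
\item At a maximal pair, $l(\hat e)=1$ does not come from the hypothesis on $(D,e_D)$; that only says $l(e_D)=1$, which is automatic. It comes from $\ell(b)=l(b)=1$ via Theorem~\ref{Main1}, since $l(\hat e)$ counts the weight classes of $b$ with first component $D$.
\item You flag but leave open the passage from ``$k_\alpha E$ has one simple module'' to $E=1$. It requires the fixed-point-free action above: that makes $E$ a Frobenius complement, which kills the twist and gives $l(\hat e)=k(E)$.
\item For non-maximal $Q$, ``the same way'' is not available. The block $c_Q=\hat e$ is not known to have one Brauer character, so Theorem~\ref{Main1} cannot be applied to it. Without the blockwise Alperin weight conjecture for $c_Q$, you cannot turn the vanishing of $b$-weights at $Q$ into information about $H_Q/C_Q$. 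This is the missing idea.
\end{itemize}

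The paper handles non-maximal $Q$ by downward induction on $|Q|$, in three steps.
\begin{itemize}
\item The fixed-point-free action forces the Sylow structure of $H_Q/C_Q$: cyclic or generalized quaternion Sylow $2$-subgroups when $p$ is odd, and cyclic Sylow $r$-subgroups for odd $r$ when $p=2$.
\item Lemma~\ref{Op-prime} then yields the weight conjecture for $c_Q$. It applies K\"ulshammer--Puig to the nilpotent block $e_Q$ covered by $c_Q$, followed by Sp\"ath's reduction and the known inductive conditions for the few simple groups that can occur.
\item Watanabe's vertex counting gives $l(c_Q)=1$. It uses that $b$ has no weights with first component $Q<P$ and that $H_T/C_T$ is a $p$-group for larger $T$. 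Hence $H_Q/C_Q$ has one $p$-regular class and is a $p$-group.
\end{itemize}
Your fallback to the abelian case via Theorem~\ref{Main} does not repair this, since the defect groups in Theorem~\ref{Main2} need not be abelian.
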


The paper is organized as follows.  Section~2 develops the normal-subgroup
and extension machinery and reduces \cref{Main} to quasisimple groups.
Section~3 establishes the required classification result for quasisimple
groups, with the finite reductive groups treated through Jordan
correspondence and Levi subgroups.  Section~4 completes the proof of
\cref{Main} and derives Proposition~\ref{Quasi-one}.  In Section~5 we prove \cref{Main1} and then
establish \cref{Main2}.

\section{Reduction of Theorem \ref{Main}}

In this section, we reduce the proof of Theorem~\ref{Main} to quasisimple groups (see Proposition~\ref{Reduction} below).

\begin{lem}\label{Fong}
Let $b$ be a block of a finite group $G$, let $N \trianglelefteq G$, and let $c$ be a block of $N$ covered by $b$.
Let $G_c$ denote the stabilizer of $c$ in $G$, and let $f$ be the Fong--Reynolds correspondent of $b$ in $G_c$.
Assume that $N$ is $\Aut(G)_b$-invariant and that $\IBr(b)$ is a single $\Aut(G)_b$-orbit.
Then $\IBr(c)$ is a single $\Aut(N)_c$-orbit and $\IBr(f)$ is a single $\Aut(G_c)_f$-orbit.
\end{lem}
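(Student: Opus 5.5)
We will use Clifford theory for Brauer characters together with the Fong--Reynolds correspondence, and check that each step is equivariant under automorphisms. Let $A=\Aut(G)_b$. Since $N$ is $A$-invariant, restriction gives a homomorphism $A\to\Aut(N)$, $\alpha\mapsto\alpha|_N$, and $A$ permutes the blocks of $N$ covered by $b$. These blocks form a single $G$-orbit (Fong--Reynolds/Clifford), and $G$ acts through inner automorphisms, which fix $b$. Hence every $\alpha\in A$ can be composed with some inner automorphism $\iota_g$ so that $\beta=\iota_g\alpha$ fixes $c$. Let $A_c$ denote the stabilizer of $c$ in $A$. Then $A=\Inn(G)A_c$, and $A_c$ normalizes $G_c$ and fixes $f$, since $f$ is determined by $b$ and $c$.

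\emph{Claim for $c$.} Take $\varphi,\varphi'\in\IBr(c)$. Each lies under some member of $\IBr(b)$: choose $\theta\in\IBr(b)$ over $\varphi$ and $\theta'\in\IBr(b)$ over $\varphi'$. By hypothesis $\theta'=\theta^\alpha$ for some $\alpha\in A$, and we may take $\alpha\in A_c$ because inner automorphisms fix $\theta$. The constituents of $\theta|_N$ lying in $c$ form a single $G_c$-orbit, by Clifford theory combined with Fong--Reynolds. Now $\alpha$ maps $\theta|_N$ to $\theta'|_N$ and preserves $c$, so it maps the $c$-constituents of $\theta|_N$ onto those of $\theta'|_N$. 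Thus $\varphi^\alpha$ is $G_c$-conjugate to $\varphi'$. Hence $\varphi'=\varphi^{\gamma}$ for some $\gamma\in\Inn(G_c)A_c$, and the restriction of $\gamma$ to $N$ lies in $\Aut(N)_c$.

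\emph{Claim for $f$.} Induction $\mathrm{Ind}_{G_c}^G$ is a bijection $\IBr(f)\to\IBr(b)$. Elements of $A_c$ commute with this bijection, in the sense that $(\mathrm{Ind}\,\psi)^\alpha=\mathrm{Ind}(\psi^{\alpha|_{G_c}})$, and they preserve $f$. Given $\psi,\psi'\in\IBr(f)$, write $\mathrm{Ind}\,\psi'=(\mathrm{Ind}\,\psi)^\alpha$ with $\alpha\in A$. Adjusting $\alpha$ by an inner automorphism as above gives $\alpha\in A_c$. Then $\psi^{\alpha|_{G_c}}\in\IBr(f)$ and it induces to $\mathrm{Ind}\,\psi'$, so injectivity of the bijection gives $\psi'=\psi^{\alpha|_{G_c}}$. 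Finally, $\alpha|_{G_c}\in\Aut(G_c)_f$.

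\emph{Main obstacle.} The delicate step is the adjustment by inner automorphisms. It must be done so that the modified automorphism still sends $\theta$ to $\theta'$ and also fixes $c$. This works because $\theta^{\iota_g}=\theta$, and because $G$ is transitive on the blocks of $N$ covered by $b$. We must also check that conjugation by $g\in G_c$ induces an automorphism of $G_c$ fixing $f$. This holds since $f$ is the unique block of $G_c$ covering $c$ that induces $b$.
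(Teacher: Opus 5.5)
Your proposal is correct and follows essentially the paper's argument. For $\IBr(c)$, both correct an $\alpha\in\Aut(G)_b$ by an inner automorphism via Clifford's theorem. For $\IBr(f)$, both first adjust $\alpha$ so that it fixes $c$, using $G$-conjugacy of the blocks of $N$ covered by $b$, and then use injectivity of Fong--Reynolds induction $\IBr(f)\to\IBr(b)$ together with uniqueness of $f$. Your factorization $\Aut(G)_b=\Inn(G)\,(\Aut(G)_b)_c$ simply makes these adjustments once in advance.
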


\begin{proof}
Let $\varphi_1, \varphi_2 \in \IBr(c)$ and choose $\chi_i \in \IBr(b)$ lying over $\varphi_i$.
Since $\IBr(b)$ is a single $\Aut(G)_b$-orbit, there exists $\alpha \in \Aut(G)_b$ such that $\chi_1^\alpha = \chi_2$.
As $N$ is $\Aut(G)_b$-invariant, $\alpha(N) = N$ and hence $\varphi_1^\alpha$ and $\varphi_2$ both lie below $\chi_2$.
By \cite[Corollary 8.7]{N1}, there exists an inner automorphism $\tau$ of $G$ such that $\varphi_1^{\alpha\circ\tau} = \varphi_2$.
Hence the restriction of $\alpha\circ\tau$ to $N$ lies in $\Aut(N)_c$ and $\IBr(c)$ is a single $\Aut(N)_c$-orbit.

Now let $\varphi_1, \varphi_2 \in \IBr(f)$. By \cite[Theorem 9.14]{N1}, the induced characters $\chi_i = \varphi_i^G$ belong to $\IBr(b)$.
Since $\IBr(b)$ is a single $\Aut(G)_b$-orbit, there exists $\alpha \in \Aut(G)_b$ with $\chi_1^\alpha = \chi_2$.
By \cite[Corollary 9.3]{N1}, there exists an inner automorphism $\tau$ of $G$ such that $c^{\alpha\circ\tau} = c$, and therefore $G_c^{\alpha\circ\tau} = G_c$.
Applying \cite[Theorem 9.14]{N1} again yields $\varphi_1^{\alpha\circ\tau} = \varphi_2$, so the restriction of $\alpha\circ\tau$ to $G_c$ belongs to $\Aut(G_c)_f$ and $\IBr(f)$ is a single $\Aut(G_c)_f$-orbit.
\end{proof}

\begin{lem}\label{Normal-P-cap-N}
Let $b$ be a block of a finite group $G$ with defect group $P$ and let $N \trianglelefteq G$ be $\Aut(G)_b$-invariant.
Let $c$ be a $G$-invariant block of $N$ covered by $b$ and set $D = P \cap N$.
Then the subgroup $C_G(D)N$ is $\Aut(G)_b$-invariant.
\end{lem}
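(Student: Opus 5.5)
The plan is to show that for every $\alpha\in\Aut(G)_b$, the image $\alpha(C_G(D)N)$ equals $C_{G}(D')N$ for some $G$-conjugate $D'$ of $D$. Since $C_G(D')N={}^gC_G(D)N$ for $D'={}^gD$, and $N\trianglelefteq G$ gives ${}^g(C_G(D)N)=C_G({}^gD)N$, it then suffices to prove that $C_G(D)N$ is normal in $G$. In that case its image under $\alpha$ is $C_G(D')N=C_G(D)N$.

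First I apply $\alpha$ directly. Because $\alpha$ fixes $b$, it sends a defect group $P$ of $b$ to another defect group $\alpha(P)$ of $b$, so $\alpha(P)={}^gP$ for some $g\in G$. Because $\alpha(N)=N$ by hypothesis, we get
\[
\alpha(D)=\alpha(P)\cap N={}^gP\cap N={}^gD .
\]
Therefore $\alpha(C_G(D)N)=C_G(\alpha(D))\,\alpha(N)=C_G({}^gD)N={}^g\bigl(C_G(D)N\bigr)$. It remains to show that $C_G(D)N$ is normal in $G$.

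Next I use that $c$ is $G$-invariant and covered by $b$. By the standard covering result (Kn\"orr; see, e.g., \cite[Theorem 9.26]{N1}), $D=P\cap N$ is then a defect group of $c$. For any $x\in G$, the subgroup ${}^xD={}^xP\cap N$ is again a defect group of $c$, because $c$ is $G$-invariant. Defect groups of $c$ are $N$-conjugate, so ${}^xD={}^nD$ for some $n\in N$. Hence $n^{-1}x\in N_G(D)$, and this gives the Frattini-type factorization $G=N\,N_G(D)$.

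Finally, $N_G(D)$ normalizes $C_G(D)$, and $N$ is normal, so $N_G(D)$ normalizes $C_G(D)N$. The subgroup $N$ also normalizes $C_G(D)N$, since it is contained in it. Combining these, $G=N\,N_G(D)$ normalizes $C_G(D)N$, which finishes the argument. The only nontrivial input is that $P\cap N$ is a defect group of the covered block $c$, which I am quoting as a standard result; everything else is bookkeeping. The $G$-invariance of $c$ is used exactly to make all $G$-conjugates of $D$ into defect groups of $c$.
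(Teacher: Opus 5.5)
Your proof is correct and is essentially the paper's argument: both rest on $D=P\cap N$ being a defect group of the unique covered block $c$ (\cite[Theorem 9.26]{N1}) and on $N$-conjugacy of the defect groups of $c$, which together give $\alpha(D)={}^nD$ for some $n\in N$. The only difference is bookkeeping. The paper gets $\alpha(D)$ as a defect group of $c$ directly from the $\Aut(G)_b$-invariance of $c$, whereas you first write $\alpha(D)={}^gD$ using $G$-conjugacy of the defect groups of $b$, and then absorb $g$ via the Frattini factorization $G=N\,N_G(D)$.
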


\begin{proof}
The block $c$ is the unique block of \(N\) covered by \(b\) since it is \(G\)-invariant.
By \cite[Theorem 9.26]{N1}, \(D\) is a defect group of \(c\).
The uniqueness implies that \(c\) is \(\Aut(G)_b\)-invariant, so
\(D^\alpha\) is a defect group of \(c\) for any \(\alpha \in \Aut(G)_b\).
Since \(N\) acts transitively on the set of defect groups of \(c\),
there exists \(n \in N\) such that \( D^\alpha = D^n\). Consequently we have
$C_G(D)^\alpha= C_G(D^\alpha) = C_G(D^n)=C_G(D)^n$ and therefore
$
(C_G(D)N)^\alpha = C_G(D)^\alpha N = C_G(D)^nN = C_G(D)N
$.
\end{proof}

\begin{lem}\label{Abel}
Let \(b\) be a block of a finite group \(G\) with a defect group \(P\) and let \(N\) be a characteristic subgroup of \(G\).
Assume that all blocks of all characteristic subgroups of \(G\) containing \(N\) covered by \(b\) are \(G\)-invariant and that the quotient group \(G/N\) is \(p\)-solvable.
Then {the subgroup} \(PN/N\) is a Sylow \(p\)-subgroup of \(G/N\).
\end{lem}

\begin{proof}
We proceed by an argument analogous to that in the proof of \cite[Lemma 2.4]{Ar}.
Take the full inverse images in $G$ of the terms of the upper $p$-series of
$G/N$
\[
  N=N_0\leq N_1\leq\cdots\leq N_t=G.
\]
The subgroups $N_i$ are characteristic in $G$, and each factor
$N_{i+1}/N_i$ is either a $p$-group or a $p'$-group.
For each \(i\), let \(b_i\) be a block of \(N_i\) covered by \(b\) and set $P_i = P \cap N_i$. By \cite[Theorem 9.26]{N1},
\(P_i = P \cap N_i\) is a defect group of \(b_i\).
If \(|N_{i+1}:N_i|\) is coprime to \(p\), then \(P_i=P_{i+1}\).
If \(|N_{i+1}:N_i|\) is a power of \(p\), then the invariance hypothesis for $b_i$ and
\cite[Theorem 9.17]{N1} yield \(|P_{i+1} : P_i| = |N_{i+1} : N_i|\).
Consequently, multiplication along the series shows
$
|PN/N| = |G/N|_p.
$
\end{proof}

We denote by $\mathbf{Z}(G)$ the center of $G$, by $\mathbf{F}(G)$ the Fitting subgroup of \(G\), by \(\mathbf{E}(G)\)
the layer of \(G\) and by \(\mathbf{F}^{*}(G)\) the generalized Fitting subgroup of $G$.

\begin{lem}\label{Solv}
Assume that \(\mathbf{F}^*(G) = \mathbf{Z}(G)\mathbf{E}(G)\) and that all components of \(G\) are normal. Then \(G/\mathbf{F}^*(G)\) is solvable.
\end{lem}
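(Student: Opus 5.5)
The plan is to embed $G/\mathbf{F}^*(G)$ into a direct product of outer automorphism groups of finite simple groups and then invoke the Schreier conjecture. The standard starting point is that $C_G(\mathbf{F}^*(G))\le \mathbf{F}^*(G)$; more precisely, $C_G(\mathbf{F}^*(G))=\mathbf{Z}(\mathbf{F}^*(G))$. Since $\mathbf{F}^*(G)=\mathbf{Z}(G)\mathbf{E}(G)$, we have $\mathbf{F}(G)=\mathbf{Z}(G)\,\mathbf{Z}(\mathbf{E}(G))$. Hence $C_G(\mathbf{E}(G))$ centralizes $\mathbf{Z}(G)\mathbf{E}(G)=\mathbf{F}^*(G)$, so $C_G(\mathbf{E}(G))\le \mathbf{Z}(\mathbf{F}^*(G))\le \mathbf{F}^*(G)$.

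Let $L_1,\dots,L_r$ be the components of $G$, so that $\mathbf{E}(G)=L_1\cdots L_r$ is a central product. Each $L_i$ is normal in $G$ by hypothesis, so conjugation gives a homomorphism
\[
\theta\colon G\to \prod_{i=1}^r \Aut(L_i).
\]
Its kernel is $\bigcap_i C_G(L_i)=C_G(\mathbf{E}(G))\le \mathbf{F}^*(G)$.

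Next we compose $\theta$ with the maps $\Aut(L_i)\to \Aut(L_i/\mathbf{Z}(L_i))$. Set $S_i=L_i/\mathbf{Z}(L_i)$, a nonabelian simple group. Since $L_i$ is perfect, an automorphism of $L_i$ acting trivially on $S_i$ is trivial (three subgroups lemma). So $\Aut(L_i)$ embeds in $\Aut(S_i)$, and we get a homomorphism $G\to\prod_i\Aut(S_i)$ with the same kernel $C_G(\mathbf{E}(G))$. Under this map, $\mathbf{E}(G)$ is sent onto $\prod_i\Inn(S_i)$: the image of $L_i$ acts on $S_i$ as $\Inn(S_i)$ and trivially on $S_j$ for $j\ne i$, because distinct components commute.

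It follows that $G/\mathbf{E}(G)C_G(\mathbf{E}(G))$ embeds in $\prod_i \operatorname{Out}(S_i)$. Moreover $\mathbf{E}(G)C_G(\mathbf{E}(G))\le \mathbf{F}^*(G)$, and in fact equality holds, since $\mathbf{Z}(G)\le C_G(\mathbf{E}(G))$. Therefore $G/\mathbf{F}^*(G)$ is isomorphic to a subgroup of $\prod_i\operatorname{Out}(S_i)$.

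By the Schreier conjecture, which is a consequence of the classification of finite simple groups, each $\operatorname{Out}(S_i)$ is solvable. Subgroups of solvable groups are solvable, so $G/\mathbf{F}^*(G)$ is solvable.

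The only step needing care is the kernel computation, namely that $C_G(\mathbf{E}(G))\le\mathbf{F}^*(G)$. This uses the specific hypothesis $\mathbf{F}^*(G)=\mathbf{Z}(G)\mathbf{E}(G)$ together with $C_G(\mathbf{F}^*(G))\le\mathbf{F}^*(G)$. Beyond that, the argument rests on citing the Schreier conjecture.
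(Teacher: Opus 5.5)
Your proof is correct and follows the paper's argument. Conjugation gives $G\to\prod_i\Aut(X_i)$ with kernel $C_G(\mathbf{E}(G))=C_G(\mathbf{F}^*(G))\le\mathbf{F}^*(G)$, the image of $\mathbf{F}^*(G)$ is the product of the inner automorphism groups, and the Schreier conjecture finishes the proof; your passage from $\Aut(L_i)$ to $\Aut(S_i)$ via perfectness of $L_i$ makes explicit a step the paper leaves implicit when it asserts that $\mathrm{Out}(X_i)$ is solvable for quasisimple $X_i$.
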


\begin{proof} Let $X_1,\dots,X_r$ be the components of $G$. The $G$-conjugation induces a group homomorphism
$\pi: G\rightarrow \prod_{i=1}^r \Aut(X_i)$. Since \(\mathbf{F}^*(G) = \mathbf{Z}(G)\mathbf{E}(G)\),
the kernel of $\pi$ is $C_G(\mathbf{F}^*(G))$. Moreover, $\pi$ maps $\mathbf{F}^*(G)$ onto
$\prod_{i=1}^r \mathrm{Inn}(X_i)$ and therefore $\pi$ induces an injective homomorphism
$G/\mathbf{F}^*(G) \rightarrow \prod_{i=1}^r \mathrm{Out}(X_i)$.
By the classification of finite simple groups, each $\mathrm{Out}(X_i)$ is solvable and hence $G/\mathbf{F}^*(G)$ is solvable.
\end{proof}

\begin{lem}\label{Normal-one-orbit2}
Let \(G\) be a finite group and \(b\) a block of \(G\).
Assume that \(\IBr(b)\) is a single \(\Aut(G)_b\)-orbit.
Then for every component \(X\) of \(G\) and every block \(f\) of \(X\) covered by a block of \(\mathbf{E}(G)\) covered by \(b\), \(\IBr(f)\) is a single \(\Aut(X)_f\)-orbit.
\end{lem}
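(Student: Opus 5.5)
We apply Lemma~\ref{Fong} twice: first with $N=\mathbf{E}(G)$, then with $N=X$ inside $\mathbf{E}(G)$.

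\emph{Step 1.} The layer $\mathbf{E}(G)$ is characteristic in $G$, so it is invariant under $\Aut(G)_b$. Let $e$ be a block of $\mathbf{E}(G)$ covered by $b$ such that $f$ is covered by $e$. The first assertion of Lemma~\ref{Fong}, applied with $N=\mathbf{E}(G)$, shows that $\IBr(e)$ is a single $\Aut(\mathbf{E}(G))_e$-orbit. Any block of $\mathbf{E}(G)$ covered by $b$ is $G$-conjugate to $e$. So we may work with $e$, after replacing $f$ by a $G$-conjugate; this does not affect the conclusion, since conjugation by $g\in G$ transports $\Aut(X)_f$-orbits to $\Aut(X^g)_{f^g}$-orbits.

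\emph{Step 2.} Now apply the first part of Lemma~\ref{Fong} to the group $\mathbf{E}(G)$, its block $e$, and the normal subgroup $X$. The hypothesis of the lemma requires $X$ to be invariant under $\Aut(\mathbf{E}(G))_e$. This is the main obstacle, because an automorphism of $\mathbf{E}(G)$ may permute the components. I would get around it by replacing $X$ with the product $Y$ of all components of $\mathbf{E}(G)$ in the $\Aut(\mathbf{E}(G))_e$-orbit of $X$. Then $Y$ is a central product $X_1\cdots X_m$ and is normal and $\Aut(\mathbf{E}(G))_e$-invariant. Lemma~\ref{Fong} then shows that the block $e_Y$ of $Y$ covered by $e$ has $\IBr(e_Y)$ a single $\Aut(Y)_{e_Y}$-orbit.

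\emph{Step 3.} It remains to pass from $Y$ to the single factor $X$. Since $Y$ is a central product of the $X_i$, blocks of $Y$ are dominated by tensor products $f_1\otimes\cdots\otimes f_m$, and $\IBr(e_Y)$ consists of products $\varphi_1\cdots\varphi_m$ with $\varphi_i\in\IBr(f_i)$. Take $\varphi,\varphi'\in\IBr(f)$, where $f=f_1$ is a block of $X=X_1$. Extend both by the same characters $\varphi_2,\dots,\varphi_m$ on the other factors. Transitivity gives $\alpha\in\Aut(Y)_{e_Y}$ mapping $\varphi\varphi_2\cdots\varphi_m$ to $\varphi'\varphi_2\cdots\varphi_m$.

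The automorphism $\alpha$ permutes the components of $Y$. Consider the component $X_{\pi(1)}=\alpha(X)$. Restricting the two characters to $X$ and to $\alpha(X)$ compares the factor $\varphi$ with $\varphi'$. If $\alpha(X)\neq X$, then $\varphi'$ restricted to $X$ corresponds under $\alpha^{-1}$ to the factor on another component. In that case I would choose the auxiliary factors $\varphi_j$ in a way that forces $\alpha(X)=X$. Alternatively, I would compose $\alpha$ with the automorphism of $Y$ that swaps isomorphic components, using the fact that all $f_i$ are conjugate via $\alpha$-type maps. This yields an automorphism fixing $X$ and $f$ and sending $\varphi$ to $\varphi'$. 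Its restriction to $X$ lies in $\Aut(X)_f$.

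Handling this permutation of components carefully is the main technical point. The rest is direct bookkeeping with Lemma~\ref{Fong} and the central product structure of $\mathbf{E}(G)$ (noting that $\mathbf{Z}(X)$ acts trivially on Brauer characters up to central characters, which are fixed within the block).
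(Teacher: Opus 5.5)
Your Steps 1 and 2 are exactly the paper's first two applications of Lemma~\ref{Fong}: first to $\mathbf{E}(G)$, then to the product $Y$ of the $\Aut(\mathbf{E}(G))_e$-orbit of $X$. The gap is Step 3. It is the real content of the lemma, and you describe the difficulty rather than resolve it.

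Transitivity on $\IBr(e_Y)$ gives you \emph{some} $\alpha$ with, say, $\Phi\circ\alpha=\Phi'$, where $\Phi=\varphi\varphi_2\cdots\varphi_m$ and $\Phi'=\varphi'\varphi_2\cdots\varphi_m$. Nothing in the choice of the $\varphi_j$ controls the permutation $\sigma$ that $\alpha$ induces on $X_1,\dots,X_m$. So ``choose the auxiliary factors to force $\alpha(X)=X$'' is not an argument. Your second remedy presupposes an automorphism of the central product $Y$ that interchanges components and stabilizes $e_Y$. You do not construct it, and you do not say which auxiliary factors would make the composite carry $\varphi$ to $\varphi'$.

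What is missing is a way to turn an $\alpha$ that moves $X$ into an automorphism of $X$. The paper does this with diagonal characters:
\begin{itemize}
\item it identifies every $X_i$ with $X$ via fixed $\alpha_i\in\Aut(\mathbf{E}(G))_e$, so the block of $X^r$ dominating $e_Y$ is $f\otimes\cdots\otimes f$;
\item it passes to the universal cover $\tilde X$, so a connecting automorphism can be written $\gamma=(\gamma_1,\dots,\gamma_r)\sigma$;
\item it compares $\tilde\varphi^{\otimes r}$ with $(\tilde\varphi')^{\otimes r}$. Since all tensor factors are equal, every $\gamma_i$ carries $\tilde\varphi$ to $\tilde\varphi'$, whatever $\sigma$ is.
\end{itemize}

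Your non-diagonal setup can also be completed directly by following the cycle of $\sigma$ through $1$. Write $\psi_j,\psi'_j$ for the factors of $\Phi,\Phi'$ on $X_j$. Comparing restrictions to $X_j$ and using linear independence of $\IBr(X_j)$ gives $\psi'_j=\psi_{\sigma(j)}\circ\alpha|_{X_j}$ for all $j$. Since $\psi_j=\psi'_j$ for $j\neq 1$, iterating along $1,\sigma(1),\dots,\sigma^{n-1}(1)$ yields $\varphi'=\varphi\circ\alpha^n|_X$, where $n$ is the length of that cycle. Then $\alpha^n|_X\in\Aut(X)$ lies in $\Aut(X)_f$, because it carries $\varphi\in\IBr(f)$ to $\varphi'\in\IBr(f)$. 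This version avoids the universal cover and the wreath-product description of $\Aut(\tilde X^r)$. Either way, some such argument must be supplied for the proof to be complete.
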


\begin{proof}
Let \(d\) be a block of \(\mathbf{E}(G)\) covered by \(b\) and covering \(f\).  By Lemma \ref{Fong}, \(\IBr(d)\) is a single \(\Aut(\mathbf{E}(G))_d\)-orbit. Let \(\{X_1, \cdots, X_r\}\) be the orbit of the natural action of $\Aut(\mathbf{E}(G))_d$ on the components of $G$ such that $X_1=X$.
Set \(L=X_1X_2 \cdots X_r\) and
let \(d_L\) be a block of \(L\) covered by \(d\).
Lemma \ref{Fong} shows that \(\IBr(d_L)\) is a single \(\Aut(L)_{d_L}\)-orbit.

For each $i$, choose
$\alpha_i\in\Aut(\mathbf E(G))_d$ such that
$\alpha_1=1$ and $X_i=\alpha_i(X)$.
Since $X_1, \ldots, X_r$ {centralize one another}, the block $d_L$ of $L$ covered by $d$
is unique. Hence $d_L$ is stabilized by every $\alpha_i$ and
$f_i=f^{\alpha_i}$ is the block of $X_i$ covered by $d_L$. Denote by $X^r$ the direct product of $r$ copies of $X$ and
define $\pi:X^r\rightarrow L, (x_1,\ldots,x_r)\mapsto \alpha_1(x_1)\cdots\alpha_r(x_r)$.
Since distinct components commute and intersect centrally,
$\pi$ is surjective and $\ker(\pi)\leq Z(X^r)$. Let $e$ be the unique block of $X^r$ dominating $d_L$.

There is an $\mathcal O$-algebra isomorphism
\(
 \mathcal OX^r\cong\bigotimes_{i=1}^r\mathcal OX
\)
mapping $(x_1,\ldots,x_r)$ to $x_1\otimes\cdots\otimes x_r$. Identifying $\mathcal OX^r$ and $\bigotimes_{i=1}^r\mathcal OX$, we can write $e=e_1\otimes\cdots\otimes e_r$, where each $e_i$ is a block of $X$.
We denote by $\pi_i$ the homomorphism $X\rightarrow X^r$ mapping $x$ to the element of $X^r$ whose $i^{\rm th}$-component is $x$ and whose other components are $1$. The block $e_i$ is the block of the $i$-th direct factor corresponding to $e$.

We claim that each $e_i$ is equal to $f$. The composition $\pi\pi_i$ is the injective homomorphism $X\rightarrow L$, $x\mapsto\alpha_i(x)$. The block $e_i$ corresponds to the block $e_i^{\alpha_i}$ of $X_i=\alpha_i(X)$, and $d_L$ covers $e_i^{\alpha_i}$. Hence $d$ covers $e_i^{\alpha_i}$. Since $d$ also covers $f_i$ and the distinct components commute, we have $e_i^{\alpha_i}=f_i$.
Since $f^{\alpha_i}=f_i$, we have $e_i=f$. This proves the claim.

Let $\nu: \tilde X\to X$ be the universal central extension and
let $\tilde f$ be the unique block of $\tilde X$
dominating $f$. Then $\tilde e=\tilde f\otimes \tilde  f\otimes \cdots\otimes \tilde f$
is the unique block of $\tilde X^r$ dominating $d_L$.
Let $\rho$ be the homomorphism
$\tilde X^r\rightarrow L,
 (\tilde x_1,\ldots,\tilde x_r)\mapsto
 \alpha_1(\nu(\tilde x_1))\cdots
 \alpha_r(\nu(\tilde x_r))
$ and set $M=\ker(\rho)$.  By \cite[Lemma B.8]{N2},
$\Aut(L)_{d_L}$ lifts to the subgroup
\[
 A=\Aut(\tilde X^r)_{\tilde e,M}
   =\{\gamma\in\Aut(\tilde X^r)\mid
       \tilde e^\gamma=\tilde e,\ \gamma(M)=M\}.
\]
Since \(\IBr(d_L)\) is a single \(\Aut(L)_{d_L}\)-orbit,  \cite[Corollary B.8]{N2} yields that \(\IBr(\tilde e)\) is a single \(A\)-orbit.

Let $\varphi,\psi\in\IBr(f)$ and denote their inflations to
$\tilde X$ by $\tilde\varphi$ and
$\tilde\psi$ respectively. Then $\tilde\varphi$ and
$\tilde\psi$ {lie} in the block $\tilde f$. Denote by $\tilde\varphi^{\otimes r}$ the tensor product of $r$ copies of $\tilde\varphi$, which is an irreducible Brauer character of $\tilde X^r$ lying in the block {$\tilde e$}.
There exists $\gamma\in A$
such that $(\tilde\varphi^{\otimes r})^\gamma=\tilde\psi^{\otimes r}$.
By \cite[Lemma 10.24]{N2}, write
$\gamma=(\gamma_1,\ldots,\gamma_r)\sigma$ with
$\gamma_i\in\Aut(\tilde X)$ and
$\sigma\in\mathfrak S_r$, where \(\mathfrak{S}_r\) is the symmetric group of degree \(r\).

If $U_i$ denotes the $i$-th
direct factor of $\tilde X^r$, then the $i$-th projection $\tilde X^r\rightarrow X$ induces an isomorphism $
 M\cap U_i\cong \ker(\nu)$.
Since $\gamma(M)=M$ and $\gamma(U_i)=U_{\sigma(i)}$, each
$\gamma_i$ stabilizes $\ker(\nu)$ and hence by
\cite[Lemma B.8]{N2}, induces an automorphism
$\overline{\gamma}_i\in\Aut(X)$.  Moreover,
$\gamma\in\Aut(\tilde X^r)_{\tilde e}$ implies
$\gamma_i\in\Aut(\tilde X)_{\tilde f}$, and therefore
$\overline{\gamma}_i\in\Aut(X)_f$.  Comparing tensor factors
gives
$\tilde\varphi^{\gamma_i}=\tilde\psi$ for some $i$ and hence
$
 \varphi^{\overline{\gamma}_i}=\psi$.
Thus $\IBr(f)$ is a single $\Aut(X)_f$-orbit.
\end{proof}

Let
$G\trianglelefteq\tilde G$ with $\tilde G/G$ a $p$-group and let $b$ be a $\tilde G$-invariant block of $G$. Note that the induced block $b^{\tilde G}$ of $b$ in $\tilde G$ exists and is equal to the block $b$.
Choose defect groups $P$ of $b$ and $\tilde P$ of $b^{\tilde G}$ so that
$\tilde P\cap G=P$. We have
\(\tilde{G}=\tilde{P}G\). The $\tilde G$-conjugation induces a homomorphism
\(\tau \colon \tilde{G} \to \Aut(G)\).
We define the {semidirect} products
$G^\tau = G \rtimes \tau(\tilde{P})$ and $P^\tau= P \rtimes \tau(\tilde{P})$.

\begin{lem}\label{Defect}
With the above notation, \(P^\tau\) is a defect group of the induced block $b^{G^\tau}$. Moreover, if \(\tilde {P}\) is abelian, then the block \(b^{\tilde G}\) is inertial if and only if the block \(b^{G^\tau}\) is inertial.
\end{lem}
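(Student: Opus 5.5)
The plan is to compare the two extensions of $G$ through the natural map $\tilde G\to G^\tau$. Concretely, consider the group $\tilde G\times \tau(\tilde P)$, or more economically the fibre-product-type construction below, and reduce both statements to the general theory of blocks of $p$-power index extensions in which $b$ is invariant.

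\textbf{Step 1: the defect group.} Since $b$ is $\tilde G$-invariant, it is $\tau(\tilde P)$-invariant as a block of $G\trianglelefteq G^\tau$. The index $|G^\tau:G|=|\tau(\tilde P)|$ is a power of $p$, so $b^{G^\tau}=b$ is the unique block covering $b$. I would use Green's/Dade's description (e.g.\ \cite[Theorem 9.17]{N1} together with \cite[Theorem 9.26]{N1}): a defect group $Q$ of $b^{G^\tau}$ satisfies $Q\cap G\sim_G P$ and $QG=G^\tau$, hence $|Q|=|P|\,|\tau(\tilde P)|$. It then remains to show that $P^\tau$ itself is a defect group, not merely a subgroup of the right order. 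For this I would take a maximal $b$-Brauer pair $(P,e_P)$ and argue that $\tilde P$ stabilizes it. Indeed, $(\tilde P,\tilde e)$ is a maximal Brauer pair of $b^{\tilde G}$ with $\tilde P\cap G=P$, and restricting gives a $b$-Brauer pair $(P,e_P)$ normalized by $\tilde P$ (Alperin--Brou\'e). Consequently $\tau(\tilde P)$ normalizes $(P,e_P)$. Moreover $P$ has the correct order and $P^\tau$ is a $p$-subgroup of $N_{G^\tau}(P,e_P)$ covering $G^\tau/G$. A standard counting argument (Kn\"orr/Dade: the defect group of the covering block of $N_{G^\tau}(P,e_P)$ is a Sylow $p$-subgroup of $N_{G^\tau}(P,e_P)$ modulo $C$-part) then shows that $P^\tau$ is a defect group.

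\textbf{Step 2: inertiality.} The key observation is that $\tilde G$ and $G^\tau$ are both quotients of $H=G\rtimes\tilde P$ (with $\tilde P$ acting via conjugation). The quotient $H\to\tilde G$, $(g,x)\mapsto gx$, has kernel $\{(x^{-1},x):x\in P\}\cong P$, and the quotient $H\to G^\tau$ has kernel $1\times C_{\tilde P}(G)$, a central $p$-subgroup. I would work with the common block $b^H$, whose defect group is $P\rtimes\tilde P$ by the argument of Step 1. Since $\tilde P$ is abelian, both kernels are normal $p$-subgroups contained in the defect group. For the quotient by a central $p$-subgroup $Z$ I would invoke Puig's result that a block is inertial iff its image in $H/Z$ is inertial (inertiality is preserved by central $p$-quotients and lifts through them, by \cite{P2}). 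The kernel $\Delta P$ is not central. Here I would use abelianness of $\tilde P$: then $P$ is abelian and $\Delta P$ is centralized by $\tilde P$ and by $C_G(P)$, and I would try to replace $H$ by $H\cdot$ appropriate factorization so that the kernel becomes central in the relevant local subgroup, or use the fusion description. Indeed, for abelian defect, inertial means $b$ is basically Morita equivalent to its Brauer correspondent and the inertial quotient is a $p'$-group. I would compare inertial quotients $N(\tilde P,\tilde e)/\tilde PC(\tilde P)$ in $\tilde G$ and $G^\tau$ through $H$.

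\textbf{Main obstacle.} I expect the hardest step to be the last comparison: transferring a basic Morita equivalence across the non-central kernel $\Delta P$. My first attempt would be to use Puig's theorem \cite{P1} that, for $p$-power index extensions with $b$ invariant, inertiality of the covering block is controlled by the source algebra of $b$ together with the action of the defect group on it. The source algebras of $b^{\tilde G}$ and $b^{G^\tau}$ are then both crossed-product-like extensions of the source algebra of $b$ by $\tilde P/P$, respectively $\tau(\tilde P)$, with the same induced action. This should show that one is inertial iff the other is.
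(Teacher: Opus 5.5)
Your Step~2 fails at the sentence ``The kernel $\Delta P$ is not central.'' This is false, and it is the point on which the paper's proof turns. In $H=G\rtimes\tilde P$ (with $\tilde P$ acting by conjugation), the element $(x^{-1},x)$ with $x\in P$ acts on $G\times 1$ by conjugation by $x$ composed with the inner automorphism by $x^{-1}$, which is trivial. It also commutes with $1\times\tilde P$, because $x\in P\le\tilde P$ and $\tilde P$ is abelian. Hence $\{(x^{-1},x)\mid x\in P\}$ is a central $p$-subgroup of $H$, exactly like $1\times C_{\tilde P}(G)$. So $b^H$ dominates both $b^{\tilde G}$ and $b^{G^\tau}$ through quotients by central $p$-subgroups, and Puig's central-quotient result (the paper cites \cite[Corollary~1.14]{P1}) gives
$b^{\tilde G}$ inertial $\Leftrightarrow$ $b^H$ inertial $\Leftrightarrow$ $b^{G^\tau}$ inertial.
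Because you missed this, you replace a two-line argument with a heuristic about source algebras being ``crossed-product-like'' extensions, and nothing there is proved. The two gradings are by $\tilde G/G\cong\tilde P/P$ and $G^\tau/G\cong\tau(\tilde P)\cong\tilde P/C_{\tilde P}(G)$, which in general have different orders, so even the proposed comparison is not set up. As written, the inertiality equivalence is not established.

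Step~1 is also incomplete at its last move. You show that $\tau(\tilde P)$ stabilizes a maximal $b$-Brauer pair $(P,e_P)$ and that $P^\tau$ has the right order. Neither fact puts $P^\tau$ inside a defect group. The ``standard counting argument'' you invoke is not a precise statement here: $C_{G^\tau}(P)$ can be strictly larger than $C_G(P)$, so the usual description of defect groups over $PC(P)$ does not apply verbatim. The direct argument, which is the paper's, runs as follows.
\begin{itemize}
\item $P^\tau$ and $\tilde P$ act on $G$ through the same automorphism group $\tau(\tilde P)$, so $C_G(P^\tau)=C_G(\tilde P)$.
\item Hence $\mathrm{Br}^{P^\tau}_{\mathcal OG}(b)=\mathrm{Br}^{\tilde P}_{\mathcal OG}(b)$. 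The paper phrases this as transitivity of Brauer homomorphisms through $\mathrm{Br}^P_{\mathcal OG}$.
\item This is nonzero because $\tilde P$ is a defect group of $b=b^{\tilde G}$ and $b\in\mathcal OG$.
\item Since $b$ is supported on $G$, it follows that $\mathrm{Br}_{P^\tau}(b)\neq0$ in $\mathcal OG^\tau$, so $P^\tau$ lies in a defect group of $b^{G^\tau}$.
\end{itemize}
Your order count then shows that $P^\tau$ is a defect group.
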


\begin{proof}
Regard $\mathcal{O}G$ as a $\tilde G$-algebra via the conjugation. Given a $p$-subgroup $Q$ of $\tilde G$, denote by ${\rm Br}_{\mathcal{O} G}^Q$ the associated Brauer homomorphism. Since the index of $G$ in $\tilde G$ is a $p$-power, $Q$ is maximal such that ${\rm Br}_{\mathcal{O} \tilde G}^{Q}(b^{\tilde G})\neq 0$ if and only if $Q$ is maximal such that ${\rm Br}_{\mathcal{O} G}^{Q}(b)\neq 0$. On the other hand, it follows from the transitivity of Brauer homomorphisms that we have $${\rm Br}_{\mathcal{O} G}^{P^\tau}(b)={\rm Br}_{k C_G(P)}^{P^\tau}({\rm Br}_{\mathcal{O} G}^P(b))={\rm Br}_{k C_G(P)}^{\tilde P}({\rm Br}_{\mathcal{O} G}^P(b))={\rm Br}_{\mathcal{O} G}^{\tilde P}(b).$$ Consequently, the maximality of $\tilde P$ such that ${\rm Br}_{\mathcal{O} G}^{\tilde P}(b)\neq 0$ forces the maximality of $P^\tau$ such that ${\rm Br}_{\mathcal{O} G}^{P^\tau}(b)\neq 0$.
Therefore $P^\tau$ is maximal such that ${\rm Br}_{\mathcal{O} G^\tau}^{P^\tau}(b^{G^\tau})\neq 0$ and $P^\tau$ is a defect group of $b^{G^\tau}$.

Suppose that \(\tilde {P}\) is abelian. Then $P^\tau=P\times\tau(\tilde P)$ is abelian.
Consider the semidirect product $\hat G=G\rtimes \tilde P$ determined by the $\tilde P$-conjugation and set $Q=\{(x, x^{-1})|x\in P\}$. The map $\hat G\rightarrow \tilde G, (x, \tilde x)\mapsto x\tilde x$ is a surjective group homomorphism whose kernel $Q$ is central in $\hat G$.
As in the previous paragraph, we prove that $b^{\hat G}$ has defect group $P\times \tilde P$.
Obviously $b^{\hat G}$ dominates $b^{\tilde G}$. By \cite[Corollary 1.14]{P1}, $b^{\hat G}$ is inertial if and only if so is $b^{\tilde G}$. Similarly,
the map $\hat G\rightarrow G^\tau, (x, \tilde x)\mapsto (x, \tau(\tilde x))$ is a surjective homomorphism with kernel the central $p$-subgroup $\{(1, \tilde x)|\tilde x\in \tilde P\, \mbox{\rm and}\, \tau(\tilde x)=1\}$,
$b^{\hat G}$ dominates $b^{G^\tau}$, and $b^{\hat G}$ is inertial if and only if so is $b^{G^\tau}$.
Combining these equivalences, we conclude that $b^{\tilde G}$ is inertial if and only if so is $b^{G^\tau}$.
\end{proof}

\begin{lem}\label{Normal-one-orbit}
Keep the notation of Lemma~\ref{Defect}. Let $N$ be an
\(\Aut(G)_b\)-invariant subgroup of $G$, and let $c$ be a
$G$-invariant nilpotent block of $N$ covered by $b$. Set
\[
 D=P\cap N,\qquad E=G/N,\qquad
 E^\tau=E\rtimes\tau(\tilde P).
\]
Then there exist a finite group $L^\tau$, a normal subgroup
$L\trianglelefteq L^\tau$, and a surjective homomorphism
$\rho:L^\tau\to E^\tau$ with kernel $D$, satisfying the
following properties.

\smallskip\noindent{\bf 2.7.1.}
There is an injective homomorphism $P^\tau\hookrightarrow L^\tau$, which we use
to identify $P^\tau$ with its image, such that the composite $
 P^\tau\rightarrow L^\tau\stackrel{\rho}{\rightarrow}E^\tau
$
is the natural map with kernel $D$. In particular, the subgroup \(\tau(\tilde P)\) of $P^\tau$ is mapped identically onto
the corresponding subgroup of $E^\tau$.

\smallskip\noindent{\bf 2.7.2.}
We have $L=\rho^{-1}(E)$, $P^\tau\cap L=P$, and $\rho$
induces an isomorphism $L/D\cong E$. Moreover, we have
$
 L^\tau=L\rtimes\tau(\tilde P).
$

\smallskip\noindent{\bf 2.7.3.}
There is a central extension
$
 1\rightarrow Z\rightarrow\tilde L^\tau
   \stackrel{\eta}{\rightarrow}L^\tau\rightarrow1,
$
where $Z$ is a finite subgroup of $k^*$. Set
\(\tilde L=\eta^{-1}(L)\). There is a
\(\tilde L^\tau\)-invariant block $d$ of \(\tilde L\)
which is also a block of \(\tilde L^\tau\), such that
$$
 \mathcal O G b\simeq_{\mathrm{bM}}\mathcal O\tilde Ld
\quad {\rm and} \quad
 \mathcal O G^\tau b^{G^\tau}\simeq_{\mathrm{bM}}
 \mathcal O\tilde L^\tau d^{\tilde L^\tau},$$
where \(\simeq_{\mathrm{bM}}\) denotes a basic Morita equivalence.
Furthermore, $Z$ has $p'$-order and a complement to $Z$ in
\(\eta^{-1}(P^\tau)\) can be chosen to be a defect group of $d^{\tilde L^\tau}$.
We identify this complement with $P^\tau$.

\smallskip\noindent{\bf 2.7.4.}
If \(\IBr(b)\) is a single \(\Aut(G)_b\)-orbit, then
\(\IBr(d)\) is a single \(\Aut(\tilde L)_d\)-orbit.
\end{lem}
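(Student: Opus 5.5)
This lemma is an instance of Puig's structure theorem for blocks covering a nilpotent block (\cite{P1}, with K\"ulshammer--Puig). We apply it to the group $G^\tau$ and its normal subgroup $N$, rather than to $G$ alone.

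\emph{Normality and invariance.} Since $N$ is $\Aut(G)_b$-invariant and $\tau(\tilde P)\le\Aut(G)_b$ (because $b$ is $\tilde G$-invariant), $N$ is normal in $G^\tau$. The block $c$ is $G$-invariant. It is also the unique block of $N$ covered by $b$, hence $\Aut(G)_b$-stable, and in particular $G^\tau$-stable. Also $b^{G^\tau}$ covers $b$, which covers $c$. By Lemma~\ref{Defect}, $P^\tau$ is a defect group of $b^{G^\tau}$, and $P^\tau\cap N=P\cap N=D$ is a defect group of $c$ by \cite[Theorem 9.26]{N1}.

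\emph{Construction (2.7.1--2.7.3).} Apply the K\"ulshammer--Puig theorem to the nilpotent $G^\tau$-stable block $c$ of $N\trianglelefteq G^\tau$. It produces an extension $1\to D\to L^\tau\xrightarrow{\rho} G^\tau/N=E^\tau\to 1$ and an embedding $P^\tau\hookrightarrow L^\tau$ lifting $P^\tau\to P^\tau N/N$. It also produces a central $p'$-extension $\tilde L^\tau$ of $L^\tau$ by $Z\le k^*$, and a block $d'$ of $\tilde L^\tau$ with a basic Morita equivalence $\mathcal O G^\tau b^{G^\tau}\simeq_{\mathrm{bM}}\mathcal O\tilde L^\tau d'$, under which $P^\tau$ is a defect group of $d'$. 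Set $L=\rho^{-1}(E)$. Since $E^\tau=E\rtimes\tau(\tilde P)$ and $\tau(\tilde P)$ embeds in $L^\tau$ through $P^\tau$ with trivial intersection with $L$, we get $L^\tau=L\rtimes\tau(\tilde P)$ and $P^\tau\cap L=P$.

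Next apply the same theorem to $G$ and its subgroup $N$. The construction is compatible with restriction to the normal subgroup $G$ of $G^\tau$ (Puig's equivalences are functorial for normal subgroups containing $N$). So $\mathcal OGb\simeq_{\mathrm{bM}}\mathcal O\tilde L d$, where $d$ is the block of $\tilde L$ corresponding to $b$. Because $b$ is $G^\tau$-stable and $G^\tau/G$ is a $p$-group, $d$ is $\tilde L^\tau$-stable and is a block of $\tilde L^\tau$, and then $d'=d^{\tilde L^\tau}$.

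\emph{Orbit property (2.7.4).} A basic Morita equivalence induces a bijection $\IBr(b)\to\IBr(d)$. I need automorphisms of $G$ stabilizing $b$ to transfer to automorphisms of $\tilde L$ stabilizing $d$, compatibly with this bijection. First, for any $\alpha\in\Aut(G)_b$, the block $c^\alpha=c$, and $D^\alpha$ is $N$-conjugate to $D$, as in Lemma~\ref{Normal-P-cap-N}. Composing $\alpha$ with an inner automorphism, we may assume $\alpha$ stabilizes a chosen source pointed group. The uniqueness part of Puig's theorem says $(L,\tilde L,d)$ is determined up to isomorphism by $(G,N,c)$ together with the local data. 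Hence $\alpha$ induces an automorphism $\tilde\alpha$ of $\tilde L$ fixing $d$, and the Morita bijection is equivariant with respect to $\alpha\mapsto\tilde\alpha$. Transitivity on $\IBr(b)$ then transfers to transitivity of $\Aut(\tilde L)_d$ on $\IBr(d)$.

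The main obstacle is this last step, the naturality of Puig's construction with respect to automorphisms of $G$. If this is not available directly, I would build it by hand. The extension $L$ is defined from the action of $G$ on the Dade $P$-algebra (the multiplicity module) of the nilpotent block $c$, so an automorphism fixing $c$ and the pointed group transports this data and yields $\tilde\alpha$ canonically up to inner automorphisms.
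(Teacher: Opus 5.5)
\textbf{There is a genuine gap in 2.7.4.} Your treatment of 2.7.1--2.7.3 essentially matches the paper. One application of the structure theorem for extensions of nilpotent blocks gives a graded basic Morita equivalence, and $L$, $L^\tau$, $\tilde L$, $\tilde L^\tau$, $d$ and both equivalences come from restricting it to the homogeneous components indexed by $E\le E^\tau$. You should obtain $\mathcal OGb\simeq_{\mathrm{bM}}\mathcal O\tilde Ld$ this way, not from a separate application of the theorem to $G$, which would not a priori produce the same $\tilde L$.

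In 2.7.4 you infer from ``uniqueness up to isomorphism'' in K\"ulshammer--Puig that each $\alpha\in\Aut(G)_b$ induces some $\tilde\alpha\in\Aut(\tilde L)_d$ making the bijection $\IBr(b)\to\IBr(d)$ equivariant. Uniqueness up to isomorphism does not give this. Transporting the construction by $\alpha$ gives a second basic Morita equivalence between $\mathcal OGb$ and $\mathcal O\tilde Ld$. Comparing the two gives a self-Morita equivalence of $\mathcal O\tilde Ld$, and nothing forces it to be induced by an automorphism of $\tilde L$; it may, for instance, involve tensoring with a linear character. So the permutation of $\IBr(d)$ coming from $\alpha$ need not lie in the image of $\Aut(\tilde L)_d$. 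You would also have to show that the relevant automorphism of $L$ lifts to the chosen finite central extension $\tilde L$. You flag this yourself as the main obstacle, and the Dade-algebra fallback is only sketched, so 2.7.4 is not proved as written.

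The missing idea is to make the automorphisms inner before applying the structure theorem. Put $\mathcal A=\Aut(G)_b$, $\mathcal G=G\rtimes\mathcal A$ and $\mathcal E=E\rtimes\mathcal A$. Then $N\trianglelefteq\mathcal G$, and $c$ is $\mathcal G$-stable, being the unique block of $N$ covered by the $\mathcal A$-stable block $b$. Applying the theorem to $N\trianglelefteq\mathcal G$ gives:
\begin{itemize}
\item an extension $1\to D\to\mathcal L\to\mathcal E\to 1$;
\item a finite central $p'$-extension $\tilde{\mathcal L}$;
\item an $\mathcal E$-graded basic Morita equivalence $\mathcal O\mathcal Gc\simeq_{\mathrm{bM}}\mathcal O\tilde{\mathcal L}e$.
\end{itemize}
Now take $L$ and $L^\tau$ to be the inverse images of $E$ and $E^\tau$, so 2.7.1--2.7.3 follow as before.

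For 2.7.4, choose $\chi\in\IBr(\mathcal G)$ lying over some $\varphi\in\IBr(b)$, and argue as follows.
\begin{enumerate}
\item Clifford theory and transitivity of $\mathcal A$ give $\chi_G=m\sum_{\theta\in\IBr(b)}\theta$.
\item The graded equivalence, via Marcus's results on graded Morita equivalences and restriction, turns this into $\psi_{\tilde L}=m\sum_{\eta\in\IBr(d)}\eta$ for the corresponding $\psi\in\IBr(\tilde{\mathcal L})$.
\item Clifford theory for $\tilde L\trianglelefteq\tilde{\mathcal L}$ then shows that $\IBr(d)$ is a single $\tilde{\mathcal L}$-orbit.
\end{enumerate}
Conjugation by $\tilde{\mathcal L}$ fixes $d$ and induces automorphisms of $\tilde L$. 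Hence $\IBr(d)$ is a single $\Aut(\tilde L)_d$-orbit, and no naturality of the K\"ulshammer--Puig construction is ever needed.
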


\begin{proof}
Set \(\mathcal A=\Aut(G)_b\), \(\mathcal G=G\rtimes\mathcal A\), and \(\mathcal E=E\rtimes\mathcal A\).
Since $c$ is $G$-invariant, it is the unique block of $N$
covered by $b$. The subgroup $N$ and the block $b$ are
\(\mathcal A\)-invariant, and hence so is $c$. Moreover,
$D=P\cap N$ is a defect group of $c$.
The structure theorem for extensions of nilpotent blocks
\cite[Theorem~3.5 and Corollary~3.14]{PZ} yields an extension
\[
 1\rightarrow D\rightarrow\mathcal L
   \rightarrow\mathcal E\rightarrow1
\]
such that $\mathcal L$ contains $P^\tau$ and the quotient map $\mathcal L
   \rightarrow\mathcal E$ preserves \(\tau(\tilde P)\) elementwise; moreover, it also yields a class
\(\alpha\in H^2(\mathcal L,k^*)\) and, in the terminology of
\cite[Section~9]{CMT}, an \(\mathcal E\)-graded basic Morita
equivalence
\[
 \mathcal O\mathcal Gc
   \simeq_{\mathrm{bM}}
 \mathcal O_{\alpha}\hat{\mathcal L}.
\leqno{2.7.5}
\]
By \cite[Proposition~1.2.18]{L}, the $k^*$-group
\(\hat{\mathcal L}\) contains a finite subgroup
\(\tilde{\mathcal L}\) such that
\(\hat{\mathcal L}=k^*\tilde{\mathcal L}\). If
\(Z=k^*\cap\tilde{\mathcal L}\), then, for a suitable primitive
idempotent $e\in\mathcal OZ$, $\mathcal O_{\alpha}\hat{\mathcal L}$ is
isomorphic to
\(\mathcal O\tilde{\mathcal L}e\) as an \(\mathcal E\)-graded algebra. In particular, lifting $D$ to a subgroup of $\tilde{\mathcal L} $ and identifying $D$ with the subgroup, we have
\[
 \tilde{\mathcal L}/DZ\cong\mathcal E.
\leqno{2.7.6}
\]

Let $L$ and $L^\tau$ be the inverse images of $E$ and
$E^\tau$, respectively, in \(\mathcal L\), and let
$\tilde L$ and \(\tilde L^\tau\) be their full inverse
images in \(\tilde{\mathcal L}\). The construction of
\(\mathcal L\), together with the injective homomorphism $P^\tau\rightarrow \tilde{\mathcal L}$,
gives Statements~1 and~2. Restricting 2.7.5 to the homogeneous
components indexed by $E$ and $E^\tau$ gives, respectively,
graded basic Morita equivalences
\[
 \mathcal OGc\simeq_{\mathrm{bM}}\mathcal O\tilde Le
 \quad\text{and}\quad
 \mathcal OG^\tau c\simeq_{\mathrm{bM}}
       \mathcal O\tilde L^\tau e.
\leqno 2.7.7
\]
Let $d$ be the block of \(\tilde L\) corresponding to $b$
under the first equivalence in 2.7.7. Since $b$ is
\(G^\tau\)-invariant and $L^\tau/L$ is a $p$-group, $d$ is
\(\tilde L^\tau\)-invariant and is also a block of
\(\tilde L^\tau\). The second equivalence in 2.7.7 restricts to
a basic Morita equivalence
\[
 \mathcal OG^\tau b^{G^\tau}
   \simeq_{\mathrm{bM}}
 \mathcal O\tilde L^\tau d.
\leqno 2.7.8
\]
By Lemma~\ref{Defect}, $P^\tau$ is a defect group of
\(b^{G^\tau}\).
Since $Z\leq k^*$ is finite, $Z$ has $p'$-order and hence
\(\eta^{-1}(P^\tau)\) has a complement to $Z$. According to \cite[Corollary~3.14]{PZ} and the choice of Morita equivalence in 2.7.5,
this complement may be chosen as a defect group of $d$. This proves
Statement~3.

It remains to prove Statement~4. Assume that \(\IBr(b)\) is a single
\(\mathcal A\)-orbit. Choose \(\varphi\in\IBr(b)\) and an
irreducible Brauer character \(\chi\in\IBr(\mathcal G)\) lying over
\(\varphi\). Clifford theory and the transitivity of \(\mathcal A\)
give
\[
 \chi_G=m\sum_{\theta\in\IBr(b)}\theta
\leqno 2.7.9
\]
for some positive integer $m$. Let \(\psi\) be the irreducible
Brauer character of \(\tilde{\mathcal L}\) corresponding to
\(\chi\) under 2.7.5. By \cite[Theorem~5.1.18(a)]{Marcus}, the equality in 2.7.9 corresponds to the equality
\[
 \psi_{\tilde L}
   =m\sum_{\eta\in\IBr(d)}\eta.
\]
By Clifford theory, the irreducible constituents of
\(\psi_{\tilde L}\) form a single
\(\tilde{\mathcal L}\)-orbit. Thus conjugation by
\(\tilde{\mathcal L}\) is transitive on \(\IBr(d)\); it also
stabilizes $d$ and induces automorphisms of \(\tilde L\).
Consequently, \(\IBr(d)\) is a single
\(\Aut(\tilde L)_d\)-orbit.
\end{proof}

\begin{prop}\label{Reduction}
Assume that Theorem~\ref{Main} holds whenever $G$ is quasisimple. Then Theorem~\ref{Main} holds for arbitrary finite group $G$.
\end{prop}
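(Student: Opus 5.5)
We argue by a minimal counterexample. Choose $(\tilde G,G,\tilde b,b)$ violating Theorem~\ref{Main} with $|G|$ minimal, and then $|\tilde G|$ minimal. Let $P$ be a defect group of $b$ and $\tilde P$ one of $\tilde b$ with $\tilde P\cap G=P$; by hypothesis $\tilde P$ is abelian.

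\textbf{Step 1: reduce to a $\tilde G$-invariant block, and then to $G^\tau$.} Let $\tilde G_b$ be the stabilizer of $b$ in $\tilde G$. By Fong--Reynolds, $\tilde b$ is Morita equivalent to its correspondent in $\tilde G_b$, and this equivalence is basic. The correspondent still has abelian defect groups, still covers $b$, and $|\tilde G_b:G|$ is a power of $p$. So we may assume $b$ is $\tilde G$-invariant. Then $\tilde b=b^{\tilde G}$, and by Lemma~\ref{Defect} it suffices to show that $b^{G^\tau}$ is inertial, where $G^\tau=G\rtimes\tau(\tilde P)$ has abelian defect group $P^\tau$.

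\textbf{Step 2: remove nilpotent normal pieces.} Let $N$ be a characteristic subgroup of $G$; in particular $N$ is $\Aut(G)_b$-invariant. By Lemma~\ref{Fong}, the blocks of $N$ covered by $b$ form a single $\Aut(G)_b$-orbit up to $G$-conjugacy. When the covered block $c$ is not $G$-invariant, the Fong--Reynolds correspondent $f$ lives in a proper subgroup $G_c$. By Lemma~\ref{Fong}, $\IBr(f)$ is again a single orbit, so minimality of $|G|$ applies to $f$ in $\tilde G_c\supseteq G_c$. Hence we may assume that every block of a characteristic subgroup covered by $b$ is $G$-invariant.

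Take $N=\mathbf O_{p'}(G)$ or $N=\mathbf Z(G)\mathbf O_p(G)$, and more generally $N=\mathbf F(G)$. For each such $N$ the covered block is nilpotent: a block of a $p$-nilpotent group, or one obtained through $p$-nilpotent central products, is nilpotent. Lemma~\ref{Normal-one-orbit} then replaces $(G,G^\tau,b)$ by $(\tilde L,\tilde L^\tau,d)$, which has the following properties:
\begin{itemize}
\item $b^{G^\tau}$ and $d^{\tilde L^\tau}$ are basically Morita equivalent, so they have the same defect group $P^\tau$ and are inertial simultaneously;
\item $\IBr(d)$ is a single $\Aut(\tilde L)_d$-orbit, by 2.7.4;
\item $\tilde L/DZ\cong G/N$.
\end{itemize}
Whenever $N$ is not a $p'$-central subgroup, minimality applies. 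Iterating, we reach the situation $\mathbf F(G)=\mathbf Z(G)$ with $\mathbf O_p(G)$ central. Then $\mathbf F^*(G)=\mathbf Z(G)\mathbf E(G)$.

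\textbf{Step 3: the layer.} Consider the components of $G$. If some component is not normal, the block of $\mathbf E(G)$ covered by $b$ is a tensor product over an orbit of components. By Lemma~\ref{Normal-one-orbit2}, each component block $f$ has $\IBr(f)$ a single $\Aut(X)_f$-orbit. The quasisimple hypothesis, applied with $\tilde G=G=X$, shows $f$ is inertial with abelian defect. A tensor product of inertial blocks is inertial, so the block $d$ of $\mathbf E(G)$ is inertial. It should also be nilpotent-like enough to feed into the extension theory of \cite{P1}.

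Now assume all components are normal. Then Lemma~\ref{Solv} shows $G/\mathbf F^*(G)$ is solvable. Lemma~\ref{Abel} then gives that $P\mathbf F^*(G)/\mathbf F^*(G)$ is a Sylow $p$-subgroup, and since $P$ is abelian this Sylow subgroup is abelian. The block of $\mathbf F^*(G)$ covered by $b$ is a central product of blocks of quasisimple groups. Each of these is inertial by hypothesis (applied within $\mathbf F^*(G)$ together with suitable $p$-power overgroups), so it is inertial. Finally we apply Puig's theorem on extensions of inertial blocks with abelian defect by $p$-solvable quotients \cite{P1} to the chain $\mathbf F^*(G)\trianglelefteq G\trianglelefteq G^\tau$. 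By Lemma~\ref{Normal-P-cap-N}, $C_G(D)N$ is $\Aut(G)_b$-invariant, which allows us to pass first to $C_G(D)N$ and then to $G$.

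\textbf{Main obstacle.} The hardest step is this last one. We must verify that the inertial block of $\mathbf F^*(G)$, together with the $p$-solvable, Sylow-abelian quotient $G^\tau/\mathbf F^*(G)$, satisfies the hypotheses of \cite{P1}. In particular, the $p'$-part of the quotient acting on $D$ must be controlled. To handle this, we would first use Lemma~\ref{Normal-P-cap-N} to reduce to $G=C_G(D)\mathbf F^*(G)$ and then invoke \cite[Corollary 1.14]{P1}.
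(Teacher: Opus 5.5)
Your first step (Fong--Reynolds reduction to a $\tilde G$-invariant $b$, then Lemma~\ref{Defect}) matches the paper. The rest of the proof has genuine gaps, beginning with the induction. You minimize $|G|$ (then $|\tilde G|$), but Lemma~\ref{Normal-one-orbit} only gives $\tilde L/DZ\cong G/N$, where $Z\le k^*$ is a finite $p'$-group coming from the cocycle $\alpha$. Nothing there bounds $|Z|$ by $|N|/|D|$, so $|\tilde L|<|G|$ is not guaranteed and your claim that ``minimality applies'' is unjustified. The paper minimizes $|G:\mathbf O_{p'}(\mathbf Z(G))|$ precisely so that $Z$ is absorbed. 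With $N=H\mathbf Z(G)$ one gets $|\tilde L:\mathbf O_{p'}(\mathbf Z(\tilde L))|\le|G:N|\,|D|\le|G:\mathbf O_{p'}(\mathbf Z(G))|$, and equality forces $N=D\times\mathbf O_{p'}(\mathbf Z(G))$.

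Even with this measure the outcome is only $H\le\mathbf Z(G)\mathbf O_p(G)$, i.e.\ $\mathbf O_{p'}(G)\le\mathbf Z(G)$. For $N=\mathbf Z(G)\mathbf O_p(G)$ the bound gives no decrease at all, so no iteration makes $\mathbf O_p(G)$ central. That needs a separate reduction, which you only gesture at. Set $Q=\tilde P\cap\mathbf F^*(G)$. The subgroup $\tilde K=C_{\tilde G}(Q)\mathbf F^*(G)$ is normal, by the argument of Lemma~\ref{Normal-P-cap-N}. The Frattini argument with a common maximal Brauer pair $(\tilde P,e_{\tilde P})$ shows $\tilde G/\tilde K$ is a $p'$-group. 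Zhou's theorem (a block covering an inertial block of a normal subgroup of $p'$-index is inertial) then makes $(\tilde K,K,b_K)$ a counterexample, whence $K=G$. Only after this do $\mathbf O_p(G)\le Q$ and $P$ abelian give $\mathbf O_p(G)\le\mathbf Z(G)$. The tool you name, \cite[Corollary~1.14]{P1}, concerns quotients by central $p$-subgroups and does not perform this reduction.

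The layer step has two further gaps. First, you never prove that the components are normal in $\tilde G$. If $\tilde P$ permutes components, ``a tensor product of inertial blocks is inertial'' says nothing about the block of $\tilde P\mathbf E(G)$. Also, the quasisimple hypothesis must be applied to $X_i\trianglelefteq\tilde PX_i$, which needs $X_i$ to be $\tilde P$-stable, not to $\tilde G=G=X$. The paper gets $X_i\trianglelefteq\tilde G$ as follows:
\begin{itemize}
\item No covered component block $d_i$ is nilpotent. Otherwise the product of those components is an $\Aut(G)_b$-invariant normal subgroup with nilpotent covered block, hence lies in $\mathbf Z(G)\mathbf O_p(G)$, which is absurd.
\item Therefore $Q\cap X_i\not\le\mathbf Z(X_i)$, which forces $C_{\tilde G}(Q)$ to normalize each $X_i$.
\item Since $\tilde G=C_{\tilde G}(Q)\mathbf F^*(G)$, each $X_i$ is normal in $\tilde G$.
\end{itemize}

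Second, there is no general result in \cite{P1} lifting inertiality across an arbitrary $p$-solvable quotient. The paper applies Lemma~\ref{Abel} with $N=\mathbf E(G)$, and uses $p$-length one for abelian Sylow subgroups, to get $\tilde G/\mathbf E(G)=\mathbf O_{p',p,p'}(\tilde G/\mathbf E(G))$. It then climbs the three layers with specific results: \cite[Proposition~2.3]{ZZ1} for $\tilde P\mathbf E(G)$, \cite[Corollary~1.4]{ZZ} for $\tilde T=\tilde P\tilde M$, and \cite[Corollary]{Zhou} for the top $p'$-quotient.
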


\begin{proof}
We continue to employ the notation and {assumptions} in Theorem~\ref{Main}.
We aim to show that \(\tilde{b}\) is inertial. Denote by $\tilde G_b$ the stabilizer of $b$ in $\tilde G$.
Since the index $|\tilde G: G|$ is a $p$-power, we have $b^{\tilde G}=\tilde b$ and $b^{\tilde G_b}=b$.
Clearly $b^{\tilde G}$ is the Fong-Reynolds correspondent of $b^{\tilde G_b}$ in $\tilde G$ and they have common defect groups and are basically Morita equivalent. Hence if $b^{\tilde G_b}$ is inertial, $\tilde b$ is inertial since basic Morita equivalence preserves the inertiality of blocks. So we may
assume that the block $b$ is $\tilde G$-invariant, so that $\tilde b=b^{\tilde G_b}$.
Choose a defect group $\tilde P$ of the block $\tilde b$ and set $P=\tilde P\cap G$. Since the index of $G$ in $\tilde{G}$ is a power of $p$, it follows that $\tilde G=\tilde P G$ and $P$ is a defect group of the block $b$.

Suppose that Theorem~\ref{Main} does not hold in general, and choose a
counterexample $(\tilde G, G, b)$ for which
$|G:\mathbf{O}_{p'}(\mathbf{Z}(G))|$
is the smallest. Let $H$ be an $\Aut(G)_b$-invariant subgroup of
$G$, and let $c$ be a block of $H$ covered by $b$. Since
$\mathrm{Inn}(G)\leq\Aut(G)_b$, the subgroup $H$ is normal in
$G$. Moreover, conjugation by $\tilde G$ induces automorphisms of $G$
stabilizing $b$, hence $H$ is normal in $\tilde G$. Write $G_c$
and $\tilde G_c$ for the stabilizers of $c$ in $G$ and $\tilde G$,
respectively. Let $e$ and $\tilde e$ be the Fong--Reynolds
correspondents of $b$ in $G_c$ and $\tilde G_c$, respectively. We have
$G_c=G\cap\tilde G_c\trianglelefteq\tilde G_c$, and
$\tilde G_c/G_c$ is a $p$-group. Since both $\tilde e$ and $e$ cover $c$, $\tilde e$ covers $e$.
Moreover, $\tilde e$ has abelian defect groups because the
Fong-Reynolds correspondence preserves defect groups. By
Lemma~\ref{Fong}, $\IBr(e)$ is a single
$\Aut(G_c)_e$-orbit.

We claim that $G_c=G$. Indeed,
$\mathbf{O}_{p'}(\mathbf{Z}(G))\leq
\mathbf{O}_{p'}(\mathbf{Z}(G_c))$. Thus, if
$G_c<G$, then we have
\[
 |G_c:\mathbf{O}_{p'}(\mathbf{Z}(G_c))|
 \leq |G_c:\mathbf{O}_{p'}(\mathbf{Z}(G))|
 < |G:\mathbf{O}_{p'}(\mathbf{Z}(G))|.
\]
The {minimality} of the counterexample would then imply that
$\tilde e$ is inertial. Since $\mathcal{O}\tilde G_c\tilde e$ and $\mathcal{O}\tilde G\tilde b$
are basically Morita equivalent, $\mathcal{O}\tilde G\tilde b$ is inertial, a
contradiction. Hence $G_c=G$, so $c$ is $G$-invariant and unique.
It follows that $c$ is $\Aut(G)_b$-invariant and also
$\tilde G$-invariant. We denote it by $b_H$. Finally,
Lemma~\ref{Fong} shows that $\IBr(b_H)$ is a single
$\Aut(H)_{b_H}$-orbit, and
\cite[Theorem~9.26]{N1} shows that $P\cap H$ is a defect group of
$b_H$.

\medskip\noindent
\textbf{Step 1.} Assume that \(b_H\) is nilpotent. Then \(H \leq \mathbf{Z}(G)\mathbf{O}_p(G)\). In particular, \(\mathbf{O}_{p'}(G) \leq \mathbf{Z}(G)\).

\smallskip
Set \(N=H\mathbf{Z}(G)\). Then \(N\) is
\(\Aut(G)_b\)-invariant. By the preceding reduction, there is
a unique block \(b_N\) of \(N\) covered by \(b\), and
\(D=P\cap N\) is a defect group of \(b_N\). By transitivity of
block covering, \(b_N\) covers \(b_H\) and a block of
\(\mathbf{Z}(G)\). Since $b_H$ is nilpotent, it follows from \cite[Lemma~2.7(3)]{Ar} that \(b_N\) is nilpotent.

Apply Lemma~\ref{Normal-one-orbit} to \(N\) and \(c=b_N\) and {use}
the notation of that lemma. Thus \(\tilde L\trianglelefteq\tilde
L^\tau\), the quotient \(\tilde L^\tau/\tilde L\) is a \(p\)-group,
and \(d\) is a block of both \(\tilde L\) and \(\tilde L^\tau\).
We first claim that \((\tilde L^\tau,\tilde L,d)\) is a
counterexample to Theorem~\ref{Main}. It follows from Lemma~\ref{Normal-one-orbit}(4) that
\(\IBr(d)\) is a single \(\Aut(\tilde L)_d\)-orbit.
Lemma~\ref{Normal-one-orbit}(3) shows that \(P^\tau\) is a defect group of \(d\), which is abelian since $\tilde P$ is abelian. Since $\tilde b$ is not inertial, by Lemma~\ref{Defect}, the block $b^{G^\tau}$ is not inertial.
Lemma~\ref{Normal-one-orbit}(3) gives a basic Morita equivalence
between $b^{G^\tau}$ and $d^{\tilde L^\tau}$. Since inertiality is
preserved under basic Morita equivalence, $d^{\tilde L^\tau}$ is not
inertial. Hence $(\tilde L^\tau,\tilde L,d)$ is a counterexample to
Theorem~\ref{Main}. {This proves the claim.}

Set \(Z_0=\mathbf{O}_{p'}(\mathbf{Z}(G))\) and let \(Z\) be the
central \(p'\)-subgroup occurring in
Lemma~\ref{Normal-one-orbit}(3). We may identify \(D\) with its lift
to \(\tilde L\). Since
$Z\leq \mathbf{O}_{p'}(\mathbf{Z}(\tilde L))$ and $\tilde L/DZ\cong G/N$,
we obtain
\[
 |\tilde L:\mathbf{O}_{p'}(\mathbf{Z}(\tilde L))|
 \leq |\tilde L:Z|\\
 =|G:N|\,|D|\\
 \leq |G:Z_0|.
\]
For the last inequality, note that \(DZ_0\leq N\) and
\(D\cap Z_0=1\), so that \(|D|\,|Z_0|\leq |N|\). Since
\((\tilde L^\tau,\tilde L,d)\) is a counterexample, the minimal choice
of \((\tilde G,G,b)\) gives the reverse inequality
\[
 |G:Z_0|
 \leq |\tilde L:\mathbf{O}_{p'}(\mathbf{Z}(\tilde L))|.
\]
Consequently, equality holds throughout the preceding chain. In
particular, \(|N|=|D|\,|Z_0|\), and hence
\(N=DZ_0=D\times Z_0\). Thus \(D\) is the unique Sylow
\(p\)-subgroup of \(N\). Since \(N\trianglelefteq G\), it follows
that \(D\trianglelefteq G\), and therefore \(D\leq\mathbf{O}_p(G)\).
{Therefore},
\[
 H\leq N=DZ_0\leq \mathbf{O}_p(G)\mathbf{Z}(G).
\]

Finally, take \(H=\mathbf{O}_{p'}(G)\). This subgroup is
characteristic in \(G\) and every block of \(H\) has defect zero and
is therefore nilpotent. The result just proved gives
\(H\leq\mathbf{Z}(G)\mathbf{O}_p(G)\). Hence
\(\mathbf{O}_{p'}(G)=H\leq\mathbf{Z}(G)\).

\medskip\noindent
\textbf{Step 2.} Set \(Q = \tilde{P} \cap \mathbf{F}^{*}(G)\). Then \(\tilde{G} = C_{\tilde{G}}(Q)\mathbf{F}^{*}(G)\) and \(G = C_{G}(Q)\mathbf{F}^{*}(G)\).

\smallskip

Set \(\tilde{K} = C_{\tilde{G}}(Q)\mathbf{F}^*(G)\) and \(K = C_G(Q)\mathbf{F}^*(G)\).
Since \(\tilde{P}\) is abelian and \(\tilde{G} = \tilde{P}G\),
we have \(\tilde{K} = \tilde{P}K\). By the first and second paragraph of the proof,
the block \(b_{\mathbf{F}^*(G)}\) has defect group $Q$ and it is \(\tilde{G}\)-invariant.
The same argument {as in the proof of} Lemma~\ref{Normal-P-cap-N} proves that \(\tilde{K}\) is normal in \(\tilde{G}\).
Since \(b\) covers \(b_K\), the block \(b\) of $\tilde G$ covers the block \(b_K\) of $\tilde K$. Therefore \(\tilde{P}\) is a defect group of the block \(b_K\) of $\tilde K$. Since \(\IBr(b)\) is a single \(\Aut(G)_{b}\)-orbit, by Lemma \ref{Fong} and \ref{Normal-P-cap-N}, \(\IBr(b_K)\) is a single \(\Aut(K)_{b_K}\)-orbit.

Since \( C_{\tilde{G}}(\tilde{P}) \leq C_{\tilde{G}}(Q) \leq \tilde{K} \), we may choose a common maximal Brauer pair \( (\tilde{P}, e_{\tilde{P}}) \) for the block \( b \) of $\tilde G$ and the block \( b_{K} \) of $\tilde K$. Since \(K\) is \(\Aut(G)_b\)-invariant (see Lemma \ref{Normal-P-cap-N}), by the second paragraph of the proof, the block \(b_K \) of $K$ is \(G\)-invariant and thus the block \(b_K \) of $\tilde K$ is \(\tilde G\)-invariant. The Frattini argument then gives
\(
\tilde{G} = \tilde{K} \cdot N_{\tilde{G}}(\tilde{P}, e_{\tilde{P}}).
\)
Since the quotient group \( N_{\tilde{G}}(\tilde{P}, e_{\tilde{P}})/C_{\tilde{G}}(\tilde{P}) \) is a \( p' \)-group, \( \tilde{G}/\tilde{K} \) is a \( p' \)-group. By \cite[Corollary]{Zhou}, the block \( b_{K} \) of $\tilde{K}$ is not inertial. Consequently, ($\tilde K$, $K$, $b_K$) is a counterexample of Theorem~\ref{Main}.

However, if $K<G$, since \(\mathbf{Z}(G) \leq \mathbf{Z}(K)\) and thus $\mathbf{O}_{p'}(\mathbf{Z}(G))\leq \mathbf{O}_{p'}(\mathbf{Z}(K))$, we have
\(|K : \mathbf{O}_{p'}(\mathbf{Z}(K))| < |G : \mathbf{O}_{p'}(\mathbf{Z}(G))|\). This contradicts the minimality of the index $|G : \mathbf{O}_{p'}(\mathbf{Z}(G))|$. So $K=G$ and hence $\tilde K=\tilde G$.

\medskip\noindent
\textbf{Step 3.} The layer \(\mathbf{E}(G)\) is nontrivial.

\smallskip
Suppose \(\mathbf{E}(G) = 1\). Then we have \(\mathbf{F}^*(G) = \mathbf{F}(G)\). By Step 1, we have \(\mathbf{O}_{p'}(G) \leq \mathbf{Z}(G)\), so that \(\mathbf{F}^*(G) = \mathbf{O}_p(G) \mathbf{Z}(G)\). Since \(P\) is abelian, {$P$ centralizes $\mathbf{F}^*(G)$}; the inclusion \(C_G(\mathbf{F}^*(G)) \subseteq \mathbf{F}^*(G)\) therefore gives \(P \subseteq \mathbf{F}^*(G)\).
Consequently we have
\(
P = \mathbf{O}_p(G) = Q\), $\mathbf{F}^{*}(G)\leq C_G(Q)$ and therefore \(G= C_G(Q).
\)
Thus the block \(b\) of $G$ is nilpotent and so is the block $b$ of $\tilde G$. This contradicts the earlier assumption on $b$.

\medskip\noindent
\textbf{Step 4.} Let \(\{X_i\}_{i=1}^r\) be the components of \(G\). Then each \(X_i\) is normal in \(\tilde{G}\).

\smallskip
For each component \(X_i\), let \(d_i\) be the block of \(X_i\)
covered by \(b_{\mathbf{E}(G)}\).
We first claim that none of the blocks \(d_i\) is nilpotent. Set
$
 I=\{i\mid 1\leq i\leq r, d_i\, \text{is nilpotent}\}$.
Every element of \(\Aut(G)_b\) stabilizes the characteristic
subgroup \(\mathbf{E}(G)\) and its unique covered block
\(b_{\mathbf{E}(G)}\), and permutes the components \(X_i\) and their
blocks \(d_i\). Since nilpotency is preserved by automorphisms, the set
\(I\) is invariant under this permutation action. Hence, if \(I\neq
\varnothing\), then
\[
 L=\prod_{i\in I}X_i
\]
is an \(\Aut(G)_b\)-invariant normal subgroup of \(G\). By
\cite[Lemma~2.7(3)]{Ar}, the block \(b_L\) of \(L\) covered by \(b\)
is nilpotent. Step~1 therefore yields
$L\leq \mathbf{Z}(G)\mathbf{O}_p(G)$. This is impossible because $L$ is a
nonempty central product of components. Consequently,
\(I=\varnothing\) and every \(d_i\) is nonnilpotent.

Fix \(i\) and set \(D_i=Q\cap X_i\). By
\cite[Theorem~9.26]{N1}, \(D_i\) is a defect group of \(d_i\). Since \(d_i\) is not
nilpotent, we have
\[
 Q\cap\mathbf{Z}(X_i)<D_i=Q\cap X_i.
\]
Let \(x\in C_{\tilde G}(Q)\). Since \(x\) centralizes \(Q\), we have
\[
 D_i=(Q\cap X_i)^x=Q\cap X_i^x.
\]
In particular, \(D_i\leq X_i\cap X_i^x\). If \(X_i^x\neq X_i\),
then \(X_i\) and \(X_i^x\) are distinct components and hence commute.
It follows that
\[
 X_i\cap X_i^x\leq\mathbf{Z}(X_i),
\]
and therefore \(D_i\leq\mathbf{Z}(X_i)\), a contradiction. Thus every
element of \(C_{\tilde G}(Q)\) normalizes \(X_i\). Finally, since Step 2 gives $\tilde G=C_{\tilde G}(Q)\mathbf{F}^*(G)$ and the subgroup
\(\mathbf{F}^*(G)\) also normalizes every component, \(X_i\trianglelefteq\tilde G\) for every \(i\), as required.

\medskip\noindent
\textbf{Step 5.} \(\tilde{P}\mathbf{E}(G)/\mathbf{E}(G)\) is a Sylow \(p\)-subgroup of \(\tilde{G}/\mathbf{E}(G)\) and
\(
\tilde{G}/\mathbf{E}(G) = \mathbf{O}_{p',\,p,\,p'}\bigl( \tilde{G}/\mathbf{E}(G) \bigr).
\)

\smallskip
By {Steps~1 and~2}, we have
$
 G=C_G(Q)\mathbf{F}^*(G)$ and
 $\mathbf{O}_{p'}(G)\leq\mathbf{Z}(G)$.
Moreover, we have \[\mathbf{O}_p(G)=P\cap \mathbf{F}(G)=\tilde P\cap \mathbf{F}(G)=\tilde P\cap {\mathbf{F}^{*}(G)}\cap \mathbf{F}(G)=Q\cap\mathbf{F}(G).\] Since \(P\) is abelian,
\(\mathbf{O}_p(G)\) centralizes \(\mathbf{F}(G)\) and hence
\(\mathbf{E}(G)\) because
\([\mathbf{F}(G),\mathbf{E}(G)]=1\). Consequently \(\mathbf{O}_p(G)\) centralizes
\(\mathbf{F}^*(G)\). Since \(\mathbf{O}_p(G)\) is contained in \(Q\), it is also
centralized by \(C_G(Q)\). Therefore we have
$\mathbf{O}_p(G)\leq\mathbf{Z}(G)$.
Together with Step~1, this implies
\(\mathbf{F}(G)=\mathbf{Z}(G)\) and hence
$\mathbf{F}^*(G)=\mathbf{Z}(G)\mathbf{E}(G)$.

By Step~4, every component of \(G\) is normal. Lemma~\ref{Solv} now
shows that \(G/\mathbf{F}^*(G)\) is solvable. Since
\(\mathbf{F}^*(G)/\mathbf{E}(G)\) is abelian, \(G/\mathbf{E}(G)\) is
solvable, hence \(\tilde G/\mathbf{E}(G)\) is solvable because
\(\tilde G/G\) is a \(p\)-group. For every characteristic subgroup
\(L\) of \(G\) containing \(\mathbf{E}(G)\), the block of \(L\)
covered by \(b\) is \(G\)-invariant by the third paragraph of the
proof. Lemma~\ref{Abel} applied with \(N=\mathbf{E}(G)\) therefore
gives $P\mathbf{E}(G)/\mathbf{E}(G)\in\operatorname{Syl}_p(G/\mathbf{E}(G))$.
Since \(\tilde G=\tilde P G\) and \(\tilde P\cap G=P\), we have $
 \tilde P\mathbf{E}(G)/\mathbf{E}(G)\in
 \operatorname{Syl}_p(\tilde G/\mathbf{E}(G))$. Hence \cite[Lemma~6.3.3]{Go} applied
to the solvable group \(\tilde G/\mathbf{E}(G)\) yields
$\tilde G/\mathbf{E}(G)=
 \mathbf{O}_{p',p,p'}(\tilde G/\mathbf{E}(G)).$

\medskip

Let \(\tilde T\) and \(\tilde M\) be the inverse images in \(\tilde
G\) of \(\mathbf{O}_{p',p}(\tilde G/\mathbf{E}(G))\) and
\(\mathbf{O}_{p'}(\tilde G/\mathbf{E}(G))\) respectively. The Sylow
statement just proved gives
\[
 \tilde T=\tilde P\tilde M,
 \qquad \tilde M/\mathbf{E}(G)\text{ a }p'\text{-group},
 \qquad \tilde G/\tilde T\text{ a }p'\text{-group}.
\]
By the second paragraph of the proof, \(b_{\mathbf{E}(G)}\) is $\tilde P$-invariant and hence it is also a block
of \(\tilde P\mathbf{E}(G)\) with defect group \(\tilde P\).
Since distinct components {commute}, $d_i$ is the unique block of $X_i$ covered by $b_{\mathbf{E}(G)}$. Consequently, \(d_i\) is
\(\tilde P\)-invariant and hence it is also a block of \(\tilde P X_i\) with
defect group \(\tilde P\).
Lemma~\ref{Normal-one-orbit2} gives that \(\IBr(d_i)\) is a
single \(\Aut(X_i)_{d_i}\)-orbit. Since \(X_i\) is
quasisimple, the hypothesis of Proposition~\ref{Reduction} applied to
\(X_i\trianglelefteq\tilde P X_i\) shows that \(d_i^{\tilde P X_i}\) is inertial. {By} \cite[Proposition~2.3]{ZZ1},
\((b_{\mathbf{E}(G)})^{\tilde P\mathbf{E}(G)}\) is inertial.

Let \(b_{\tilde T}\) be the block of \(\tilde T\) covered by \(b\).
Since \(\tilde T=\tilde P\tilde M\) and
\(\tilde M/\mathbf{E}(G)\) is a \(p'\)-group,
\cite[Corollary~1.4]{ZZ} implies that \(b_{\tilde T}\) is inertial.
Finally, since  \(\tilde G/\tilde T\) is a \(p'\)-group,
\cite[Corollary]{Zhou} implies that \(b^{\tilde G}\) is inertial. This contradicts the assumption and completes
the proof.
\end{proof}

\section{Blocks of quasisimple groups}\label{Sec:3}

Let \(\mathbf{G}\) be a connected reductive algebraic group defined over an algebraically closed field \(\overline{\mathbb{F}}_\ell\), where $\ell$ is a prime. Let \(q\) be a power of \(\ell\), let \(F: \mathbf{G} \to \mathbf{G}\) be a Frobenius endomorphism endowing \(\mathbf{G}\) with an \(\mathbb{F}_q\)-structure{, and let} \((\mathbf{G}^*, F)\) be {dual to} $({\mathbf G}, F)$ (see \cite[Page 118]{CE}). Set \(G = \mathbf{G}^F\) and \(G^* = {\mathbf{G}^{*F}}\).
The set \(\Irr(G)\) of irreducible ordinary characters of \(G\) {decomposes into the disjoint union} of Lusztig series
\[
\Irr(G) = \coprod_{(s)} \mathcal{E}(G, s),
\]
where \((s)\) runs over conjugacy classes of semisimple elements \(s \in G^*\). The characters in the series \(\mathcal{E}(G, 1)\) are called {unipotent characters}. For any block $b$ of $G$, there is a semisimple element $s$ such that \(\Irr(b) \cap \mathcal{E}(G, s) \neq \emptyset\), where $\Irr(b)$ denotes the set of all irreducible ordinary characters in the block $b$.
The block \(b\) of \(G\) is {unipotent} if \(s=1\). In what follows, we assume throughout that \(p \neq \ell\).

\begin{lem}\label{unipotentcentral} Assume that \(\mathbf{G}\) is semisimple and that $p$ divides \(|\mathbf{Z}(\mathbf{G})^F|\).  Then \(G=\mathbf{G}^F\) has no unipotent block with central defect group.
\end{lem}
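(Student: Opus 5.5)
The key observation is that the centre \(Z:=\mathbf{Z}(\mathbf{G})^F\), which is contained in \(\mathbf{Z}(G)\), acts trivially on unipotent characters. Indeed, every unipotent character \(\chi\in\mathcal{E}(G,1)\) is a constituent of some Deligne--Lusztig character \(R_{\mathbf T}^{\mathbf G}(1)\). The central elements of \(G\) lying in \(\mathbf{Z}(\mathbf G)^F\) act trivially on \(R_{\mathbf T}^{\mathbf G}(1)\), since its central character at \(z\) is given by \(\theta(z)=1\) for \(\theta=1\). Hence \(\mathbf{Z}(\mathbf G)^F\leq\ker\chi\) for all \(\chi\in\mathcal{E}(G,1)\).

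Now let \(b\) be a unipotent block of \(G\), and suppose that \(b\) has a central defect group \(D\leq\mathbf{Z}(G)\). The plan is to compare \(D\) with the Sylow \(p\)-subgroup \(Z_p\neq 1\) of \(Z\); this subgroup is nontrivial by hypothesis. Every block contains the central \(p\)-subgroup \(\mathbf{O}_p(\mathbf{Z}(G))\) in its defect groups, so \(Z_p\leq D\). A block with central defect group \(D\) is nilpotent. Concretely, \(b\) is Morita equivalent to \(\mathcal O D\), and its ordinary characters are precisely \(\chi\lambda\) for a fixed \(\chi\) and \(\lambda\in\Irr(D)\), extended suitably. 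In particular, \(\Irr(b)\) consists of \(|D|\) characters whose restrictions to \(D\) are multiples of the \(|D|\) distinct linear characters of \(D\).

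Since \(Z_p\neq1\), some character in \(\Irr(b)\) restricts to \(Z_p\) as a multiple of a nontrivial linear character, so \(Z_p\not\leq\ker\). More precisely, the central characters of \(\Irr(b)\) on \(D\) run over all of \(\Irr(D)\), and \(\Irr(D)\to\Irr(Z_p)\) is surjective. On the other hand, I will invoke Broué--Michel: a unipotent block has \(\Irr(b)\subseteq\bigcup_t\mathcal{E}(G,t)\) with \(t\) a \(p\)-element of \(G^*\). The characters in \(\mathcal{E}(G,t)\) restrict on \(\mathbf Z(\mathbf G)^F\) to the linear character \(\hat t\) attached to \(t\) via the duality \(\mathbf Z(\mathbf G)^F\leftrightarrow\) \(G^*/[\ldots]\). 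This alone is not a contradiction, so the argument has to be organised differently.

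The clean route is via Brauer characters. In a block with central defect group \(D\), there is a unique Brauer character \(\varphi\), and the ordinary characters are \(\{\chi\lambda\}\). The unipotent character \(\chi_0\in\Irr(b)\) is trivial on \(Z_p\). Take any \(\lambda\in\Irr(D)\) that is nontrivial on \(Z_p\). Then \(\chi_0\lambda\) is also in \(b\) with nontrivial restriction on \(Z_p\), and it lies in some \(\mathcal{E}(G,t)\) with \(t\) a \(p\)-element central-type. This is consistent, so again there is no contradiction. The real contradiction must therefore come from degrees or counting: \(|\Irr(b)|=|D|\) and all characters in \(\Irr(b)\) have the same degree \(\chi_0(1)\), which is forced since the block has defect zero modulo \(D\). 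I would then use that \(\mathcal{E}_p(G,1)\cap\Irr(b)\) contains characters in \(\mathcal{E}(G,t)\) for \(t\in\mathbf{Z}(G^*)\)-translates only if \(G^*\) has \(p\)-central elements. Since \(\mathbf G\) is semisimple with \(p\mid|\mathbf Z(\mathbf G)^F|\), the dual \(\mathbf G^*\) is adjoint and \(\mathbf{Z}(G^*)=1\). So the characters \(\chi_0\lambda\) with \(\lambda\) nontrivial on \(Z_p\) lie in \(\mathcal{E}(G,t)\) with \(t\neq1\) non-central. Their degrees are \(|G^*:C_{G^*}(t)|_{\ell'}\cdot\psi(1)\) with \(|G^*:C_{G^*}(t)|_{\ell'}\) divisible by \(p\), which contradicts the equality of degrees within a block of central defect, where all degrees have the same \(p\)-part \(|G|_p/|D|\). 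Making this last degree comparison rigorous, namely showing that \(p\mid|G^*:C_{G^*}(t)|\) yields a strictly larger \(p\)-part, is the main obstacle. I would try to handle it by noting that \(\chi(1)_p=|G:D|_p\) is already the maximum possible over \(\Irr(b)\), and then using \(|D|\geq|Z_p|\) together with the Jordan decomposition degree formula.
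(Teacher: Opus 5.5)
Your preliminary facts are correct: unipotent characters have $\mathbf Z(\mathbf G)^F$ in their kernel, $Z_p\le D$, and all characters of a block with central defect group $D$ have the same degree, with $p$-part exactly $|G|_p/|D|$. As you note yourself, these facts are not contradictory. The step meant to produce the contradiction fails on three counts.

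First, $\mathbf G$ is only assumed semisimple, so $\mathbf G^*$ need not be adjoint. For $\mathbf G=\mathrm{SO}_{2n}$ with $n\geq 3$, $q$ odd and $p=2$, the hypothesis holds and $\mathbf Z(\mathbf G^*)\neq 1$. At minimum you need a reduction to $\mathbf G$ simple and simply connected, as the paper does: pull $b$ back to $\mathbf G_{\rm sc}^F$, check that the defect group stays central and that $p$ still divides $|\mathbf Z(\mathbf G_{\rm sc})^F|$, then pass to one $F$-orbit of simple factors.

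Second, non-centrality of a $p$-element $t\in G^*$ does not give $p\mid |G^*:C_{G^*}(t)|$. Any $p$-element in the centre of a Sylow $p$-subgroup of $G^*$ has a centralizer of $p'$-index.

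Third, and decisively, even when $p$ divides that index nothing follows. For $\chi\in\Irr(b)\cap\mathcal E(G,t)$, the Jordan degree formula only says that the Jordan correspondent $\psi$ satisfies $\psi(1)_p=|C_{G^*}(t)|_p/|D|$. The obvious constraint on $\psi$ (it is trivial on the central element $t$) gives only $o(t)\le |D|$, which is perfectly consistent. Nothing forces $\chi(1)_p$ to be ``strictly larger''.

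What is missing is concrete information about unipotent characters or unipotent blocks themselves. Your setup reduces the lemma to showing that no unipotent $\chi_0$ satisfies $\chi_0(1)_p=|G|_p/|D|$ with $Z_p\le D\le \mathbf Z(G)$. That is exactly where the paper's case analysis sits.

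In classical types the paper shows the unipotent block must be principal: for $p=2$ via Cabanes--Enguehard, Theorem 21.14; for odd $p$ the centre condition forces type $A$, and Theorem 22.9 with Lemma 22.17 apply. Then $G$ would have a nontrivial central Sylow $p$-subgroup. This is impossible because $G$ is perfect, apart from a few small exceptions that are checked directly.

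In the exceptional cases $E_6$, ${}^2E_6$ (with $p=3$) and $E_7$ (with $p=2$), the explicit unipotent degree formulas give $\chi(1)_p<|G|_p/p$. On the other hand, lying in a block with central defect group $D\le \mathbf Z(G)$, where $|\mathbf Z(G)|=p$, forces $\chi(1)_p\ge |G|_p/p$.

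Without an argument of this kind about the unipotent characters themselves, your proposal does not prove the lemma.
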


\begin{proof}
Suppose that $b$ is a unipotent block of $G$ with central defect group $D$.
Let $\pi:\mathbf G_{\mathrm{sc}}\to\mathbf G$ be the simply connected
covering. By \cite[Proposition~22.7]{MT}, $F$ lifts to
$\mathbf G_{\mathrm{sc}}$. Set $G_{\mathrm{sc}}=\mathbf G_{\mathrm{sc}}^F$
and $H=\pi(G_{\mathrm{sc}})\trianglelefteq G$.
Choose a unipotent character $\chi\in\Irr(b)$. By
\cite[Proposition~15.9]{CE}, $\chi\circ\pi$ is an irreducible unipotent
character of $G_{\mathrm{sc}}$. Let $b_{\mathrm{sc}}$ be the block containing $\chi\circ\pi$. Note that $\chi\circ\pi$ is the inflation of $\chi_H$ along the homomorphism $\pi: G_{\mathrm{sc}}\rightarrow H$ and hence that $\chi_H$ is irreducible.
The block of $H$ containing $\chi_H$ is covered by $b$ and dominated
by $b_{\mathrm{sc}}$. By
\cite[Theorems~9.9(c), 9.10 and~9.26]{N1}, we may choose a defect group
$D_{\mathrm{sc}}$ of $b_{\mathrm{sc}}$ such that
$\pi(D_{\mathrm{sc}})=D\cap H$.
By \cite[Proposition~3.6.8]{C}, $D\leq\mathbf Z(\mathbf G)^F$.
Since $\pi^{-1}(\mathbf Z(\mathbf G))=\mathbf Z(\mathbf G_{\mathrm{sc}})$,
it follows that $D_{\mathrm{sc}}$ is central.

We also have $p\mid|\mathbf Z(\mathbf G_{\mathrm{sc}})^F|$.
Indeed, $\pi$ induces an $F$-equivariant epimorphism
$$A=\mathbf Z(\mathbf G_{\mathrm{sc}})_p\to
B=\mathbf Z(\mathbf G)_p$$ between finite abelian $p$-groups.
If $A^F=1$, then $F-1$ is bijective on $A$, on
the invariant kernel of $A\to B$ and hence on $B$.
This contradicts $B^F\neq1$.

Now $\mathbf G_{\mathrm{sc}}$ is a direct product of simply connected
simple groups. Taking representatives $\mathbf S_1,\ldots,\mathbf S_t$
of the $F$-orbits on its components gives
(see \cite[Corollary~1.5.16]{GM})
\[
 G_{\mathrm{sc}}\cong\prod_{i=1}^t S_i,
 \qquad S_i=\mathbf S_i^{F^{m_i}},
\]
where $m_i$ is the size of the orbit of $\mathbf S_i$.
Under this identification, write $b_{\mathrm{sc}}=b_1\otimes\cdots\otimes b_t$.
Each $b_i$ is unipotent with central defect group. Since
$\mathbf Z(\mathbf G_{\mathrm{sc}})^F\cong
\prod_i\mathbf Z(\mathbf S_i)^{F^{m_i}}$, at least one factor has center
of order divisible by $p$. Replacing $\mathbf G$, $F$ and $b$ by that
factor, its corresponding Frobenius endomorphism and its block, we may
therefore assume that $\mathbf G$ is simple and simply connected.

Suppose first that $\mathbf{G}$ is of classical type. We claim that $b$ is the principal block. If $p=2$, this follows from \cite[Theorem 21.14]{CE}. If $p$ is odd, since $p\mid|\mathbf{Z}(\mathbf{G})^F|$, inspection of \(|\mathbf{Z}(\mathbf{G})^F|\) (see \cite[Table~24.2]{MT}) shows that $\mathbf{G}$ must be of type $A$. Then by \cite[Theorem 22.9 and Lemma 22.17]{CE}, the block $b$ is principal. {This proves the claim.} Consequently $G$ has {a} nontrivial central Sylow $p$-subgroup and is not perfect.
By \cite[Theorem 24.17]{MT}, $G$ is one of the following exceptions
$$\mathrm{SL}_2(2),~\mathrm{SL}_2(3),~\mathrm{SU}_3(2),~\mathrm{Sp}_4(2), ~\mathrm{G}_2(2).$$
However, for every group {in this list}, either its center is trivial or its Sylow $p$-subgroups are not contained in the center, yielding a contradiction.
	
It remains to consider the exceptional types. Since
$p\mid|\mathbf{Z}(\mathbf{G})^F|$, inspection of the centers
(see \cite[Table~24.2]{MT}) leaves precisely the following three
possibilities. Writing
$\Phi_d=\Phi_d(q)$, the standard order formulas and
\cite[Lemma~5.2]{M2} give
\[
\begin{array}{c|c|c}
G & |{\bf Z}(G)| & |G|_p \\
\hline
E_{6,\mathrm{sc}}(q),\ 3\mid(q-1)
  & 3 & (\Phi_1^6\Phi_3^3\Phi_9)_3 \\
{}^2E_{6,\mathrm{sc}}(q),\ 3\mid(q+1)
  & 3 & (\Phi_2^6\Phi_6^3\Phi_{18})_3 \\
E_{7,\mathrm{sc}}(q)
  & 2 & (\Phi_1^7\Phi_2^7\Phi_4^2\Phi_8)_2
\end{array}
\]
where $p=3$ in the first two rows and $p=2$ in the last row. Comparing
these $p$-parts with the unipotent character degree formulas in
\cite[Page~480-483]{C} uniformly gives
\(
 \chi(1)_p<\frac{|G|_p}{p}
\)
for every unipotent character $\chi$ of $G$.

Suppose that $\chi$ belongs to a block with central defect
group $D$. Then the definition of the height $h(\chi)$ gives
\(
 \chi(1)_p=\frac{|G|_p}{|D|}p^{h(\chi)}.
\)
Since $D\leq {\bf Z}(G)$ and $|{\bf Z}(G)|=p$ in all three cases, the right-hand
side is at least $|G|_p/p$, a contradiction. Thus no unipotent
character in any of the remaining cases lies in a block with central
defect group, and the proof is complete.
\end{proof}

\begin{lem}\label{unipotentdegree}
Let \(b\) be a unipotent block of \(G = \mathbf{G}^F\). Then either \(b\) has {a} central defect group or \(b\) contains at least two unipotent characters whose degrees have distinct \(\ell\)-parts.
\end{lem}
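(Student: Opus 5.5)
Since $p\neq\ell$, the unipotent characters of a unipotent block $b$ are governed by $d$-Harish-Chandra theory, where $d$ is the order of $q$ modulo $p$ (modulo $4$ when $p=2$). The plan is to use the Broué--Malle--Michel and Cabanes--Enguehard description: $\Irr(b)\cap\cE(G,1)$ is the $d$-Harish-Chandra series $\cE(G,(\mathbf L,\lambda))$ above a $d$-cuspidal pair $(\mathbf L,\lambda)$. Here $\mathbf L=C_{\mathbf G}(\mathbf Z(\mathbf L)^F_p)$ is a $d$-split Levi subgroup, and a defect group $D$ of $b$ contains $\mathbf Z(\mathbf L)^F_p$ as a normal subgroup with $C_D(\mathbf Z(\mathbf L)^F_p)=\mathbf Z(\mathbf L)^F_p$ (see \cite[Theorems 22.9, 21.7]{CE}). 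For bad primes $p$ and for $p=2$ we would invoke the corresponding results of Enguehard and the $p=2$ analysis in \cite{CE}.

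\textbf{Step 1: the two cases.} If $\mathbf L=\mathbf G$, then $\lambda$ is $d$-cuspidal for $G$ itself, the series consists of $\lambda$ alone, and $D$ is contained in $\mathbf Z(\mathbf G)^F$. In that case $b$ has central defect group and we are done. So assume $\mathbf L<\mathbf G$, equivalently that the Sylow $d$-torus $\mathbf Z^\circ(\mathbf L)_{\Phi_d}$ is not central in $\mathbf G$.

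\textbf{Step 2: produce two unipotent characters.} The characters in $\cE(G,(\mathbf L,\lambda))$ are in bijection with $\Irr(W_G(\mathbf L,\lambda))$, where $W_G(\mathbf L,\lambda)$ is the relative Weyl group (a complex reflection group). This group is nontrivial once $\mathbf L<\mathbf G$, since it contains the reflections of the $d$-cyclotomic Weyl group. Hence the series has at least two members, and in particular it contains the constituent $\chi_1$ corresponding to the trivial character of $W_G(\mathbf L,\lambda)$.

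\textbf{Step 3: compare $\ell$-parts.} For a unipotent character $\chi$, write $\chi(1)=\frac{1}{n_\chi}q^{a_\chi}\cdot(\text{product of cyclotomic polynomials in }q)$. Since $n_\chi$ is a product of bad primes and $p\ne\ell$, the $\ell$-part of $\chi(1)$ is $q^{a_\chi}$, up to the $\ell$-part of $n_\chi$, which requires care when $\ell$ is bad. It therefore suffices to find $\chi,\chi'$ in the series with $a_\chi\neq a_{\chi'}$. By the generic degree formula of Broué--Malle--Michel, $\mathrm{Deg}\,R_{\mathbf L}^{\mathbf G}(\lambda)$ splits according to the characters $\phi$ of the cyclotomic Hecke algebra attached to $W_G(\mathbf L,\lambda)$. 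The Schur elements of the trivial and sign-like characters then give degrees differing by a power of $q$. Concretely, the two characters $\chi_{\mathrm{triv}},\chi_{\mathrm{sgn}}$ have $a$-values differing by the sum over reflections of the parameter exponents, which is positive. The fallback, if a uniform argument proves awkward, is case-by-case. For classical groups we would use symbols and $d$-cores/cocores: moving one $d$-hook changes $a$ strictly, for instance by comparing the $a$-function along the dominance order on the $d$-quotient. For exceptional groups we would inspect the tables of \cite{C} and the $d$-Harish-Chandra series tables of Broué--Malle--Michel.

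\textbf{Main obstacle.} Step 3 is the hard part: showing that the $q$-power exponents really differ within every non-central series. Two complications arise here. First, the Schur elements can share $q$-powers for particular pairs, so one must pick the pair carefully (trivial versus sign). Second, the $n_\chi$ factors and possibly disconnected-centre issues could interfere when $\ell\in\{2,3,5\}$. I would first attempt the uniform argument via Hecke algebra parameters. If it fails, I would reduce via \cite[Proposition~15.9]{CE} to simple simply connected $\mathbf G$, since unipotent degrees are insensitive to isogeny and products behave multiplicatively, and then verify the claim type by type.
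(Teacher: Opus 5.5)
\textbf{Genuine gap.} Your Step~3 carries the whole content of the lemma, and it is only asserted.

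\emph{The Hecke-algebra claim.} You state that the ``trivial'' and ``sign-like'' characters of the relative Hecke algebra of $W_G(\mathbf L,\lambda)$ have Schur elements of different $q$-valuation (``the sum over reflections of the parameter exponents, which is positive''), but you give no argument. For $d\neq1$ the group $W_G(\mathbf L,\lambda)$ is a complex reflection group with parameters of the form $\zeta q^{m}$, and there is no canonical sign character. The Schur-element description of the degrees in a $d$-series itself rests on case-by-case verification, and you concede that the pair must be chosen with care. What remains (a uniform argument, else symbols and tables type by type) is a programme, not a proof.

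\emph{The target is not quite right.} Every $\Phi_e(q)$ is prime to $\ell$, so $\chi(1)_\ell=q^{a_\chi}/(n_\chi)_\ell$. When $\ell$ is bad, a difference of $a$-values can be absorbed by $(n_\chi)_\ell$, so $a_\chi\neq a_{\chi'}$ does not by itself give distinct $\ell$-parts.

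\emph{The framework does not cover all primes.} Steps~1--2 rest on describing $\Irr(b)\cap\cE(G,1)$ as a single $d$-Harish-Chandra series \cite[Theorem~22.9]{CE}, which requires $p$ odd and good. For $p=2$ this fails: for classical types all unipotent characters lie in the principal block by \cite[Theorem~21.14]{CE}. For bad $p$ a unipotent block can also contain several $e$-series. So your case split and your choice of $\chi_{\mathrm{triv}},\chi_{\mathrm{sgn}}$ do not apply as stated, and ``we would invoke'' leaves these cases open.

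\textbf{What the paper does instead.} It passes to the simply connected covering $\mathbf G_{\mathrm{sc}}$ of $[\mathbf G,\mathbf G]$. There, restriction gives a degree-preserving bijection of unipotent characters compatible with blocks \cite[Proposition~15.9, Theorem~17.1]{CE}. If $b_{\mathrm{sc}}$ has non-central defect, the paper simply cites \cite[Lemma~3.15]{MNST}, which is precisely your Step~3 for semisimple simply connected groups and all $p\neq\ell$.

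The remaining case, $b_{\mathrm{sc}}$ of central defect, is handled by Lemma~\ref{unipotentcentral}. It forces $p\nmid|\mathbf Z(G_{\mathrm{sc}})|$, so $b_{\mathrm{sc}}$ has defect zero. Let $Z_0$ be the Sylow $p$-subgroup of $(\mathbf Z(\mathbf G)^{\circ})^F$. Then $\chi(1)_p=|G_{\mathrm{sc}}|_p=|G:Z_0|_p$, and \cite[Theorem~9.13]{N1} shows that $Z_0$ is a central defect group of $b$.

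To close your argument you would need either a genuine proof of Step~3 that handles the $n_\chi$ issue and the bad-prime and $p=2$ block descriptions, or, more economically, this reduction together with \cite[Lemma~3.15]{MNST}.
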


\begin{proof} Let $\mathbf G_{\mathrm{sc}}$ be the simply connected
covering of $[\mathbf G,\mathbf G]$ and let
$\pi:\mathbf G_{\mathrm{sc}}\to\mathbf G$ be the resulting central
isotypy (see \cite[Remark~1.5.13]{GM}).
The Frobenius endomorphism \( F \) on \( \mathbf{G} \) lifts to a Frobenius endomorphism on \( \mathbf{G}_{\rm sc}\), still denoted by $F$. Set $G_{\rm sc}=\mathbf{G}_{\rm sc}^F$.
By \cite[Proposition 15.9]{CE},
the restriction of characters induces a bijective correspondence between the unipotent characters of $G_{\rm sc}$ and those of $G$.
This correspondence preserves character degrees.
Moreover, by \cite[Theorem 17.1]{CE}, the bijective correspondence is compatible with the block decomposition of characters. Let $b_{\rm sc}$ be the block of $G_{\rm sc}$ corresponding to the block $b$. The block $b_{\rm sc}$ is unipotent.

Suppose first that \(b_{\mathrm{sc}}\) does not have a central defect group.
Since $\mathbf G_{\mathrm{sc}}$ is semisimple, quasi-central
defect and central defect of characters coincide.
By \cite[Lemma 3.15]{MNST}, there exist two unipotent characters in \(\Irr(b_{\mathrm{sc}})\) whose degrees have distinct \(\ell\)-parts. The unipotent character bijection above then implies that \(\Irr(b)\) also contains such a pair.

Suppose that $b_{\mathrm{sc}}$ has central defect group.
Lemma~\ref{unipotentcentral} and \cite[Proposition~3.6.8]{C} imply
$p\nmid|{\bf Z}(G_{\mathrm{sc}})|$, so $b_{\mathrm{sc}}$ has defect zero.
Choose a unipotent character $\chi\in\Irr(b)$ and let $Z_0$ be the
Sylow $p$-subgroup of $(\mathbf Z(\mathbf G)^{\circ})^F$.
By \cite[1.5.11]{GM}, we have
\(
 \chi(1)_p=|G_{\mathrm{sc}}|_p=|G:Z_0|_p.
\)
Since $Z_0\leq {\bf Z}(G)$, \cite[Theorem~9.13]{N1} shows that $Z_0$ is a
defect group of $b$, as required.
\end{proof}

For a semisimple \(p'\)-element \(s \in G^*\), set \(\mathcal{E}_p(G,s) = \bigcup_t \mathcal{E}(G,st)\), where \(t\) runs over the \(p\)-elements of \(G^*\) commuting with \(s\).  Then \(\cE_p(G,s)\) is a union of sets of
irreducible complex characters of blocks of \(G\). For any block \(b\) of \(G\) with \(\Irr(b) \subseteq \mathcal{E}_p(G,s)\), we have \(\Irr(b) \cap \mathcal{E}(G,s) \neq \emptyset\) (see \cite[Theorem 9.12]{CE}). In this case, \(b\) is said to lie in \(\mathcal{E}_p(G,s)\).

From now on, we assume that \(\mathbf{G}\) is simple of simply connected type. Let \(\iota: \mathbf{G} \hookrightarrow \tilde{\mathbf{G}}\) be a regular embedding with dual map \(\iota^*: \tilde{\mathbf{G}}^* \to \mathbf{G}^*\) (see \cite[15.1]{CE}). Identifying \(\mathbf{G}\) with \(\iota(\mathbf{G})\), we have \(\tilde{\mathbf{G}} = \mathbf{G} \mathbf{Z}(\tilde{\mathbf{G}})\). The Frobenius endomorphism \(F\) on \(\mathbf{G}\) {extends to a Frobenius endomorphism of} \(\tilde{\mathbf{G}}\), still denoted by \(F\). Set \(\tilde{G}= \tilde{\mathbf{G}}^F\) and \(\tilde{G}^*= \tilde{\mathbf{G}}^{*F}\).
We record \cite[Theorem~1]{KM2} in the notation used here.

\begin{lem}\label{Jordan}
Let \(\mathbf X\) be a connected reductive group in characteristic
\(\ell\) with connected center and with simple, simply connected derived
subgroup, and let \(F:\mathbf X\to\mathbf X\) be a Frobenius
endomorphism. Let \(\mathbf H\) be an \(F\)-stable Levi subgroup of
\(\mathbf X\) and \(s\in\mathbf H^{*F}\)
be a semisimple \(p'\)-element and
set \(H=\mathbf H^F\). Then, for every \(p\)-element
\(t\in C_{\mathbf H^*}(s)^F\), there exists a map
\[
 {J}_{t}^{\mathbf H}:
 \bigl\{\text{unipotent \(p\)-blocks of }
 C_{\mathbf H^*}(st)^F\bigr\}
 \longrightarrow
 \bigl\{\text{\(t\)-twin \(p\)-blocks in }\mathcal E_p(H,s)\bigr\}
\]
such that if \(d\) is a unipotent \(p\)-block of
\(C_{\mathbf H^*}(st)^F\) and
\(
 \eta\in
 \mathcal E\bigl(C_{\mathbf H^*}(st)^F,1\bigr)\cap\Irr(d),
\)
then the character in \(\mathcal E(H,st)\) corresponding to \(\eta\)
under Jordan decomposition belongs to \({J}_{t}^{\mathbf H}(d)\).
\end{lem}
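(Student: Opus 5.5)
The plan is to deduce the statement directly from \cite[Theorem~1]{KM2}. The work consists of checking that its hypotheses hold for the pair $(\mathbf H,F)$ and translating its conclusion into the notation used here. No new block-theoretic argument should be needed beyond this bookkeeping.

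\textbf{Step 1: hypotheses on $\mathbf H$.} Since $\mathbf X$ has connected center, every $F$-stable Levi subgroup $\mathbf H$ of $\mathbf X$ also has connected center, because $\mathbf Z(\mathbf H)/\mathbf Z(\mathbf X)^\circ$ is controlled by the character lattice and remains torsion free. Dually, centralizers of semisimple elements in $\mathbf H^*$ are connected. In particular, for every $p$-element $t\in C_{\mathbf H^*}(s)^F$ the group $C_{\mathbf H^*}(st)$ is a connected reductive group with Frobenius $F$. Lusztig's Jordan decomposition then gives a bijection
\[
\mathcal E(H,st)\longleftrightarrow\mathcal E\bigl(C_{\mathbf H^*}(st)^F,1\bigr),
\]
which is unique up to the usual normalisation in the connected-center case. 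This is exactly the setting in which \cite[Theorem~1]{KM2} is formulated.

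\textbf{Step 2: defining $J_t^{\mathbf H}$.} For a unipotent $p$-block $d$ of $C_{\mathbf H^*}(st)^F$, \cite[Theorem~1]{KM2} asserts that the Jordan correspondents in $\mathcal E(H,st)$ of the unipotent characters of $d$ all lie in a single $p$-block of $H$. I would define $J_t^{\mathbf H}(d)$ to be that block. Since $s$ is a $p'$-element and $t$ is a $p$-element commuting with $s$, this block lies in $\mathcal E_p(H,s)$. It is a $t$-twin block in the sense of \cite{KM2}, because it is by definition attached to $t$ through this Jordan correspondence. With this definition, the stated property holds: whenever $\eta\in\mathcal E(C_{\mathbf H^*}(st)^F,1)\cap\Irr(d)$, its Jordan correspondent belongs to $J_t^{\mathbf H}(d)$.

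\textbf{Main obstacle: well-definedness.} The substantive input is that unipotent characters in one block $d$ are never sent to different blocks of $H$. In \cite{KM2} this is proved as follows. The $p$-part $t$ is first reduced away via Broué--Michel and Bonnafé--Rouquier, which pass from $\mathcal E_p(H,s)$ to $\mathcal E_p(C_{\mathbf H^*}(s)^{F},1)$-type data through a Levi subgroup in duality with the minimal Levi containing $C_{\mathbf H^*}(s)$. The unipotent blocks are then parametrised by $e$-cuspidal pairs as in Cabanes--Enguehard, and one checks that Jordan decomposition is compatible with this parametrisation.

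I would not reprove this. The only point I would verify is the compatibility of normalisations: that the Jordan decomposition used in \cite{KM2} is the one intended here for each $t$. In the connected-center situation this is automatic up to the unipotent-character automorphisms, which preserve unipotent blocks and hence do not change $J_t^{\mathbf H}(d)$.
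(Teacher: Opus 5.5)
Your route is the paper's: the lemma is simply \cite[Theorem~1]{KM2} restated, and your Step~1 check of the hypotheses (connected centre of $\mathbf H$, connectedness of $C_{\mathbf H^*}(st)$) is fine. The problem is Step~2, where you misstate what that theorem provides. \cite[Theorem~1]{KM2} does not say that the Jordan correspondents of the unipotent characters of $d$ all lie in one $p$-block of $H$. Its values are $t$-twin blocks in the sense of \cite[Section~4.2]{KM2}. As the paper uses them, these are sums of blocks that may consist of more than one block. Such non-trivial twins occur in certain configurations involving a component of type $E_8$ (see \cite[Table~1]{KM2}).

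So your definition of $J_t^{\mathbf H}(d)$ as ``that block'' is not available in the twin cases. Your justification that it is a $t$-twin block ``because it is by definition attached to $t$'' treats the twin condition as a label rather than as the genuine weakening it is. The paper depends on this distinction. Lemma~\ref{Twin} exists precisely to show that, under the single-orbit hypothesis, no non-trivial $1$-twin contains $b$. Only then does it get $J_1^{\mathbf G}(f)=b$, so that all Jordan correspondents of unipotent characters of $f$ land in $b$. The same step ($J_1^{\tilde{\mathbf L}}(f_\chi)=\tilde b_{\tilde{\mathbf L}^F}$) is used in the proof of Lemma~\ref{Good}. With your single-block version those arguments would be superfluous.

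Your last paragraph has the same problem. You claim that the normalisation ambiguities of Jordan decomposition preserve the unipotent blocks of $C_{\mathbf H^*}(st)^F$, and hence cannot affect $J_t^{\mathbf H}(d)$. This is unsupported, and it is exactly what cannot be guaranteed in the twin configurations; that is why \cite{KM2} only obtains membership in a $t$-twin block.

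The repair is simple. Take $J_t^{\mathbf H}$ to be the twin-valued map of \cite[Theorem~1]{KM2} itself. Then assert only that the Jordan correspondent of $\eta$ lies in $J_t^{\mathbf H}(d)$, which is all the lemma claims, without upgrading this to a single block.
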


Here \(t\)-twin blocks are defined in
\cite[Section~4.2]{KM2}.


\begin{lem}\label{Twin}
Assume that \(\mathbf G\) has type \(E_8\) and \(p\geq7\).
Let \(s\in G^*\) be a semisimple \(p'\)-element, let \(b\) be a
block lying in \(\mathcal E_p(G,s)\), and set
\(\mathbf C=C_{\mathbf G^*}(s)\).
If \(\IBr(b)\) is a single \(\Aut(G)_b\)-orbit, then no
non-trivial \(1\)-twin block contains \(b\).
In particular, let \(f\) be a unipotent block of \(\mathbf C^F\).
If the Jordan correspondent of some unipotent character in \(f\)
lies in \(\Irr(b)\cap\mathcal E(G,s)\), then the Jordan
correspondents of all unipotent characters in \(f\) lie in this set.
\end{lem}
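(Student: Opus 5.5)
We use the description of \(1\)-twin blocks from \cite[Section~4.2]{KM2}. For \(E_8\) the centre is trivial, so \(\tilde G=G\) and the regular embedding is an isomorphism. Jordan decomposition in \(\mathcal E(G,s)\) is then a single bijection \(\mathcal E(G,s)\leftrightarrow\mathcal E(\mathbf C^F,1)\), because \(\mathbf C=C_{\mathbf G^*}(s)\) is connected. The \(1\)-twin blocks are by construction the images \(J^{\mathbf G}_1(f)\) of unipotent blocks \(f\) of \(\mathbf C^F\) under Lemma~\ref{Jordan} with \(t=1\). A nontrivial \(1\)-twin block is one where this image is a union of several \(p\)-blocks of \(G\). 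This happens precisely when the Bonnaf\'e--Rouquier/Jordan correspondence for \(f\) does not come from a single block, which for \(E_8\) with \(p\ge7\) can only occur through the exceptional configurations listed in \cite{KM2}. In those configurations \(\mathbf C\) has a factor of type \(E_6\) or \(E_7\) with a cuspidal pair \(d\)-split Levi, the \(E_8\)-specific failure of the usual block parametrisation.

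The argument is by contradiction. Suppose \(b\) is contained in a nontrivial \(1\)-twin block \(J^{\mathbf G}_1(f)=b\cup b'\cup\cdots\).

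First, use \cite{KM2} to identify the twin constituents \(b,b'\). They share the same ordinary characters in \(\mathcal E(G,s)\) up to a splitting that the Jordan correspondence cannot see. Concretely, they are the blocks corresponding to two \(e\)-cuspidal pairs \((\mathbf L,\lambda)\) and \((\mathbf L,\lambda')\) with the same \(\mathbf L\). Here \(\lambda,\lambda'\) are the Jordan correspondents of two unipotent characters of a factor of \(C_{\mathbf L^*}(s)\).

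Second, exploit transitivity on \(\IBr(b)\). Since \(\Aut(G)\) for \(E_8\) consists of inner and field automorphisms, and field automorphisms fix unipotent characters and \(e\)-cuspidal data of unipotent type up to Galois action, every \(\varphi\in\IBr(b)\) has the same degree and the same behaviour under \(\Aut(G)_b\). In particular, all irreducible Brauer characters of \(b\) have equal degree. Using a unitriangular basic set for \(\mathcal E_p(G,s)\) (valid for \(p\ge7\) good), the number \(|\IBr(b)|\) equals the number of characters of \(\mathcal E(G,s)\cap\Irr(b)\). Their decomposition matrix is lower unitriangular, so the equal-degree condition forces the characters of \(\mathcal E(G,s)\cap\Irr(b)\) to be permuted transitively by \(\Aut(G)_b\). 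Hence their degrees coincide, and therefore so do the degrees of the corresponding unipotent characters of \(\mathbf C^F\).

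Third, compare with the explicit unipotent characters in the relevant twin situation. These occur in the Harish-Chandra series of \(E_6[\theta]\) or \(E_7[\pm\xi]\) inside \(\mathbf C\), using the degree tables in \cite{C}. In each such series the unipotent characters have distinct \(\ell\)-parts of degree, as in Lemma~\ref{unipotentdegree}. The set \(\Irr(b)\cap\mathcal E(G,s)\) contains at least two of them, unless \(b\) has central, hence trivial, defect. In the trivial-defect case \(J_1(f)\) is a single block, which is again a contradiction. This shows that \(b\) lies in no nontrivial \(1\)-twin block.

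The final assertion then follows directly. The \(1\)-twin block containing \(b\) is \(J_1(f)=b\) itself, and Lemma~\ref{Jordan} places the Jordan correspondents of all unipotent characters of \(f\) in \(J_1(f)\). Since the Jordan correspondent of one unipotent character of \(f\) lies in \(\Irr(b)\cap\mathcal E(G,s)\), all of them lie in this set.

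The main obstacle is the first step, namely extracting from \cite{KM2} the precise list of nontrivial twins for \(E_8\), \(p\ge7\), and checking in each case that the degrees really separate.
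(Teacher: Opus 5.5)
Your outline matches the paper's: equal degrees on $\Irr(b)\cap\mathcal E(G,s)$, then a contradiction with Lemma~\ref{unipotentdegree} in the twin configurations, then the final assertion from Lemma~\ref{Jordan}. Your last paragraph is fine. However, the decisive third step is not carried out, and what you write there does not hold up.

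\textbf{Non-central defect.} You need a block with non-central defect group, and you dispose of the other case by asserting that in the trivial-defect case $J_1(f)$ is a single block. Nothing supports this. The missing fact is the following.
\begin{itemize}
\item Non-trivial $1$-twins only occur for $p\mid q+1$, and they come from the unipotent $2$-cuspidal pairs of \cite[Table~1]{KM2}.
\item The Levi subgroup of such a pair has rational type $\Phi_2^2.{}^2E_6(q)$ or $\Phi_2.E_7(q)$. Its connected centre therefore has $p$-part $(q+1)_p^a>1$ with $a\in\{1,2\}$.
\item By \cite[Theorem~22.9(ii)]{CE}, the defect group of the labelled block contains this non-trivial $p$-group.
\end{itemize}
Moreover, for $s\neq1$ the block $b$ is not unipotent. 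So Lemma~\ref{unipotentdegree} has to be applied to the unipotent block $d$ of $\mathbf C^F$ labelling $b$, and ``central'' then refers to $\mathbf Z(\mathbf C^F)$, not to $\mathbf Z(G)=1$. You must check that $p\nmid|\mathbf Z(\mathbf C^F)|$. This holds because $\mathbf C^F$ has type ${}^2E_6(q).{}^2A_2(q)$ or $E_7(q).A_1(q)$, so $|\mathbf Z(\mathbf C^F)|$ divides $4$ or $9$, while $p\geq7$.

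\textbf{Getting two degrees into $b$ itself.} This is the more serious gap. Your assertion that $\Irr(b)\cap\mathcal E(G,s)$ contains two characters with distinct $\ell$-parts is precisely what must be proved. Under the twin hypothesis, Lemma~\ref{Jordan} only places the Jordan correspondents of the unipotent characters of $f$ in the union $J_1(f)=b\cup b'\cup\cdots$. So a priori the characters of different degrees might lie in different members of the twin, and $b$ itself could receive characters of a single degree.

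The paper closes this for $s\neq1$ in two moves. It chooses the $2$-cuspidal pair whose block $d$ of $\mathbf C^F$ is sent to $b$ itself by \cite[Proposition~4.1(b)]{KM2}. It then invokes \cite[Proposition~4.10]{KM2}: the bijection from the series defining $b$ to the $2$-Harish-Chandra series of $d$ agrees in degrees with Jordan decomposition. Hence $\eta_1,\eta_2\in\Irr(d)$ with different $\ell$-parts give $\chi_1,\chi_2\in\Irr(b)\cap\mathcal E(G,s)$ with $\chi_1(1)_\ell\neq\chi_2(1)_\ell$. For $s=1$ this is unnecessary, since Lemma~\ref{unipotentdegree} applies to $b$ directly. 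Some ingredient of this kind is indispensable.

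\textbf{A smaller point.} In your second step, equal Brauer degrees plus unitriangularity do not by themselves force transitivity on $\Irr(b)\cap\mathcal E(G,s)$. What is needed is that this set is an $\Aut(G)_b$-stable basic set; stability under field automorphisms needs its own argument. Then its permutation module is isomorphic to the one on $\IBr(b)$ and has a single orbit, as in \cite[Lemma~3.2]{MNS}.
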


\begin{proof}
Since \(p\) is good for \(\mathbf G\), by \cite[Theorem~14.4]{CE} and the proof of
\cite[Lemma~3.5]{MNS}, the set
\(\Irr(b)\cap\mathcal E(G,s)\) is an \(\Aut(G)_b\)-stable basic
set.  Hence \cite[Lemma~3.2]{MNS} implies that
all its characters in $\Irr(b)\cap\mathcal E(G,s)$ have the same degree.

Suppose that \(b\) belongs to a non-trivial \(1\)-twin block.
Then \(p\mid(q+1)\) and the relevant cuspidal pairs are those in
\cite[Table~1]{KM2}. If \(s=1\), then \(b\) is unipotent.
By the definition of a non-trivial \(1\)-twin in
\cite[Section~4.2]{KM2}, we may choose a unipotent
\(2\)-cuspidal pair \((\mathbf M^*,\lambda^*)\) from \cite[Table~1]{KM2}
such that the block it labels has the image \(b\) under
the map of \cite[Proposition~4.1 (b)]{KM2}.
Let \((\mathbf L,\mu)\) be the pair corresponding to
\((\mathbf M^*,\lambda^*)\) under the bijection in \cite[Proposition~2.2]{KM2}.
Since \(t=1\), the relationship \(\mathord{\to}_1\) in
\cite[Definition 3.7]{KM2} is the identity map. Hence the
construction in Proposition~4.1 shows that
the image of \((\mathbf L, \mu)\) under the bijection in \cite[Theorem~22.9(ii)]{CE} is equal to \(b\).
Moreover, since \(s=1\), the Levi subgroup \(\mathbf M^*\) is
\(2\)-split in \(\mathbf G^*\), so we have
\(C_{\mathbf G^*}({\bf Z}^\circ(\mathbf M^*)_2)=\mathbf M^*\).
Thus \(\mathbf L\) is dual to \(\mathbf M^*\) (see \cite[Remark~4.2]{KM2}).
Under the non-trivial twin assumption, \cite[Table~1]{KM2} gives the rational
type \(\Phi_2^2.{}^2E_6(q)\) or \(\Phi_2.E_7(q)\) for
\(\mathbf M^{*F}\); see also \cite[Lemma~6.10(b),(c)]{KM2}.
Here the factors \(\Phi_2^2\) and \(\Phi_2\) record the
polynomial order of \({\bf Z}^\circ(\mathbf M^*)\), respectively
(see \cite[Theorem~13.1 and Proposition~13.2(i)]{CE}).
So we have
\(
 |{\bf Z}^\circ(\mathbf L)^F|
   =\Phi_2(q)^a=(q+1)^a>1,\, a\in\{1,2\}.
\)
Since \(p\mid(q+1)\), we also have
\(|{\bf Z}^\circ(\mathbf L)^F|_p>1\).
By \cite[Theorem~22.9(ii)]{CE}, a defect group of \(b\) contains
a Sylow \(p\)-subgroup of \({\bf Z}^\circ(\mathbf L)^F\), and
is therefore non-trivial. Since
\({\bf Z}(G)=1\), this defect group is not central.  Lemma~\ref{unipotentdegree}
therefore contradicts the equal degrees of the unipotent characters in \(b\).

Assume \(s\ne1\).
Clearly $s$ is not central in $\mathbf G^*$. Since \(\mathbf G^*\) is simply connected,  by \cite[Theorem~13.14]{CE}, \(\mathbf C\) is
connected.
By \cite[Table~1]{KM2}, \(\mathbf C^F\) has rational type
\({}^2E_6(q).{}^2A_2(q)\) or \(E_7(q).A_1(q)\).
$\mathbf C$ is semisimple of rank~8,
whereas a proper Levi subgroup of \(\mathbf G^*\) has semisimple
rank at most~7. Thus \(\mathbf C\) is not contained in a proper
Levi subgroup and hence \(s\) is isolated.
The simply connected covering of \(\mathbf C\) has center of
order dividing \(4\) or \(9\), which is surjectively mapped onto \({\bf Z}(\mathbf C)\); see \cite[Section~1.11]{C}.
Moreover, \cite[Proposition~3.6.8]{C} gives
\({\bf Z}(\mathbf C^F)={\bf Z}(\mathbf C)^F\).
Consequently, \(|{\bf Z}(\mathbf C^F)|\) divides \(4\) or \(9\) and
and hence \(p\nmid|{\bf Z}(\mathbf C^F)|\).

We choose a unipotent \(2\)-cuspidal pair
\((\mathbf M_d,\lambda_d)\) from \cite[Table~1]{KM2}, which labels a block
\(d\) of \(\mathbf C^F\), which has the image \(b\) under the map in
\cite[Proposition~4.1(b)]{KM2}. By
\cite[Table~1]{KM2}, \(\mathbf M_d^F\) has rational type
\(\Phi_2^2.{}^2E_6(q)\) or \(\Phi_2.E_7(q)\).
The polynomial order formula used above therefore yields
\(
 |{\bf Z}^\circ(\mathbf M_d)^F|_p
   =(q+1)_p^a>1, a\in\{1,2\}
\)
since \(p\mid(q+1)\).
By \cite[Theorem~22.9(ii)]{CE}, a defect group of \(d\) may be
chosen to contain a Sylow \(p\)-subgroup of
\({\bf Z}^\circ(\mathbf M_d)^F\). It is therefore non-trivial
and non-central since \(p\nmid|{\bf Z}(\mathbf C^F)|\).
By \cite[Theorem~22.9(i)]{CE}, the unipotent characters of \(d\)
form the single \(2\)-Harish--Chandra series labelled by
\((\mathbf M_d,\lambda_d)\).
Lemma~\ref{unipotentdegree} gives \(\eta_1,\eta_2\) in this series
with different \(\ell\)-parts of their degrees. Since $s$ is not central and isolated,
by \cite[Proposition~4.10]{KM2}, the bijection from the series
defining \(b\) to this series agrees in degrees with Jordan
decomposition.  The Jordan degree formula therefore gives
\(\chi_1,\chi_2\in\Irr(b)\cap\mathcal E(G,s)\) with
\(
 \chi_1(1)_\ell=\eta_1(1)_\ell\ne\eta_2(1)_\ell=\chi_2(1)_\ell,
\)
again a contradiction.  Thus no non-trivial \(1\)-twin block contains \(b\).

Finally, under the hypothesis on \(f\), Lemma~\ref{Jordan} shows
that \(J_1^{\mathbf G}(f)\) contains \(b\), so the first assertion
gives \(J_1^{\mathbf G}(f)=b\).  Another application of
Lemma~\ref{Jordan} proves the last assertion.
\end{proof}

\begin{lem}\label{Good}
Let \(s \in G^*\) and \(\tilde{s} \in \tilde{G}^{*}\) be semisimple \(p'\)-elements such that \(\iota^\ast(\tilde{s}) = s\).
Let $b$ and $\tilde b$ be respective blocks of $G$ and $\tilde G$ lying in $\mathcal{E}_p(G,s)$ and $\mathcal{E}_p(\tilde{G},\tilde{s})$
such that \(\tilde{b}\) covers \(b\).
Let \(\mathbf{L}\) be a Levi subgroup of \(\mathbf{G}\) such that \(\mathbf{L}^* \geq C_{\mathbf{G}^*}(s)\).
If \(p\) is good for \(\mathbf{L}\) and \(\IBr(b)\) is {\it a single \(\Aut(G)_b\)-orbit},
then \(\tilde{b}\) is nilpotent with abelian defect groups.
\end{lem}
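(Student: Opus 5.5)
We reduce, via a Bonnafé--Dat--Rouquier (Bonnafé--Rouquier) Morita equivalence, to a block of the Levi subgroup $\tilde L=\tilde{\mathbf L}^F$ of $\tilde G$ lying in the Lusztig series of a \emph{central} element, and then use that $p$ is good for $\mathbf L$ to pin down its defect groups. Let $\tilde{\mathbf L}=\mathbf L\mathbf Z(\tilde{\mathbf G})$ be the corresponding Levi subgroup of $\tilde{\mathbf G}$. Since $\tilde{\mathbf G}$ has connected center, $C_{\tilde{\mathbf G}^*}(\tilde s)$ is a Levi subgroup contained in $\tilde{\mathbf L}^*$. By the Bonnafé--Dat--Rouquier theorem \cite{CE}, $\tilde b$ is splendidly (in particular basically) Morita equivalent to a block $\tilde c$ of $\tilde L$ lying in $\mathcal E_p(\tilde L,\tilde s)$, with isomorphic defect groups and fusion systems. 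Since $\tilde s\in\mathbf Z(\tilde{\mathbf L}^*)$, tensoring with the linear character $\hat s$ dual to $\tilde s$ gives a further Morita equivalence preserving defect groups with a unipotent block $\tilde c_1$ of $\tilde L$. Nilpotency and abelian defect are invariant under such equivalences, so it suffices to treat $\tilde c_1$.

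Next we transfer the orbit hypothesis. By Lemma~\ref{Fong} applied to $G\trianglelefteq\tilde G$ (conjugation by $\tilde G$ stabilizes $G$), together with Clifford theory for the abelian quotient $\tilde G/G\mathbf Z(\tilde G)$, all Brauer characters of $\tilde b$ restrict to $G$ within a single orbit. As in the proof of Lemma~\ref{Twin} (via \cite[Theorem~14.4]{CE} and \cite[Lemma~3.5, Lemma~3.2]{MNS}), $\Irr(b)\cap\mathcal E(G,s)$ is an $\Aut(G)_b$-stable basic set, so all its members have the same degree. Lifting to $\tilde G$, all characters of $\Irr(\tilde b)\cap\mathcal E(\tilde G,\tilde s)$ then have the same degree: restriction from $\tilde G$ to $G$ is multiplicity free, and constituents differ only by diagonal automorphisms. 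By Jordan decomposition (which preserves degree ratios), all unipotent characters in $\tilde c_1$, equivalently in the corresponding unipotent block of $C_{\tilde{\mathbf G}^*}(\tilde s)^F$, have degrees with the same $\ell$-part.

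Lemma~\ref{unipotentdegree} now implies that $\tilde c_1$ has central defect group. We apply it to $\tilde{\mathbf L}$ (or to the centralizer, which is connected reductive), where the equal-degree conclusion excludes the second alternative. This step is the main obstacle. Lemma~\ref{unipotentdegree} is stated for $\mathbf G^F$, so I would check that its proof only uses connected reductivity; goodness of $p$ for $\mathbf L$ is what guarantees, via \cite[Theorem~22.9]{CE} and \cite[Lemma~3.15]{MNST}, that non-central unipotent blocks have unipotent characters of different $\ell$-parts. A central defect group $D\le\mathbf Z(\tilde L)$ is abelian, and a block with central defect group is nilpotent since its fusion system is trivial. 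Hence $\tilde c_1$, and therefore $\tilde c$ and $\tilde b$ (whose fusion systems are equivalent to that of $\tilde c_1$ by the splendid equivalence), is nilpotent with abelian defect groups.
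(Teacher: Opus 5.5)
There is a genuine gap at the first step of your reduction. You claim that because $\tilde{\mathbf G}$ has connected center, $C_{\tilde{\mathbf G}^*}(\tilde s)$ is a Levi subgroup. You then use $\tilde s\in\mathbf Z(\tilde{\mathbf L}^*)$ to tensor with a linear character $\hat s$ and land in a unipotent block. Connected center only makes $C_{\tilde{\mathbf G}^*}(\tilde s)$ \emph{connected}. It is reductive of maximal rank, but in general not a Levi subgroup: think of $\mathrm{Sp}_2\times\mathrm{Sp}_2\leq\mathrm{Sp}_4$, or an isolated element of $E_8$ with centralizer of type $E_7\times A_1$.

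The lemma only assumes $\mathbf L^*\geq C_{\mathbf G^*}(s)$, and $\tilde s$ is central in $\tilde{\mathbf L}^*$ exactly when $\tilde{\mathbf L}^*=C_{\tilde{\mathbf G}^*}(\tilde s)$. This is not a corner case. Proposition~\ref{All} invokes the lemma with $\mathbf L=\mathbf G$ whenever $p$ is good for $\mathbf G$, including quasi-isolated $s\neq1$, which is non-central since $\mathbf G^*$ is adjoint. Otherwise it uses a minimal Levi in which $s$ is isolated but typically not central. So your argument is correct, and clean, only when $C_{\mathbf G^*}(s)$ is itself a Levi subgroup. There is then no $\hat s$ to tensor with and no Morita equivalence between $\tilde b$ and a unipotent block.

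For a non-Levi $\mathbf C=C_{\tilde{\mathbf G}^*}(\tilde s)$, Jordan decomposition is only a character-level correspondence. Your parenthetical idea of applying Lemma~\ref{unipotentdegree} to $\mathbf C^F$ therefore cannot be transported back to $\tilde b$ by an equivalence. The paper instead argues character-theoretically inside $\tilde{\mathbf L}^F$:
\begin{enumerate}
\item The characters in $\Irr(\tilde b_{\tilde{\mathbf L}^F})\cap\mathcal E(\tilde{\mathbf L}^F,\tilde s)$ have equal degree. Since they form a basic set, they all have height zero.
\item Using the Kessar--Malle maps $J_t^{\tilde{\mathbf L}}$ of Lemma~\ref{Jordan}, and excluding non-trivial $1$-twins (Lemma~\ref{Twin} handles $E_8$), every unipotent block of $\mathbf C^F$ meeting the Jordan correspondents has \emph{all} its unipotent characters corresponding into the block. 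Hence they have equal degrees, and by Lemma~\ref{unipotentdegree} these blocks have central defect.
\item A comparison of $p$-parts of degrees forces every $p$-element $\tilde t$ with $\Irr(\tilde b_{\tilde{\mathbf L}^F})\cap\mathcal E(\tilde{\mathbf L}^F,\tilde s\tilde t)\neq\varnothing$ into $\mathbf Z(\mathbf C^F)$. Otherwise a character would have smaller $p$-part than a height-zero one.
\item Consequently all ordinary characters of the block have equal degree and height zero. The Brauer height zero theorem \cite{MNST} gives abelian defect groups, and \cite[Theorem~4.1]{MN} gives nilpotency.
\end{enumerate}
None of these ingredients appears in your proposal.

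A smaller issue: you take the basic set in $G$ from \cite[Theorem~14.4]{CE}, which needs $p$ good for $\mathbf G$, but only goodness for $\mathbf L$ is assumed. The paper avoids this by transporting linear independence of $\mathcal E(\mathbf L^F,s)$ on $p'$-elements to $\mathcal E(G,s)$ through the Bonnaf\'e--Rouquier equivalence, and only then applying \cite[Lemma~3.2]{MNS}.
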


\begin{proof}
Since \(p\) is good for \(\mathbf{L}\), by
\cite[Theorem 14.6]{CE}, the restrictions of elements of \(\mathcal{E}(\mathbf{L}^F,s)\) to $\mathbf{L}_{p'}^F$ are linearly independent. By \cite[Theorem 10.1]{CE}, Lusztig induction $R_{\mathbf{L}}^{\mathbf{G}}$
induces a Morita equivalence between the $p$-blocks in $\mathcal{E}_p(\mathbf{L}^F,s)$ and $\mathcal{E}_p(G,s)$, which sends $\mathcal{E}(\mathbf{L}^F,s)$ to $\mathcal{E}(G,s)$. Thus the restrictions of elements of \(\mathcal{E}(G,s)\) to $G_{p'}$ are linearly independent.
Since \(\IBr(b)\) is a single \(\Aut(G)_b\)-orbit, by \cite[Lemma 3.2]{MNS}, \(\Irr(b) \cap \mathcal{E}(G,s)\) is a single \(\Aut(G)_b\)-orbit.
By \cite[Theorem 2.5.1]{GLS}, \(\Aut(G)\) is induced by \(\tilde{G} \rtimes \mathcal{D}\),
where \(\mathcal{D}\) denotes the group generated by suitable graph and field automorphisms of \(G\).
{Consequently}, the stabilizers \(\tilde{G}_\chi\) for \(\chi \in \Irr(b) \cap \mathcal{E}(G,s)\) are \(\tilde{G} \rtimes \mathcal{D}\)-conjugate.
By \cite[Proposition 15.6 and Theorem 15.11\('\)]{CE}, all characters in \(\Irr(\tilde{b}) \cap \mathcal{E}(\tilde{G},\tilde{s})\) have equal degree.

Set \(\tilde{\mathbf{L}} = \mathbf{L} \mathbf{Z}(\tilde{\mathbf{G}})\). Then $\tilde{\mathbf{L}}$ is a Levi subgroup of $\tilde{\mathbf{G}}$, the center of \(\tilde{\mathbf{L}}\) is connected and $C_{\tilde{\mathbf{G}}^\ast}(\tilde{s})$ is contained in $\tilde{\mathbf{L}}^\ast$.
Set
\(\mathbf C=C_{\tilde{\mathbf G}^{*}}(\tilde s)\).
Since the center of \(\tilde{\mathbf G}\) is connected, by
\cite[Theorem~4.5.9]{C}, \(\mathbf C\) is connected.
Let \(\tilde{b}_{\tilde{\mathbf{L}}^F}\) be the block of \(\tilde{\mathbf{L}}^F\) corresponding to \(\tilde{b}\) via the Bonnaf\'e-Rouquier correspondence (see \cite[Theorem 10.1]{CE}).
By \cite[Proposition 3.3.18]{GM}, all characters in \(\Irr(\tilde{b}_{\tilde{\mathbf{L}}^F}) \cap \mathcal{E}(\tilde{\mathbf{L}}^F, \tilde{s})\) have equal degree.
Since \(p\) is good for \(\mathbf{L}\), by
\cite[Theorem 14.4]{CE}, \(\Irr(\tilde{b}_{\tilde{\mathbf{L}}^F}) \cap \mathcal{E}(\tilde{\mathbf{L}}^F, \tilde{s})\) is a basic set for \(\tilde{b}_{\tilde{\mathbf{L}}^F}\).
However, it follows from \cite[Corollary~3.17]{N1} that $\tilde{b}_{\tilde{\mathbf{L}}^F}$ at least has a height zero irreducible Brauer character. So \(\Irr(\tilde{b}_{\tilde{\mathbf{L}}^F}) \cap \mathcal{E}(\tilde{\mathbf{L}}^F, \tilde{s})\) at least has a height zero character and hence all characters in
\(\Irr(\tilde{b}_{\tilde{\mathbf{L}}^F}) \cap \mathcal{E}(\tilde{\mathbf{L}}^F, \tilde{s})\) have height zero.

We claim that no
non-trivial \(1\)-twin block contains \(\tilde b_{\tilde{\mathbf L}^F}\).  By
\cite[Section~4.2]{KM2}, this is automatic unless
\([\tilde{\mathbf L},\tilde{\mathbf L}]\) has a component of type
\(E_8\).  When \([\tilde{\mathbf L},\tilde{\mathbf L}]\) has a component of type
\(E_8\), \(\mathbf G\) is of type \(E_8\), $\mathbf L=\mathbf G$, $\tilde{\mathbf L}=\tilde{\mathbf G}$ and
$\tilde b_{\tilde{\mathbf L}^F}=\tilde  b$; moreover \(p\geq7\) since \(p\) is good for \(\mathbf L\).  Set
\(\mathbf T=\mathbf Z(\tilde{\mathbf G})\).  Since
\(\mathbf Z(\mathbf G)=1\), we have the \(F\)-stable direct product
\(\tilde{\mathbf G}=\mathbf G\times\mathbf T\) and
\(\tilde b=b\otimes a\) for a block \(a\) of \(\mathbf T^F\).
Lemma~\ref{Twin} applied to \(b\) excludes a non-trivial
\(1\)-twin containing \(b\).  By the direct-product compatibility in
\cite[Proposition~2.4 and Section~4.2]{KM2}, the claim follows for \(\tilde b\).

Let \(\Pi\) be the set of unipotent blocks $f$ of \(\mathbf C^F\) such that the Jordan correspondent of some irreducible unipotent character in the block \(f\) lies in \(\tilde{b}_{\tilde{\mathbf{L}}^F} \cap \cE(\tilde{\mathbf{L}}^F,\tilde s)\).
For \(f\in\Pi\), Lemma~\ref{Jordan} and the claim give
\(J_1^{\tilde{\mathbf L}}(f)=\tilde b_{\tilde{\mathbf L}^F}\).  Hence all unipotent characters
in \(f\) have Jordan correspondents in
\(\Irr(\tilde b_{\tilde{\mathbf L}^F})\cap\mathcal E(\tilde{\mathbf L}^F,\tilde s)\).
Since all characters in \(\Irr(\tilde{b}_{\tilde{\mathbf{L}}^F}) \cap \mathcal{E}(\tilde{\mathbf{L}}^F, \tilde{s})\) have equal degree, \cite[Corollary 2.6.6]{GM} implies that the characters in \(\bigcup_{f \in \Pi} \Irr(f) \cap \mathcal{E}({\mathbf C^F},1)\) have equal degree.
By Lemma \ref{unipotentdegree}, all blocks in \(\Pi\) have central defect group.

Fix a \(p\)-element \(\tilde{t} \in \mathbf C^F\) such that \(\Irr(\tilde{b}_{\tilde{\mathbf{L}}^F}) \cap \mathcal{E}(\tilde{\mathbf{L}}^F, \tilde{s}\tilde{t}) \neq \emptyset\).
Let \(\chi \in \Irr(\tilde{b}_{\tilde{\mathbf{L}}^F}) \cap \mathcal{E}(\tilde{\mathbf{L}}^F, \tilde{s}\tilde{t})\),
and let \(J_{\tilde{t}}(\chi)\) denote its Jordan correspondent in \(\mathcal{E}({C_{\tilde{\mathbf{G}}^*}(\tilde{s}\tilde{t})^F}, 1)\).
Similarly, let \(\varphi \in \Irr(\tilde{b}_{\tilde{\mathbf{L}}^F}) \cap \mathcal{E}(\tilde{\mathbf{L}}^F, \tilde{s})\)
and let \(J_1(\varphi)\) denote its Jordan correspondent in \(\mathcal{E}({\mathbf C^F}, 1)\).
By \cite[Corollary 2.6.6]{GM}, we have
$\chi(1) = J_{\tilde{t}}(\chi)(1) |\tilde{\mathbf{L}}^{*F} : C_{\tilde{\mathbf{G}}^*}(\tilde{s}\tilde{t})^F|_{\ell'}$ and
$\varphi(1) = J_1(\varphi)(1) |\tilde{\mathbf{L}}^{*F} : \mathbf C^F|_{\ell'}$.

Suppose \(\tilde{t} \notin \mathbf{Z}(\mathbf C^F)\).
Recall that {$p\neq\ell$} and that \(J_1(\varphi)\) lies in a block {with a central defect group}.
By \cite[Theorem 5.12]{N2}, we {obtain}
\[
\begin{aligned}
\chi(1)_p &= J_{\tilde{t}}(\chi)(1)_p  |\tilde{\mathbf{L}}^{*F} : C_{\tilde{\mathbf{G}}^*}(\tilde{s}\tilde{t})^F|_{p} \\
&\leq |\tilde{\mathbf{L}}^{*F} : \mathbf{Z}(C_{\tilde{\mathbf{G}}^*}(\tilde{s}\tilde{t})^F)|_{p} \\
&< |\tilde{\mathbf{L}}^{*F} : \mathbf{Z}(\mathbf C^F)|_{p} \\
&= J_1(\varphi)(1)_p  |\tilde{\mathbf{L}}^{*F} : \mathbf C^F|_{p} = \varphi(1)_p.
\end{aligned}
\]
This contradicts the fact that \(\varphi\) has height zero: indeed, if
\(|\tilde{\mathbf L}^{F}|_p=p^a\) and \(|D|=p^d\), then every
\(\theta\in\Irr(\tilde b_{\tilde{\mathbf L}^{F}})\) satisfies
\(\theta(1)_p=p^{a-d+h(\theta)}\).  Thus a height-zero character has the
smallest possible \(p\)-part of its degree within the block, whereas the displayed
inequality gives \(\chi(1)_p<\varphi(1)_p\).

Consequently, we have \(\tilde t\in {\bf Z}(\mathbf C^F)\).  Since \(\mathbf C\) is
connected, \cite[Proposition~3.6.8]{C} yields
\(
 {\bf Z}(\mathbf C^F)={\bf Z}(\mathbf C)^F;
\)
hence \(\tilde t\in {\bf Z}(\mathbf C)^F\).  Since \(\tilde s\) and \(\tilde t\) have
coprime orders, both are powers of \(\tilde s\tilde t\). It follows that we have
\(
 C_{\tilde{\mathbf G}^{*}}(\tilde s\tilde t)
 =C_{\tilde{\mathbf G}^{*}}(\tilde s)\cap
   C_{\tilde{\mathbf G}^{*}}(\tilde t)
 =\mathbf C
\).  In particular, we have
\(C_{\tilde{\mathbf G}^{*}}(\tilde s\tilde t)^F=\mathbf C^F\).
Let \(f_\chi\) be the unipotent block of \(\mathbf C^F\)
containing \(J_{\tilde t}(\chi)\).  Lemmas~\ref{Jordan} and~\ref{zh}
give
\(
 \tilde b_{\tilde{\mathbf L}^F} J_1^{\tilde{\mathbf L}}(f_\chi)
   =\tilde b_{\tilde{\mathbf L}^F} J_{\tilde t}^{\tilde{\mathbf L}}(f_\chi)=\tilde b_{\tilde{\mathbf L}^F}.
\)
The claim above therefore gives \(J_1^{\tilde{\mathbf L}}(f_\chi)=\tilde b_{\tilde{\mathbf L}^F}\).
Since \(f_\chi\) contains a unipotent character, Lemma~\ref{Jordan}
at \(t=1\) now implies \(f_\chi\in\Pi\).  Consequently, we have
\(
 J_{\tilde t}(\chi)\in
 \bigcup_{f\in\Pi}(\Irr(f)\cap\mathcal E(\mathbf C^F,1)).
\)

Now let \(\theta\in\Irr(\tilde b_{\tilde{\mathbf L}^{F}})\).  Since
\(\Irr(\tilde b_{\tilde{\mathbf L}^{F}})\subseteq
\mathcal E_p(\tilde{\mathbf L}^{F},\tilde s)\), there is a \(p\)-element
\(\tilde t\in\mathbf C^F\) such that
\(\theta\in\mathcal E(\tilde{\mathbf L}^{F},\tilde s\tilde t)\).  The preceding
argument applies to this \(\tilde t\): its Jordan correspondent belongs to the union
over \(f\in\Pi\) considered above and
\(C_{\tilde{\mathbf G}^{*}}(\tilde s\tilde t)^F=\mathbf C^F\).  The Jordan degree
formula and the equal degree conclusion for that union therefore show that
\(\theta(1)\) is independent of
\(\theta\).
Thus all ordinary irreducible characters of
\(\tilde b_{\tilde{\mathbf L}^{F}}\) have the same degree and, since every block
contains a height-zero ordinary character, they all have height zero.  The Brauer
height-zero theorem \cite{MNST} now implies that
\(\tilde b_{\tilde{\mathbf L}^{F}}\) has an abelian defect group.
By \cite[Theorem 4.1]{MN} and \cite[Theorem 10.1]{CE}, \(\tilde{b}_{\tilde{\mathbf{L}}^F}\) and thus \(\tilde{b}\) are nilpotent.
\end{proof}

\begin{lem}\label{zh} Keep the notation in the proof of Lemma \ref{Good} and write
\(\mathbf C=C_{\tilde{\mathbf G}^{*}}(\tilde s)\).  If the \(p\)-element
\(\tilde t\) lies in \({\bf Z}(\mathbf C)^F\) (equivalently, in \({\bf Z}(\mathbf C^F)\)), then we have
\(
 {J}_{\tilde t}^{\tilde{\mathbf L}}={J}_{1}^{\tilde{\mathbf L}}.
\)
\end{lem}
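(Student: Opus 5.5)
The plan is to unwind the definition of the maps ${J}_{t}^{\tilde{\mathbf L}}$ from \cite[Theorem~1]{KM2}, as recorded in Lemma~\ref{Jordan}. For a $p$-element $t\in C_{\tilde{\mathbf L}^*}(\tilde s)^F$, the block ${J}_t^{\tilde{\mathbf L}}(d)$ is built in \cite{KM2} in three steps. First, $d$ is labelled by a unipotent $e$-cuspidal pair $(\mathbf M_d,\lambda_d)$ of $C_{\tilde{\mathbf L}^*}(\tilde s\tilde t)^F$ via \cite[Theorem~22.9]{CE}. Next, this pair is transported, through the relation $\to_t$ of \cite[Definition~3.7]{KM2} and the bijection of \cite[Proposition~2.2]{KM2}, to an $e$-Jordan-cuspidal pair $(\mathbf L_t,\mu_t)$ of $\tilde{\mathbf L}^F$ with $\mu_t\in\mathcal E(\mathbf L_t^F,\tilde s\tilde t)$. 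Finally, the block is the one attached to this pair by \cite[Proposition~4.1(b)]{KM2}.

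So the task is to show that, when $\tilde t\in{\bf Z}(\mathbf C)^F$, every ingredient is the same for $t=\tilde t$ as for $t=1$.

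\textbf{Step 1.} The source sets coincide. As in the proof of Lemma~\ref{Good}, $\tilde s$ and $\tilde t$ are powers of $\tilde s\tilde t$, so $C_{\tilde{\mathbf L}^*}(\tilde s\tilde t)=C_{\tilde{\mathbf L}^*}(\tilde s)\cap C_{\tilde{\mathbf L}^*}(\tilde t)$. This equals $\mathbf C$, because $\mathbf C\le\tilde{\mathbf L}^*$ and $\tilde t$ is central in $\mathbf C$. Hence both maps have the same domain, the unipotent blocks of $\mathbf C^F$, with the same labelling cuspidal pairs $(\mathbf M_d,\lambda_d)$.

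\textbf{Step 2.} The cuspidal data coincide, and this is where the real work lies. Since $\tilde t\in{\bf Z}(\mathbf C)^F\le{\bf Z}(\mathbf M_d)^F$, the element $\tilde t$ is central in every Levi subgroup of $\mathbf C$ that occurs. The dual Levi subgroup $\mathbf L_t$ is defined as the dual of $C_{\tilde{\mathbf L}^*}({\bf Z}^\circ(\mathbf M_d)_e)$, and the relation $\to_t$ only involves the centralizer of $\tilde s\tilde t$. Both are therefore unchanged when $\tilde t$ is replaced by $1$, so $\mathbf L_{\tilde t}=\mathbf L_1$.

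The characters differ: $\mu_{\tilde t}\in\mathcal E(\mathbf L_1^F,\tilde s\tilde t)$ corresponds to $\mu_1\in\mathcal E(\mathbf L_1^F,\tilde s)$ via tensoring with the linear character $\hat t$ of $\mathbf L_1^F$ dual to the central element $\tilde t\in{\bf Z}(\mathbf L_1^*)^F$ (\cite[Proposition~2.5.20]{GM}). Because $\hat t$ has $p$-power order, $\mu_1$ and $\mu_1\hat t$ lie in the same $p$-block of $\mathbf L_1^F$. Moreover $R_{\mathbf L_1}^{\tilde{\mathbf L}}(\mu_1\hat t)$ and $R_{\mathbf L_1}^{\tilde{\mathbf L}}(\mu_1)$ have constituents in a common block, since $\hat t$ extends to $\tilde{\mathbf L}^F$ and Lusztig induction commutes with tensoring by such linear characters.

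\textbf{Step 3.} The block of \cite[Proposition~4.1(b)]{KM2} is characterized as the unique block containing all constituents of $R_{\mathbf L_t}^{\tilde{\mathbf L}}(\mu_t)$ (the Cabanes--Enguehard/Kessar--Malle description). So Step~2 yields ${J}_{\tilde t}^{\tilde{\mathbf L}}(d)={J}_1^{\tilde{\mathbf L}}(d)$, provided one checks that multiplication by the $p$-element $\hat t$ preserves $p$-blocks of $\tilde{\mathbf L}^F$.

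That last check holds: $\hat t\vert_{\tilde{\mathbf L}^F_{p'}}=1$, so $\chi$ and $\chi\hat t$ have equal central characters modulo $p$.

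The main obstacle is Step~2, namely verifying carefully against the precise definitions in \cite[Section~3--4]{KM2} that the $t$-dependence enters only through $C(\tilde s\tilde t)$ and through twisting by $\hat t$. One must also confirm that the twin-block choice, which is the possible ambiguity in ${J}_t$, is made compatibly. I would handle the latter by noting that the twin construction depends only on the cuspidal pair of $\mathbf C^F$ fixed in Step~1.
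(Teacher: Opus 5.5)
\textbf{There is a genuine gap, in Steps 2--3.} Your Step~1 agrees with the paper: $C_{\tilde{\mathbf L}^*}(\tilde s\tilde t)=\mathbf C$, so both maps start from the same unipotent block of $\mathbf C^F$ and the same labelling pair. Your conclusion that $\mathbf L_{\tilde t}=\mathbf L_1$ is also right. The trouble is that Steps~2--3 rest on a misdescription of the construction, and the argument built on it fails.

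You assert that the resulting pair has $\mu_t\in\mathcal E(\mathbf L_t^F,\tilde s\tilde t)$. You then compare $\mu_{\tilde t}$ with $\mu_1$ by twisting with $\hat t$, using that $\hat t$ extends to $\tilde{\mathbf L}^F$ and that Lusztig induction commutes with the twist. That extension requires $\tilde t\in{\bf Z}(\tilde{\mathbf L}^*)^F$. The hypothesis only gives $\tilde t\in{\bf Z}(\mathbf C)^F$, and $\mathbf C=C_{\tilde{\mathbf G}^*}(\tilde s)$ may be much smaller than $\tilde{\mathbf L}^*$. For instance, Lemma~\ref{Good} allows $\mathbf L=\mathbf G$, and Proposition~\ref{All} uses it that way when $p$ is good. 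Then $\tilde t$ is typically not central in $\tilde{\mathbf G}^*$. Even your intermediate claim $\tilde t\in{\bf Z}(\mathbf L_1^*)^F$ is asserted without proof. So the equality of the blocks of $R_{\mathbf L_1}^{\tilde{\mathbf L}}(\mu_1\hat t)$ and $R_{\mathbf L_1}^{\tilde{\mathbf L}}(\mu_1)$ is not established. It does not follow merely from $\mu_1$ and $\mu_1\hat t$ lying in one block of $\mathbf L_1^F$.

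\textbf{The fix is that no twisting is needed.} Your own description is internally inconsistent. You apply the bijection of \cite[Proposition~2.2]{KM2} after $\to_t$, which lands you in pairs of $C(\tilde s)$. That bijection is attached to $\tilde s$ alone, is independent of $\tilde t$, and produces a character in $\mathcal E(\mathbf L^F,\tilde s)$, not in $\mathcal E(\mathbf L^F,\tilde s\tilde t)$. Thus the only $\tilde t$-dependence is through $C(\tilde s\tilde t)$ and the relation $\to_{\tilde t}$.

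By \cite[Theorems~A and A.bis]{En}, the unipotent block of $\mathbf C^F$ is labelled by a unique $\mathbf C^F$-class of unipotent $e$-cuspidal pairs. Since $C(\tilde s\tilde t)=\mathbf C=C(\tilde s)$, such a pair is already an $e$-cuspidal pair of $C(\tilde s)$. The uniqueness assertion of \cite[Proposition~3.6(a)]{KM2} then forces both $\to_{\tilde t}$ and $\to_1$ to fix its class. This, rather than your remark that $\to_t$ ``only involves the centralizer of $\tilde s\tilde t$'', is the correct justification. The two constructions therefore produce literally the same $e$-Jordan-cuspidal pair and hence the same ($t$-twin) block. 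This also settles the twin ambiguity you flagged, with no further argument.
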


\begin{proof}
Since \(\tilde s\) and \(\tilde t\) have coprime orders and
\(\tilde t\in {\bf Z}(\mathbf C)\), we have
\(C_{\tilde{\mathbf G}^{*}}(\tilde s\tilde t)
 =C_{\mathbf C}(\tilde t)=\mathbf C.\)
Let \(\tilde c\) be a unipotent \(p\)-block of \(\mathbf C^F\), and set
\(e=e_p(q)\), where \(e_p(q)\) is the order of \(q\) modulo \(p\) if \(p\) is
odd and the order of \(q\) modulo \(4\) if \(p=2\); this is the convention of
\cite[Section~2.4]{KM2}.  By \cite[Theorems~A and A.bis]{En}, \(\tilde c\) is
labelled by a \(\mathbf C^F\)-class of unipotent \(e\)-cuspidal pairs
\((\tilde{\mathbf M}^{*},\lambda)\) of quasi-central \(p\)-defect, and this class
is unique.

We now compare the two constructions in \cite[Proposition~4.1]{KM2}. Since \(C_{\tilde{\mathbf G}^{*}}(\tilde s\tilde t)
 =C_{\mathbf C}(\tilde t)=\mathbf C\), the constructions both for \(\tilde t\) and for \(1\) start with the
same group \(\mathbf C\), the same block \(\tilde c\), and hence the same
\(\mathbf C^F\)-class of pairs \((\tilde{\mathbf M}^{*},\lambda)\).  Since this is
already an \(e\)-cuspidal pair of \(\mathbf C\),
\cite[Proposition~3.6(a)]{KM2} and its uniqueness assertion show that both
relations \(\mathord{\to}_{\tilde t}\) and \(\mathord{\to}_{1}\) leave this class
fixed.  The subsequent natural bijection of \cite[Proposition~2.2]{KM2} is
independent of \(\tilde t\).  Thus \cite[Proposition~4.1(a),(b)]{KM2} assigns the
same block of \(\tilde{\mathbf L}^{F}\) to \(\tilde c\) in the two cases.  Since
this holds for every unipotent block \(\tilde c\) of \(\mathbf C^F\), we obtain
\({J}_{\tilde t}^{\tilde{\mathbf L}}={J}_{1}^{\tilde{\mathbf L}}\).
\end{proof}

\begin{lem}\label{Badquasi}
Assume that $p$ is bad for $\mathbf{G}$, that \(\mathbf{G}\) is of exceptional type \(G_2\), \(F_4\),\linebreak[0]
\(E_6\), \(E_7\) or \(E_8\), that \(b\) is quasi-isolated and that \(\IBr(b)\) is {\it a single \(\Aut(G)_b\)-orbit}. Then \(b\) has trivial defect group.
\end{lem}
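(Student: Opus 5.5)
Since $p$ is bad for the exceptional type, we have $p\in\{2,3\}$, or $p=5$ when $\mathbf G$ has type $E_8$. In this range the quasi-isolated blocks of $G=\mathbf G^F$ with $\ell\neq p$ are classified explicitly by Enguehard for unipotent blocks and by Kessar--Malle for the quasi-isolated ones; see \cite{KM2} and the references there. Each such block is parametrized by an $e$-cuspidal pair $(\mathbf L,\lambda)$ in $C_{\mathbf G^*}(s)$ with $s$ quasi-isolated, together with tables of its defect groups and of its ordinary characters. The plan is a case-by-case run through these tables.

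\textbf{Step 1: equal degrees.} First I will show that the single-orbit hypothesis forces equal degrees. As in the proof of Lemma~\ref{Twin}, I will use \cite[Lemma~3.2]{MNS}: whenever $b$ has an $\Aut(G)_b$-stable basic set, transitivity on $\IBr(b)$ forces the characters of that basic set to be permuted transitively by $\Aut(G)_b$, so they all have the same degree. For bad $p$, $\Irr(b)\cap\mathcal E(G,s)$ need not be a basic set. So I will instead use the $\Aut(G)$-stable unitriangular basic sets now known for exceptional groups. Alternatively, I will argue directly: $\IBr(b)$ is a single orbit, and the decomposition matrix restricted to an $\Aut(G)_b$-stable set of characters is invertible. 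Either way, the characters in $\Irr(b)\cap\mathcal E(G,s)$ that appear in such a set must all have the same degree, and in particular the same $\ell$-part.

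\textbf{Step 2: positive defect gives two degrees.} Next, assume $b$ has positive defect. Since $G$ is simply connected exceptional, the center $\mathbf Z(G)$ is either trivial or of order $2$ (type $E_7$) or $3$ (type $E_6$). For unipotent $b$ ($s=1$), Lemma~\ref{unipotentcentral} and Lemma~\ref{unipotentdegree} show that $b$ has central, hence trivial or $p$-central, defect, or else $b$ contains two unipotent characters whose degrees have distinct $\ell$-parts. The latter contradicts Step~1. In the central case, Lemma~\ref{unipotentcentral} excludes a nontrivial central defect group whenever $p$ divides $|\mathbf Z(\mathbf G)^F|$, so the defect group is trivial. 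For $s\neq1$ quasi-isolated, I will run through the tables of \cite{KM2} for each $p$-block in $\mathcal E_p(G,s)$ of positive defect. The aim is to show that the $e$-Harish-Chandra series labelling $b$ contains two characters whose degrees have different $\ell$-parts. By the Jordan degree formula \cite[Corollary~2.6.6]{GM}, this reduces to two unipotent characters of $C_{\mathbf G^*}(s)^F$ in the corresponding series with different $\ell$-parts. When $C_{\mathbf G^*}(s)$ is disconnected, I will first pass to $C^\circ$ and account for the component group.

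\textbf{Step 3: ruling out the remaining series.} Finally, a series whose characters all have the same $\ell$-part must be a single character, which means the relative Weyl group is trivial. By the tables, that case occurs only for $e$-cuspidal pairs with $\mathbf Z^\circ(\mathbf L)^F_p=1$ of central, hence trivial, defect. Combining the three steps forces the defect group of $b$ to be trivial.

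\textbf{Main obstacle.} I expect Step~2 for non-unipotent quasi-isolated $s$ with $p=2,3$ to be the hard part. There the twin-block and non-connected centralizer phenomena from \cite{KM2} mean that a single block may combine several series, and the argument requires checking the tables line by line. I would first try to handle it uniformly via the degree-comparison argument of Lemma~\ref{Good}: non-central $p$-elements $t$ would give characters of smaller $p$-part than a height-zero character of $\mathcal E(G,s)$. I would fall back on the explicit tables only for the few genuinely exceptional entries.
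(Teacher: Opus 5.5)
The paper does not reprove this lemma; it quotes the second paragraph of the proof of \cite[Proposition~3.11]{MNS}. Your proposal tries to rebuild it from scratch, but the decisive step is missing.

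\cite[Lemma~3.2]{MNS} turns transitivity on $\IBr(b)$ into equal degrees only on an $\Aut(G)_b$-stable \emph{basic set}. You correctly note that at bad $p$ the set $\Irr(b)\cap\cE(G,s)$ need not be a basic set. The paper uses it as one only for $p$ good, via \cite[Theorem~14.4]{CE}, or for $s$ not quasi-isolated, via \cite[Corollary~3.8]{MNS}. You then posit ``$\Aut(G)$-stable unitriangular basic sets now known'' for every quasi-isolated block of an exceptional group at $p\in\{2,3,5\}$, non-unipotent ones included, with no argument or reference. Your ``alternative'' (an $\Aut(G)_b$-stable set of characters on which the decomposition matrix is invertible) is the same assumption restated.

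Even granting such a set $\mathcal B$, Step~2 does not close. Lemma~\ref{unipotentdegree} gives two unipotent characters of $b$ with different $\ell$-parts, but equal degrees are known only on $\mathcal B$. Nothing in your argument places both characters in $\mathcal B$, and at bad $p$ this is exactly the point that cannot be taken for granted.

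The remaining steps are deferred rather than done. For non-unipotent quasi-isolated $s$ you announce a line-by-line check of the tables of \cite{KM2} but do not carry it out. The proposed uniform fallback through the argument of Lemma~\ref{Good} does not transfer, for two reasons:
\begin{enumerate}
\item That argument needs $p$ good for $\mathbf L$, so that $\Irr(\tilde b_{\tilde{\mathbf L}^F})\cap\cE(\tilde{\mathbf L}^F,\tilde s)$ is a basic set; this is how it obtains a height-zero character in the $\tilde s$-series. For quasi-isolated $s$ one has $\mathbf L=\mathbf G$, and $p$ is bad.
\item Even where it applies, it only yields that $\tilde b$ is nilpotent with abelian defect groups, not that $b$ has defect zero.
\end{enumerate}

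Step~3 is also unsupported. At bad $p$ a block is in general a union of several $e$-Harish-Chandra series. For instance, the cuspidal unipotent character $\theta_{10}$ of $\mathrm{Sp}_4(q)$, $q$ odd, forms a one-element Harish-Chandra series but lies in the principal $2$-block. So a one-character series, or a trivial relative Weyl group, tells you nothing about the defect of the block containing it.

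A basic-set-free handle on the orbit hypothesis does exist. If $\chi\in\Irr(b)$ is $\Aut(G)_b$-invariant (as most unipotent characters are), transitivity forces its restriction to $p$-regular elements to be a multiple of $\sum_{\varphi\in\IBr(b)}\varphi$. Getting from there to defect zero still needs the block-by-block information that the paper imports from \cite{MNS}, and your proposal leaves it unverified.
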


\begin{proof}
This follows directly from {the second paragraph} of the proof of \cite[Proposition 3.11]{MNS}.
\end{proof}

\begin{prop}\label{All}
Let \(s\in G^*\) and \(\tilde s\in\tilde G^*\) be semisimple
\(p'\)-elements such that \(\iota^*(\tilde s)=s\). Let \(b\) and
\(\tilde b\) be blocks of \(G\) and \(\tilde G\), respectively, lying
in \(\mathcal E_p(G,s)\) and \(\mathcal E_p(\tilde G,\tilde s)\), and
assume that \(\tilde b\) covers \(b\). If
\(\IBr(b)\) is a single
\(\Aut(G)_b\)-orbit, then either \(b\) or \(\tilde b\)
is nilpotent with abelian defect groups.
\end{prop}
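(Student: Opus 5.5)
The plan is to reduce to Lemma~\ref{Good}, using the Bonnaf\'e--Dat--Rouquier (Bonnaf\'e--Rouquier) reduction to a minimal Levi subgroup containing the centralizer, and to dispose separately of the remaining quasi-isolated cases at bad primes. First choose an $F$-stable Levi subgroup $\mathbf L$ of $\mathbf G$ whose dual $\mathbf L^*$ is minimal among $F$-stable Levi subgroups of $\mathbf G^*$ containing $C_{\mathbf G^*}(s)$; when $C_{\mathbf G^*}(s)$ is disconnected, use $C^\circ_{\mathbf G^*}(s)$ together with the Bonnaf\'e--Dat--Rouquier equivalence. Then $s$ is quasi-isolated in $\mathbf L^*$.

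\textbf{Good primes.} If $p$ is good for $\mathbf L$, Lemma~\ref{Good} applies directly and gives that $\tilde b$ is nilpotent with abelian defect groups. This holds in particular whenever $\mathbf G$ is of classical type with $p$ odd, and whenever $\mathbf L$ has only classical components with $p\neq2$.

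\textbf{Bad primes.} Suppose $p$ is bad for $\mathbf L$.
\begin{enumerate}
\item[(a)] If $\mathbf L=\mathbf G$ is exceptional, then $b$ is quasi-isolated, so Lemma~\ref{Badquasi} shows that $b$ has trivial defect group. In particular $b$ is nilpotent with abelian defect groups.
\item[(b)] If $\mathbf L<\mathbf G$, the Morita equivalence of \cite[Theorem~10.1]{CE} between blocks in $\mathcal E_p(\mathbf L^F,s)$ and $\mathcal E_p(G,s)$ preserves defect groups and nilpotency, up to the usual compatibility with fusion. So it suffices to treat the corresponding block $b_L$ of $L=\mathbf L^F$. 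One checks that $\IBr(b_L)$ is still a single orbit under a suitable group of automorphisms, namely the stabilizer in $N_{G}(\mathbf L)\rtimes\mathcal D$ transported through the equivalence. One then argues on the components of $[\mathbf L,\mathbf L]$ in the spirit of Lemma~\ref{Normal-one-orbit2}: each exceptional component carries a quasi-isolated block at a bad prime, so Lemma~\ref{Badquasi} makes it of defect zero, while classical components are handled by the good-prime argument.
\item[(c)] The case $p=2$ with $\mathbf G$ classical, where $2$ is bad, must be handled separately. Here the blocks in $\mathcal E_2(G,s)$ for quasi-isolated $s$ are essentially unipotent-like, and \cite[Theorem~21.14]{CE} shows that the unipotent $2$-blocks are principal. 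The equal-degree argument of Lemma~\ref{Good}, combined with Lemma~\ref{unipotentdegree}, then forces central defect, which yields nilpotency with abelian defect.
\end{enumerate}

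I expect step (b) to be the main obstacle: transferring the single-orbit hypothesis from $b$ to the Levi-level block, and from there to the individual components, since automorphisms of $G$ do not obviously normalize $\mathbf L$. My first attempt would be to use the fact that the Jordan-type bijection respects $\tilde G\rtimes\mathcal D$-stabilizers, as in the proof of Lemma~\ref{Good}. Concretely, I would pass through the basic set $\Irr(b)\cap\mathcal E(G,s)$ and \cite[Lemma~3.2]{MNS}, which give equal degrees there, and then argue with degrees at the Levi level. This way I only need an equal-degree statement for $b_L$, not a genuine orbit statement. Finally, I would conclude nilpotency and abelian defect either via the argument of Lemma~\ref{Good} on $\tilde b$ or via the defect-zero conclusion on $b$.
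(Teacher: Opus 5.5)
Your skeleton matches the paper's: Lemma~\ref{Good} when $p$ is good for the minimal Levi $\mathbf L$ with $C_{\mathbf G^*}(s)\le\mathbf L^*$, Lemma~\ref{Badquasi} for quasi-isolated $b$ of exceptional type, and a Bonnaf\'e--Rouquier reduction otherwise. However, your bad-prime steps (b) and (c) rely on a tool that is unavailable there.

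In (b) you propose to get around the orbit-transfer problem by passing through $\Irr(b)\cap\mathcal E(G,s)$ and \cite[Lemma~3.2]{MNS} to obtain equal degrees. That requires $\Irr(b)\cap\mathcal E(G,s)$ to be a basic set. As in the proof of Lemma~\ref{Good} (via \cite[Theorems~10.1 and~14.4]{CE}), this is only guaranteed when $p$ is good for $\mathbf L$, which is exactly what fails in (b). Moreover, Lemma~\ref{Badquasi} needs an orbit hypothesis on $\IBr$ of the component block, not an equal-degree statement, so the detour could not feed into it anyway.

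The paper keeps everything at the level of Brauer characters. The proof of \cite[Lemma~3.5]{MNS} gives that $\IBr(b_{\mathbf L^F})$ is a single $\Aut(\mathbf L^F)_{b_{\mathbf L^F}}$-orbit. Since $[\mathbf L,\mathbf L]$ is simply connected, $N=[\mathbf L,\mathbf L]^F$ is characteristic in $\mathbf L^F$, so Lemma~\ref{Fong} passes the condition to a block of $N$, and Lemma~\ref{Normal-one-orbit2} passes it to each quasisimple factor.

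The same objection applies to (c): for types $B,C,D$ at $p=2$ there is no equal-degree argument, but none is needed. Once $b$ is principal, the trivial Brauer character is $\Aut(G)$-fixed. The orbit hypothesis then gives $\IBr(b)=\{1\}$, so $G$ would be $2$-nilpotent, which is absurd. That case simply does not occur.

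The component analysis in (b) has two further holes.

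First, ``classical components are handled by the good-prime argument'' is false for components of type $B$, $C$, $D$ at $p=2$. Examples are $D_7\le E_8$, $B_3,C_3\le F_4$, or Levi subgroups of classical groups. These components must be excluded by the principal-block contradiction just described. After that, every component at which $p$ is bad is exceptional. Inspecting Dynkin diagrams then leaves only $[\mathbf L,\mathbf L]$ of type $E_6$ or $E_7$, or $E_6+A_1$ inside $E_8$.

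Second, you give no mechanism that turns component information into nilpotency of $b_{\mathbf L^F}$. Nilpotency does not pass from a covered block to a covering block, even with abelian quotient: the principal $3$-block of $\mathfrak S_3$ covers a nilpotent block of $\mathfrak A_3$. So a defect-zero exceptional factor together with a merely ``nilpotent'' $A_1$-factor is not enough. In any case, Lemma~\ref{Good} would only give nilpotency of a covering block in the regular embedding.

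The paper needs the entire block $d$ of $[\mathbf L,\mathbf L]^F$ to have defect zero. It then uses that $\mathbf L^F/[\mathbf L,\mathbf L]^F$ is abelian, together with \cite[Proposition~6.5]{KP} and \cite[Lemma~3.4]{CEKL}. For the block $d_2$ of the $A_1$-factor $\mathbf L_2^F$ this takes a separate argument:
\begin{itemize}
\item $d_2$ is unipotent, since $1$ is the only isolated element of adjoint $A_1$.
\item For $p=3$, the orbit condition and the unipotent basic set at this good prime give equal degrees. Lemma~\ref{unipotentdegree} then gives central defect, hence defect zero, because $3\nmid|\mathbf Z(\mathbf L_2^F)|$.
\item For $p=2$, $d_2$ is principal and the $2$-nilpotency contradiction applies again.
\end{itemize}

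Finally, keep $\mathbf L$ defined by the full centralizer. Replacing $C_{\mathbf G^*}(s)$ by $C^\circ_{\mathbf G^*}(s)$ would break the hypothesis $C_{\mathbf G^*}(s)\le\mathbf L^*$ of Lemma~\ref{Good}.
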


\begin{proof}
If \(p\) is good for \(\mathbf G\), the assertion follows from
Lemma~\ref{Good}. We may therefore assume that \(p\) is bad for
\(\mathbf G\).

Suppose first that \(b\) is quasi-isolated. If \(p=2\) and
\(\mathbf G\) has type \(B\), \(C\), or \(D\), then the first
paragraph of the proof of \cite[Lemma~5.2]{EKK} shows that \(b\) is
the principal block. Since $\IBr(b)$ is a single $\Aut(G)_b$-orbit, $\IBr(b)$ consists of only the trivial Brauer character. This forces $G$ to be $2$-nilpotent. A contradiction arises. In all remaining cases,
\(\mathbf G\) is of exceptional type and Lemma~\ref{Badquasi}
shows that \(b\) has defect zero.

We may now assume that \(b\) is not quasi-isolated. Let
\(\mathbf L\) be the minimal Levi subgroup of \(\mathbf G\) such that
\(
 C_{\mathbf G^*}(s)\leq \mathbf L^*
\).
If \(p\) is good for \(\mathbf L\), then
Lemma~\ref{Good}, applied to \(b\) and \(\tilde b\) with this
choice of \(\mathbf L\), shows that \(\tilde b\) is nilpotent
with abelian defect groups. Hence we may assume that \(p\) is bad
for \(\mathbf L\).

Let \(b_{\mathbf L^F}\) be the
Bonnaf\'e-Rouquier correspondent of \(b\).
The block \(b_{\mathbf L^F}\) is quasi-isolated and the proof of
\cite[Lemma~3.5]{MNS} shows that
\(\IBr(b_{\mathbf L^F})\) is a single
\(\Aut(\mathbf L^F)_{b_{\mathbf L^F}}\)-orbit.
By \cite[Proposition~12.14]{MT}, the components of
\([\mathbf L,\mathbf L]\) are simply connected. We take representatives
\(\mathbf L_1,\ldots,\mathbf L_t\) of the \(F\)-orbits on these
components and write
\[
 [\mathbf L,\mathbf L]^F=L_1\times\cdots\times L_t\,\,\mbox{and}\,\, L_i=\mathbf L_i^{F^{m_i}}.
\]
Set \(N=[\mathbf L,\mathbf L]^F\), and choose a block
\(d_N=d_1\otimes\cdots\otimes d_t\) of \(N\) covered by
\(b_{\mathbf L^F}\), where \(d_i\) is a block of \(L_i\).
Every \(d_i\) is
quasi-isolated.
Since \([\mathbf L,\mathbf L]\) is simply connected,
\cite[Remark~1.5.13, p.~62]{GM} shows that
\(N\) is characteristic in \(\mathbf L^F\). Hence
Lemma~\ref{Fong} applied to \(N\trianglelefteq\mathbf L^F\)
shows that \(\IBr(d_N)\) is a single
\(\Aut(N)_{d_N}\)-orbit.

Since \(p\) is bad for \(\mathbf L\), choose \(i_0\)
such that \(p\) is bad for \(\mathbf L_{i_0}\). If
\(\mathbf L_{i_0}\) is of classical type, then it has type \(B\),
\(C\), or \(D\), and \(p=2\). Since \(p\ne\ell\), the defining
characteristic of \(\mathbf L_{i_0}\) is odd, so \(L_{i_0}\) is
quasisimple. As \(N=L_1\times\cdots\times L_t\), we have
\(L_{i_0}\trianglelefteq N\); hence \(L_{i_0}\) is a component
of \(N\). Choose a block \(e\) of \(\mathbf E(N)\) covered by
\(d_N\) and covering \(d_{i_0}\). Lemma~\ref{Normal-one-orbit2},
applied to the block \(d_N\) of \(N\), now shows that
\(\IBr(d_{i_0})\) is a single
\(\Aut(L_{i_0})_{d_{i_0}}\)-orbit. The second paragraph excludes
this possibility. Since every prime is good for type \(A\),
\(\mathbf L_{i_0}\) must therefore be exceptional. Thus
\([\mathbf L,\mathbf L]\) has an exceptional component.

The possible proper Levi subgroups with an exceptional component are
obtained by deleting vertices from the exceptional Dynkin diagrams:
\[
\begin{array}{c|c}
 \text{type of }\mathbf G&
 \text{possible type of }[\mathbf L,\mathbf L]\\ \hline
 G_2,\ F_4,\ E_6&\text{none}\\
 E_7&E_6\\
 E_8&E_7,\ E_6,\ E_6+A_1.
\end{array}
\]
Consequently, either \([\mathbf L,\mathbf L]\) is simple, or
\(\mathbf G\) has type \(E_8\) and
\([\mathbf L,\mathbf L]\) has type \(E_6+A_1\).

Suppose first that \([\mathbf L,\mathbf L]\) is simple and let \(d\)
be its block covered by \(b_{\mathbf L^F}\).
In this case \([\mathbf L,\mathbf L]\) has type
\(E_6\) or \(E_7\), so \([\mathbf L,\mathbf L]^F\) is quasisimple
and is a component of \(\mathbf L^F\). Thus the quasi-isolated
block \(d\) satisfies the orbit condition by
Lemma~\ref{Normal-one-orbit2}. Since \(p\) is bad for
\([\mathbf L,\mathbf L]\), Lemma~\ref{Badquasi} therefore implies
that \(d\) has defect zero. Since
\(\mathbf L^F/[\mathbf L,\mathbf L]^F\) is abelian,
\cite[Proposition~6.5]{KP} and \cite[Lemma~3.4]{CEKL} show that
\(b_{\mathbf L^F}\) is nilpotent with abelian defect groups. The
Bonnaf\'e--Rouquier equivalence gives the same conclusion for \(b\).

It remains to consider the case \(\mathbf G\) has type \(E_8\) and
\(
 [\mathbf L,\mathbf L]=\mathbf L_1\times\mathbf L_2,
\)
where \(\mathbf L_1\) and \(\mathbf L_2\) have types \(E_6\) and
\(A_1\), respectively. Since \(p\) is bad for \(\mathbf L\), we have
\(p\in\{2,3\}\). We write $d=d_1 \otimes d_2$, where \(d_i\) is a quasi-isolated block of \(\mathbf{L}_i^F\) covered by \(b_{\mathbf{L}^F}\).
By Lemma~\ref{Badquasi}, \(d_1\) has defect zero.

Since $\mathbf G$ has connected center, $\mathbf Z(\mathbf L)$ is
connected (see
\cite[Lemma~13.14(ii)]{DM}). Then by
\cite[Theorem~4.5.9]{C}, we have
\(
 C_{\mathbf L^*}(s)=C_{\mathbf L^*}^{\circ}(s)
\) and hence $s$ is
isolated in $\mathbf L^*$.
The inclusion
\(
 [\mathbf L,\mathbf L]\hookrightarrow\mathbf L
\)
is a regular embedding. Let $t$ be the image of $s$ under the dual
isotypic morphism
\(
 \mathbf L^*\rightarrow[\mathbf L,\mathbf L]^*.
\)
By \cite[Proposition~2.3(b)]{Bon1}, $t$ is isolated. Moreover, it follows from \cite[Proposition 15.6]{CE} that
\(
 \Irr(d)\cap
 \mathcal E([\mathbf L,\mathbf L]^F,t)\neq\varnothing.
\)
Write $t=(t_1,r)$ under the decomposition
\(
 [\mathbf L,\mathbf L]^*
   =\mathbf L_1^*\times\mathbf L_2^*.
\)
Since $t$ is isolated, both $t_1$ and $r$ are isolated in their
respective factors. Moreover, from $d=d_1\otimes d_2$ and the product
decomposition of Lusztig series, we obtain
\(
 \Irr(d_2)\cap
 \mathcal E(\mathbf L_2^F,r)\neq\varnothing.
\)
Now $\mathbf L_2$ is simply connected of type $A_1$, so
$\mathbf L_2^*$ is of adjoint type $A_1$. The identity is the only
isolated semisimple element of $\mathbf L_2^*$. Hence $r=1$, and
therefore $d_2$ is unipotent.

Assume first that \(p=3\). By \cite[Lemma~3.2]{MNS} and
\cite[Theorem~14.6]{CE}, the unipotent characters in \(d_2\) form a
single automorphism orbit and hence have equal degrees.
Lemma~\ref{unipotentdegree} shows that \(d_2\) has central defect.
Since \(p=3\) does not divide
\(\lvert\mathbf Z(\mathbf L_2^F)\rvert\), the block \(d_2\) has
defect zero. Thus \(d=d_1\otimes d_2\) has defect zero and the same
argument as in the simple case shows that \(b\) is nilpotent with
abelian defect groups.

Finally, assume that \(p=2\). The unipotent block \(d_2\) is the
 principal \(2\)-block of \(\mathbf L_2^F\). Since $\IBr(d_2)$ is a single $\Aut(\mathbf L_2^F)_{d_2}$-orbit, we have
\(
 \IBr(d_2)=\{1_{\mathbf L_2^F}\}.
\)
So \(\mathbf L_2^F\) is \(2\)-nilpotent, which is
impossible for a finite group of Lie type \(A_1\) in odd defining
characteristic. Hence this case cannot occur, completing the proof.
\end{proof}

\section{Proof of Theorem \ref{Main}}

In this section, we give a proof of Theorem~\ref{Main}. By Proposition~\ref{Reduction},
Theorem~\ref{Main} is equivalent to the following proposition.

\begin{prop}\label{Pro:Quai-simple}
 Let  \(H\) be a quasisimple group, \(\tilde{H}\) a finite group with \(H \trianglelefteq \tilde{H}\),
 \(c\) a block of \(H\), and \(\tilde{c}\) a block of \(\tilde{H}\) covering \(c\) with an abelian defect group \(\tilde{P}\).
Assume that  \(\tilde{H}=\tilde{P}H\). If \(\IBr(c)\) is a single \(\Aut(H)_{c}\)-orbit,
 {then} \(\tilde{c}\) is inertial.
 \end{prop}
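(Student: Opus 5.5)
Our plan is to work through the classification of finite quasisimple groups. Write $H/\mathbf Z(H)$ as alternating, sporadic, of Lie type in defining characteristic $p$, or of Lie type in non-defining characteristic. The first step is to make $c$ $\tilde H$-invariant. Since $\tilde H/H$ is a $p$-group, the Fong--Reynolds reduction from the proof of Proposition~\ref{Reduction} allows this, so we assume $\tilde c=c^{\tilde H}$. We set $P=\tilde P\cap H$, which is a defect group of $c$ and is abelian. The second step is to show that whenever $c$ is nilpotent, $\tilde c$ is inertial. When $c$ is nilpotent with abelian defect group, the extension theory of nilpotent blocks (Puig, \cite{P1}, as used in Lemma~\ref{Normal-one-orbit}) shows that $\tilde c$ is basically Morita equivalent to a twisted group algebra of an extension $D.(\tilde H/H)$, where $\tilde H/H$ is a $p$-group. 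Such a block has the normal defect group $\tilde P$, so $\tilde c$ is inertial. We will use this repeatedly.

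The heart of the argument is to show that $c$, or a closely related block, is nilpotent. In the non-defining characteristic Lie type case, we pass to the simply connected cover $G=\mathbf G^F$, using that blocks dominated by a block of $G$ keep the orbit condition (as in Lemma~\ref{Normal-one-orbit2}, via \cite[Corollary B.8]{N2}), and that inertiality passes along central quotients by \cite[Corollary 1.14]{P1}. Then we apply Proposition~\ref{All}, which gives two cases.
\begin{enumerate}
\item If $b$ itself is nilpotent with abelian defect groups, the second step finishes the argument.
\item If instead $\tilde b$ of $\tilde G=\tilde{\mathbf G}^F$ is nilpotent, we use that the automorphisms induced by $\tilde H$ on $G$ lie in $\tilde G\rtimes\mathcal D$ by \cite[Theorem 2.5.1]{GLS}. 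The $p$-part of the diagonal action comes from $\tilde G$, and $\tilde G/G\mathbf Z(\tilde G)$ is abelian. Combining nilpotency of $\tilde b$ with the results on covered blocks in \cite{P1} and \cite[Proposition 2.3]{ZZ1}, we would deduce inertiality of the block of $G\rtimes\tau(\tilde P)$, and then of $\tilde c$ by Lemma~\ref{Defect}.
\end{enumerate}

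The remaining families are handled by known block structure. In defining characteristic, blocks are either of defect zero or of full defect. Full defect blocks have nonabelian Sylow subgroups except in rank one cases such as $\PSL_2(p^f)$, where the number of Brauer characters rules out a single orbit unless the block is trivial. For alternating groups, $p$-blocks are parametrized by cores, and an orbit condition on $\IBr(b)$, whose elements are indexed by $p$-regular partitions of the weight, forces weight $0$, so $c$ has defect zero. Sporadic groups and exceptional covers are checked against the known decomposition matrices, as in \cite{MNS}. In every case where the defect is nontrivial, $c$ turns out to be nilpotent, or defect zero, and the second step applies.

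We expect the main obstacle to be the second case above, where only $\tilde b$ rather than $b$ is known to be nilpotent. There we must control how a $p$-group of outer diagonal and field automorphisms acting on $c$ interacts with the nilpotent block $\tilde b$. The first thing we would try is to form $G\tilde G\rtimes\tau(\tilde P)$ and apply the structure theorem for extensions of nilpotent blocks to the normal subgroup $\tilde G$. This yields an extension of the defect group by the $p$-group quotient, and we then conclude as in the second step.
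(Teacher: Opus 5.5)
\textbf{There is a genuine gap, in the case you flag as the main obstacle.} Your preliminary steps are fine. Here $c$ is automatically $\tilde H$-stable, because $\tilde P$ is a defect group of $\tilde c$ and $\tilde H=\tilde PH$. Also, a nilpotent $c$ makes $\tilde c$ nilpotent. The difficulty is the case where Proposition~\ref{All} only gives that $\tilde b$ is nilpotent. Then $c$ is typically not nilpotent: think of a block of $\mathrm{SL}_n(q)$ whose several Brauer characters are permuted by diagonal automorphisms and which is covered by a nilpotent block of $\mathrm{GL}_n(q)$. So ``conclude as in the second step'' is unavailable.

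Applying the K\"ulshammer--Puig extension theorem to $\tilde G\trianglelefteq\tilde G\rtimes\tau(\tilde P)$ only tells you that the block at the \emph{top} is nilpotent. It says nothing about the block of $G\rtimes\tau(\tilde P)$, which lies \emph{below} it. That subgroup is not even normal there in general, since field automorphisms act nontrivially on $\tilde G/G$. Your sketch also leaves untreated the $p'$-part of $\tilde G/G$ and the $\tau(\tilde P)$-stability of $\tilde b$.

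The missing ingredient is the reverse implication: Puig's theorem that a block of a normal subgroup covered by a nilpotent block is inertial \cite[Theorem~3.13]{P2}. You also need an overgroup in which $\tilde H$ is normal and which carries a nilpotent block covering $\tilde c$. The paper builds this as follows.
\begin{enumerate}
\item Reduce to $\mathbf O_p(\tilde H)=1$ and $\operatorname{Ker}(c)=1$, and embed $\tilde H$ into $\check A$ (Lemma~\ref{embedding}).
\item Take the $p'$-extension $\check K$ of $H$ inside $\check G$. By \cite[Lemma~2.2(a)]{KM}, the blocks of $\check K$ covering $c$ are nilpotent and their number is prime to $p$.
\item Pick a $\tilde P$-stable one. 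It is a nilpotent block of $\tilde P\check K$ with defect group $\tilde P$.
\item Pass to its Brauer correspondent in $L=HN_{\tilde P\check K}(\tilde P)$, which is still nilpotent because $\tilde P$ is abelian.
\item Observe that $\tilde H\trianglelefteq L$ and apply Puig's theorem.
\end{enumerate}

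\textbf{Your treatment of the other families rests on a false claim.} It is not true that $c$ is always nilpotent or of defect zero there. By \cite{MNS}, the covers $2.\mathfrak A_n$ ($p=3$), $2.\mathfrak A_6$ and $6.\mathfrak A_6$ ($p=5$), and $2.J_2$, $2.HS$, $2.G_2(4)$ ($p=3$) have non-nilpotent blocks satisfying the orbit hypothesis. These have defect group of order $p$ and $|\IBr(c)|=2$, forming one $\Aut(H)_c$-orbit. Likewise ${}^2G_2(3^{2n+1})$ has non-nilpotent $2$-blocks with Klein four defect groups and three Brauer characters. Moreover, the Suzuki and Ree groups lie outside the Frobenius setting of Section~\ref{Sec:3}, so Proposition~\ref{All} cannot simply be quoted for them.

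These blocks need direct arguments, as in Lemmas~\ref{Al} and~\ref{2B2}. First one shows $\tilde H=H$, using $p\nmid|\operatorname{Out}(H)|$ and $\mathbf O_p(\tilde H)=1$. Then inertiality is proved by hand:
\begin{enumerate}
\item A defect group of order $3$ always gives an inertial block.
\item For order $5$ with $e=m=2$, the edge-swapping automorphism forces the Brauer tree to be the star with the exceptional vertex in the middle. This matches the tree of the Brauer correspondent.
\item The Klein four case is basically Morita equivalent to $\mathcal O\mathfrak A_4$.
\end{enumerate}
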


Observe that we have \(\mathbf{O}_p(\tilde{H}) = \mathbf{O}_p(\mathbf{Z}(\tilde{H}))\) and \(\mathbf{O}_p(\tilde{H}) \cap H = \mathbf{O}_p(\mathbf{Z}(H))\). Set \(\tilde{L}= \tilde{H}/\mathbf{O}_p(\tilde{H})\) and \(L= H/\mathbf{O}_p(\mathbf{Z}(H))\). Then $L$ may be identified with a normal subgroup of \(\tilde L\). Let \(\tilde{f}\) and \(f\) be the blocks of \(\tilde{L}\) and \(L\) dominated by \(\tilde{c}\) and \(c\) respectively. Then \(\IBr(c) = \IBr(f)\) and
\(\IBr(f)\) remains a single \(\Aut(L)_{f}\)-orbit. By \cite[Corollary 1.14]{P1}, \(\tilde{c}\) is inertial if and only if \(\tilde{f}\) is inertial. Consequently Proposition \ref{Pro:Quai-simple} is equivalent to the following statement.

\begin{prop}\label{Pro:Quai-simple-1}
 Let  \(H\) be a quasisimple group, \(\tilde{H}\) a finite group with \(H \trianglelefteq \tilde{H}\),
 \(c\) a block of \(H\), and \(\tilde{c}\) a block of \(\tilde{H}\) covering \(c\) and having an abelian defect group \(\tilde{P}\).
Assume that  \(\tilde{H}=\tilde{P}H\) and that $\mathbf{O}_p(\tilde{H})=1$. If \(\IBr(c)\) is a single \(\Aut(H)_{c}\)-orbit,
 then \(\tilde{c}\) is inertial.
 \end{prop}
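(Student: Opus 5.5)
We argue by the classification of finite simple groups applied to \(S=H/\mathbf Z(H)\), treating groups of Lie type in non-defining characteristic through Proposition~\ref{All}, and all remaining cases through known block structures. First we reduce the extension problem to the structure of \(c\) itself. Since \(\mathbf O_p(\tilde H)=1\), we have \(\mathbf O_p(\mathbf Z(H))=1\). Hence \(\mathbf Z(H)\) is a \(p'\)-group, and \(\tilde H/H\cong \tilde P/(\tilde P\cap H)\) is an abelian \(p\)-group acting on \(H\) by automorphisms stabilizing \(c\). The strategy is to show that \(c\) itself has a very restricted shape in every case. Either \(c\) has defect zero; or \(c\) is nilpotent with abelian defect groups; or \(c\) lies in a short explicit list where \(\tilde H\) acts through inner and diagonal automorphisms. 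In the first two situations we then invoke the extension results for nilpotent blocks of \cite{P1}. If \(c\) is nilpotent and \(\tilde P\)-stable with \(\tilde H/H\) a \(p\)-group, then \(\tilde c\) is basically Morita equivalent to a twisted group algebra of an extension of the defect group. Because \(\tilde P\) is abelian, this extension has normal defect group, so \(\tilde c\) is inertial. If \(c\) has defect zero, \(\tilde c\) is even nilpotent.

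For groups of Lie type in characteristic \(\ell\neq p\), we realize \(H\) (up to a central \(p'\)-quotient) as \(G=\mathbf G^F\) with \(\mathbf G\) simple simply connected, excluding exceptional covers. We then take a regular embedding \(G\le\tilde G\) as in Section~3. By \cite[Theorem 2.5.1]{GLS}, the action of \(\tilde P\) is realized inside \(\tilde G\rtimes\mathcal D\). Proposition~\ref{All} gives that \(b\) or the covering block of \(\tilde G\) is nilpotent with abelian defect groups. If \(b\) is nilpotent, we conclude as above. If only the block of \(\tilde G\) is nilpotent, we use that \(\tilde G/G\mathbf Z(\tilde G)\) is abelian, together with \cite[Proposition~6.5]{KP} and \cite[Lemma~3.4]{CEKL}, to transfer nilpotency to \(b\) (covered blocks of nilpotent blocks with abelian quotient). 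The contribution of field and graph automorphisms in \(\tilde P\) is handled by Lemma~\ref{Defect}: we pass to \(G^\tau\) and show that it is basically Morita equivalent to a block with normal abelian defect group. The point here is that the Bonnafé--Rouquier equivalence can be chosen \(\tilde P\)-equivariantly, so that inertiality is preserved.

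The remaining cases are alternating groups, sporadic groups, Lie type in defining characteristic \(p\), and exceptional covers. For these we use \cite[proof of Proposition 3.11, second paragraph]{MNS} together with the classification of blocks with a single Brauer orbit. In defining characteristic, blocks are of full defect or defect zero, and full-defect blocks carry many Brauer characters not all conjugate under automorphisms, except in tiny cases. So \(c\) has defect zero. For alternating and symmetric groups, Nakayama/weight theory leaves only defect zero or cyclic-defect blocks with \(e=1\), which are nilpotent. For sporadic groups and exceptional covers we check the decomposition matrices in the known tables.

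The main obstacle is the equivariance in the Lie-type case. Knowing that \(b\) or \(\tilde b\) is nilpotent does not by itself control the outer \(p\)-automorphisms (field automorphisms of \(p\)-power order) in \(\tilde P\). The plan is to reduce to the nilpotent case for \(c\) and then apply Puig's theorem on extensions of nilpotent blocks by \(p\)-groups \cite{P1}. This requires the full abelian \(\tilde P\), rather than just \(P\), in order to obtain inertiality.
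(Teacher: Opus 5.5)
There is a genuine gap in the Lie-type case. It sits at the step where you ``transfer nilpotency'' from the covering block of $\tilde G$ down to $b$. A block of a normal subgroup that is covered by a nilpotent block need not be nilpotent, even when the quotient is abelian. The results \cite[Proposition~6.5]{KP} and \cite[Lemma~3.4]{CEKL} go in the opposite direction: Proposition~\ref{All} uses them to pass from a defect-zero block of $[\mathbf L,\mathbf L]^F$ up to the covering block of $\mathbf L^F$.

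Concretely, take $H=\mathrm{SL}_2(7)$, $p=3$, and $c$ the faithful block with characters of degrees $4,4,8$. It has defect group of order $3$ and inertial index $2$, so it is not nilpotent, and its two Brauer characters are swapped by the diagonal automorphism. Yet the covering block of $\mathrm{GL}_2(7)$, in the series of $\tilde s=\mathrm{diag}(1,-1)$ whose centralizer is the split torus, is nilpotent. The same happens for $\mathrm{SL}_2(q)$ for every odd $p$ dividing $q-1$, where field automorphisms of $p$-power order can also act.

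So the alternative ``$\tilde b$ nilpotent but $b$ not'' in Proposition~\ref{All} really occurs. Your trichotomy (defect zero, nilpotent, or a short list with only inner and diagonal action) does not cover it, and the nilpotent-extension result you take from \cite{P1} cannot be applied to $c$. Your appeal to a $\tilde P$-equivariant Bonnaf\'e--Rouquier equivalence to control field automorphisms is also unsupported.

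The missing idea is to keep the nilpotent block upstairs and use Puig's theorem that a block of a normal subgroup of $p'$-index covered by a nilpotent block is inertial \cite[Theorem~3.13]{P2}. The paper's route is as follows:
\begin{enumerate}
\item Reduce to $\operatorname{Ker}(c)=1$.
\item Embed $\tilde H$ in $\check A=(\tilde G\rtimes\mathcal D_Z)/(Z\,\mathbf O_p(\mathbf Z(\tilde G)))$ (Lemma~\ref{embedding}); this is where $\mathbf O_p(\tilde H)=1$ is used.
\item Let $\check K$ be the preimage in $\check G$ of the $p'$-part of the abelian group $\check G/H$. By \cite[Lemma~2.2(a)]{KM}, the blocks of $\check K$ covering $c$ are all nilpotent and their number is prime to $p$.
\item The $p$-group $\tilde P$, field automorphisms included, therefore fixes one of them. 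That block becomes a nilpotent block of $\tilde P\check K$ with defect group $\tilde P$.
\item Its Brauer correspondent in $HN_{\tilde P\check K}(\tilde P)$ is nilpotent because $\tilde P$ is abelian. It covers $\tilde c$ and contains $\tilde H$ as a normal subgroup of $p'$-index, so Puig's theorem applies.
\end{enumerate}
No equivariance of Bonnaf\'e--Rouquier equivalences is needed.

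Your trichotomy also fails outside Lie type. Faithful blocks of $2.\mathfrak A_n$, $2.J_2$, $2.HS$, $2.G_2(4)$ at $p=3$, and of $2.\mathfrak A_6$, $6.\mathfrak A_6$ at $p=5$, remain possible under the orbit condition. They have $|\IBr(c)|=2$ and defect group of prime order, so they are not nilpotent. The paper (Lemma~\ref{Al}) first shows $\tilde H=H$ there, since $p\nmid|\mathrm{Out}(H)|$ and $\mathbf O_p(\tilde H)=1$. It then proves $c$ inertial directly, via the star-shaped Brauer tree when $p=5$.

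Finally, the Suzuki and Ree groups are not covered by Proposition~\ref{All}, which assumes a Frobenius endomorphism. They need the separate treatment of Lemma~\ref{2B2}, which your plan omits.
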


\begin{lem}\label{Al}
Proposition~\ref{Pro:Quai-simple-1} holds when
one of the following two cases occurs.

\smallskip\noindent{\bf 4.3.1.}
\(H/\mathbf Z(H)\) is a sporadic simple group, an alternating group
\(\mathfrak A_n\) with \(n\geq5\), a simple group of Lie type in
defining characteristic \(p\), or the Tits group \({}^2F_4(2)'\).

\smallskip\noindent{\bf 4.3.2.}
\(H\) is an exceptional covering group of Lie type and \(c\) is
faithful.
\end{lem}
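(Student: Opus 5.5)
The plan is to show, in both cases, that the block $c$ is forced to be very restricted: either it has defect zero, or its defect groups are cyclic/abelian with a controlled extension. Then \cite[Corollary]{Zhou}, \cite[Corollary~1.4]{ZZ} and Puig's results \cite{P1} finish the argument.

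The first step is to use the orbit hypothesis. Because $\Aut(H)_c$ acts transitively on $\IBr(c)$, we have $|\IBr(c)|\le |\mathrm{Out}(H)_c|$, and all Brauer characters in $\IBr(c)$ have the same degree. For sporadic groups, the Tits group and exceptional covers, $|\mathrm{Out}(H)|\le 2$ apart from a few small cases. The known decomposition matrices and block data then let one check case by case which blocks have $|\IBr(c)|\le 2$ with equal degrees; this is essentially \cite{MNS}, second paragraph of the proof of its main theorem. I expect this to leave only defect zero blocks, or blocks with cyclic (hence abelian) defect group in which the Brauer tree is a star with two edges swapped by an outer automorphism.

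For alternating groups I would use the combinatorics of $p$-cores. A block of $\mathfrak A_n$ of positive weight $w$ has many Brauer characters, roughly $k(p-1,w)$ up to halving. Since $\mathrm{Out}$ has order at most $4$, this forces $w=0$ except for tiny $p$ and $n$, which can be checked directly. Faithful blocks of $2.\mathfrak A_n$ would be handled via spin block theory in the same way. In defining characteristic $p$, every block other than those of defect zero has full defect, and the principal-series-type block contains too many Brauer characters (at least the number of $q$-restricted weights modulo Galois twists, which exceeds $|\mathrm{Out}|$). Hence $c$ has defect zero there as well.

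Once $c$ has defect zero and $\tilde H=\tilde PH$, $\tilde c$ has a normal defect group modulo $H$ and is inertial, in fact nilpotent, by a Külshammer–Puig style argument (\cite{P1}, or \cite[Corollary]{Zhou}). In the cyclic-defect cases, inertiality of $\tilde c$ follows because blocks with cyclic defect in which the inertial quotient acts as a single orbit are basically Morita equivalent to their Brauer correspondents. I expect the main obstacle to be the sporadic and exceptional-cover case analysis of the small number of non-defect-zero survivors. There I would rely on the explicit tables, namely GAP and the Modular Atlas, rather than a uniform argument.
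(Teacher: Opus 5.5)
\textbf{There is a genuine gap in passing from $c$ to $\tilde c$ in the non-nilpotent case.} The lemma asserts that $\tilde c$ is inertial, where $\tilde c$ is a block of $\tilde H$ with defect group $\tilde P$. The only thing known about $\tilde P$ is $\tilde P\cap H=P$.

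In the non-nilpotent survivors, $P$ is cyclic of order $p\in\{3,5\}$. A priori, however, $\tilde P$ could be a larger, non-cyclic abelian group, so a Brauer-tree argument applied to $c$ tells you nothing about $\tilde c$. You also cannot appeal to a general ``$p$-extensions of inertial blocks are inertial'' principle. \cite[Corollary]{Zhou} concerns $p'$-extensions, and the $p$-power-index statement is essentially what Theorem~\ref{Main} is about. Tellingly, your plan never uses the hypothesis $\mathbf O_p(\tilde H)=1$.

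The missing idea is the paper's: in these cases $\tilde H=H$.
\begin{enumerate}
\item Every surviving $H$ has $\operatorname{Out}(H)$ cyclic of order $2$ or a Klein four group, so $p\nmid|\operatorname{Out}(H)|$.
\item Since $\tilde H/H$ is a $p$-group, $\tilde H$ acts on $H$ by inner automorphisms. Hence $\tilde H=HC_{\tilde H}(H)$ with $C_{\tilde H}(H)/\mathbf Z(H)$ a $p$-group.
\item $\mathbf O_p(\tilde H)=1$ forces $\mathbf Z(H)$ to be a $p'$-group. Schur--Zassenhaus then gives a characteristic Sylow $p$-subgroup of $C_{\tilde H}(H)$, which lies in $\mathbf O_p(\tilde H)=1$.
\item Thus $C_{\tilde H}(H)=\mathbf Z(H)$, so $\tilde H=H$ and $\tilde c=c$. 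Only at this point does a cyclic-defect argument finish the proof.
\end{enumerate}

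\textbf{Two further points need repair.}

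First, your $p$-core count for $\mathfrak A_n$ does not reduce to finitely many checks. For $p\in\{2,3\}$ and small weight, the number of Brauer characters stays bounded for all $n$. Moreover, positive-defect blocks do satisfy the orbit hypothesis. For example, weight-one $3$-blocks of $\mathfrak A_n$ with self-conjugate core have a single Brauer character. Likewise, the faithful blocks of $2.\mathfrak A_n$ at $p=3$ that survive have $|\IBr(c)|=2$ and defect groups of order $3$. So the correct dichotomy is not ``defect zero or cyclic''. It is: either $c$ is nilpotent (whence $\tilde c$ is nilpotent), or $(H,p)$ is one of $(2.J_2,3)$, $(2.HS,3)$, $(2.\mathfrak A_n,3)$, $(2.\mathfrak A_6,5)$, $(6.\mathfrak A_6,5)$, $(2.G_2(4),3)$, with defect groups of prime order and $|\IBr(c)|=2$. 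This is exactly what \cite{MNS} supplies.

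Second, ``the inertial quotient acts as a single orbit'' is not the criterion for a basic Morita equivalence with the Brauer correspondent. What you need is that the Brauer tree is a star with the exceptional vertex, if any, at its centre. For $p=3$ this is automatic, since $e=2$ and $m=1$. For $p=5$ one has $e=m=2$. An automorphism in $\Aut(H)_c$ interchanging the two edges of the two-edge tree must fix the unique exceptional vertex, which forces it to be the middle vertex. Then \cite[Theorem~11.1.16 and Remark~11.14.4]{L} give the basic Morita equivalence with the Brauer correspondent.
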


\begin{proof}
If \(c\) is nilpotent, then so is \(\tilde c\) and there is nothing to
prove. So we may assume that \(c\) is not nilpotent. Set
\(S=H/\mathbf Z(H)\).

Suppose first that Statement~{\bf 4.3.2} holds. By
\cite[Proposition~2.11]{MNS}, we have
\(
 H\cong 2.G_2(4)\), \(p=3\), \(|\IBr(c)|=2
\)
and \(c\) has a defect group of prime order. Moreover,
\(\operatorname{Out}(H)\) is cyclic of order \(2\); see the Atlas
\cite{CCNPW}.

Suppose instead that Statement~{\bf 4.3.1} holds. By
\cite[Proposition~3.1]{MNS}, \(S\) is not of Lie type in defining
characteristic \(p\). By \cite[Proposition~2.6 and Theorem~2.8]{MNS},
the block \(c\) has a defect group of prime order. Since a \(2\)-block
with cyclic defect groups is nilpotent, \(p\geq3\). Furthermore,
\cite[Corollary~11.1.4]{L} gives
\(|\IBr(c)|\geq2\). Since \(\IBr(c)\) is a single \(\Aut(H)_{c}\)-orbit, we have
\(|\operatorname{Out}(H)|\geq2\). The classification in
\cite[Proposition~2.6 and Theorem~2.8]{MNS} now yields
\[
 (H,p)\in\bigl\{(2.J_2,3),(2.HS,3),
 (2.\mathfrak A_n,3)\ (n\neq 6),(2.\mathfrak A_6,5),
 (6.\mathfrak A_6,5)\bigr\},
\]
and in every case \(|\IBr(c)|=2\). By
\cite[Theorem~5.2.1 and Tables~5.3g and 5.3m]{GLS},
\(\operatorname{Out}(H)\) is cyclic of order \(2\), except when
\(S\cong\mathfrak A_6\), in which case it is a Klein four group.

In either case, \(p\in\{3,5\}\),
\(|\IBr(c)|=2\), the defect groups of \(c\) have prime
order and \(p\nmid|\operatorname{Out}(H)|\). Since
\(\tilde H/H\) is a \(p\)-group, the $\tilde H$-conjugation action on \(H\) produces
trivial image in \(\operatorname{Out}(H)\). Hence
\(\tilde H=H C_{\tilde H}(H)\) and
\(C_{\tilde H}(H)/\mathbf Z(H)\ \text{is a \(p\)-group}.
\)
The condition \(\mathbf O_p(\tilde H)=1\) implies that
\(\mathbf Z(H)\) is a \(p'\)-group. Set \(C=C_{\tilde H}(H)\). Then
\(C\trianglelefteq\tilde H\), \(\mathbf Z(H)\leq\mathbf Z(C)\) and
\(C/\mathbf Z(H)\) is a \(p\)-group. By the Schur-Zassenhaus
theorem, \(C\) has a unique complement \(Q\) to \(\mathbf Z(H)\). Thus \(Q\) is the unique Sylow
\(p\)-subgroup of \(C\), so it is characteristic in \(C\) and hence
normal in \(\tilde H\). Therefore \(Q=1\). Consequently, we have
\(C_{\tilde H}(H)=\mathbf Z(H)\) and \(\tilde H=H\).

It remains to prove that \(c\) is inertial. Let \(D\) be a defect
group of \(c\). If \(p=3\), then \(D\) is cyclic of order 3 and it is known that every block with
defect group of order 3 is inertial.

Suppose now that \(p=5\). Then \(D\) is cyclic of order 5 and
\(|\IBr(c)|=2\). Hence the inertial index of \(c\) is
\(e=2\) and its exceptional multiplicity is
\(
 m=\frac{|D|-1}{e}=2.
\)
In particular, the exceptional vertex
of the Brauer tree \(\Gamma\) of \(c\) is uniquely distinguished.
By the orbit hypothesis there is an automorphism
\(\alpha\in\Aut(H)_c\) interchanging the two members of
\(\IBr(c)\). The induced automorphism of \(\Gamma\)
therefore interchanges its two edges and fixes its exceptional
vertex. A tree with two edges is a path of length two, and its
edge-interchanging automorphism fixes precisely the middle vertex.
Thus the exceptional vertex is the middle vertex of \(\Gamma\) and
\(\Gamma\) is a star with its exceptional vertex at the center.

Let \(c_D\) be the Brauer correspondent of \(c\) in \(N_H(D)\). Its
Brauer tree is the two-edge star with exceptional multiplicity \(2\)
and the exceptional vertex at the center; see
\cite[Theorem~11.2.2]{L}. Thus \(c\) and \(c_D\) have the same planar
Brauer tree and exceptional multiplicity. It follows from
\cite[Theorem~11.1.16]{L} that they are Morita equivalent, and
\cite[Remark~11.14.4]{L} shows that this equivalence is basic.
Therefore \(c\) is inertial. We have proved in both cases that
\(c\) is inertial.
\end{proof}

\begin{lem}\label{2B2}
Proposition \ref{Pro:Quai-simple-1} holds when \(H\) is a simple group \({}^2B_2(2^{2n+1})\), \({}^2G_2(3^{2n+1})\), or \({}^2F_4(2^{2n+1})\) for some \(n \geq 1\).
\end{lem}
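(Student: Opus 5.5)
We handle the Suzuki and Ree groups directly, using their known block structure instead of the Jordan-decomposition machinery of Section~\ref{Sec:3}. These groups have trivial Schur multiplier, except for the finitely many cases excluded by $n\geq1$; also ${}^2F_4(2)'$ was already treated in Lemma~\ref{Al}. Hence $H$ is simple and $\mathbf Z(H)=1$. Moreover $\operatorname{Out}(H)$ is cyclic of order $2n+1$, generated by field automorphisms.

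\textbf{Step 1: defining characteristic.} If $p$ is the defining characteristic, then Lemma~\ref{Al} (case 4.3.1) applies directly. So assume $p\neq\ell$, where $\ell\in\{2,3\}$ is the defining characteristic.

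\textbf{Step 2: show $\tilde c$ is nilpotent or $\tilde H=H$.} We use the known $p$-block classification for $p\neq\ell$: for ${}^2B_2$ and ${}^2G_2$ by Suzuki and Ward, and for ${}^2F_4$ by Malle's description of unipotent blocks together with Jordan decomposition.

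\begin{itemize}
\item Suppose $c$ is unipotent of positive defect. Then $c$ is the principal block, or, for ${}^2F_4$, one of a few blocks with cyclic or abelian defect. The orbit hypothesis then forces the unipotent characters of $c$ to form an $\Aut(H)_c$-stable basic set of equal degree, via \cite[Lemma~3.2]{MNS}.
\item By the degree tables, the principal block always contains unipotent characters of different degrees (e.g. $1$ and the Steinberg character). Hence $c$ is not principal, unless $c$ has a cyclic Sylow defect group with $|\IBr(c)|$ equal to the number of field-automorphism orbits. This case must be checked against the Brauer trees. Field automorphisms fix unipotent characters, so transitivity forces $|\IBr(c)|=1$. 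This leaves $c$ nilpotent, or contradicts the known trees.
\item Suppose $c$ is non-unipotent. Then $c$ lies in $\mathcal E_p(H,s)$ with $C_{H^*}(s)$ a torus or of small type. Bonnafé--Rouquier (the argument of Lemma~\ref{Good}, which works because $p$ is good for the relevant Levi subgroups, a torus or $A_1$/${}^2B_2$) yields that $c$ is nilpotent with abelian defect.
\end{itemize}

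In the nilpotent case $\tilde c$ is nilpotent, hence inertial. So the remaining non-nilpotent case has $p\mid|\operatorname{Out}(H)|=2n+1$.

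\textbf{Step 3: conclude.} If $p\nmid 2n+1$, the argument at the end of the proof of Lemma~\ref{Al} applies verbatim: $C_{\tilde H}(H)$ is a normal $p$-group, so $\mathbf O_p(\tilde H)=1$ forces $\tilde H=H$. Then $c$ has cyclic defect with $|\IBr(c)|=e$, and the Brauer-tree argument shows it is a star with the exceptional vertex at the center. Hence $c$ is basically Morita equivalent to its Brauer correspondent, by \cite[Theorem~11.1.16 and Remark~11.14.4]{L}.

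If $p\mid 2n+1$, then $\tilde H\leq H\rtimes\langle\phi\rangle$ for a field automorphism $\phi$ of $p$-power order. We treat this via Lemma~\ref{Defect} and \cite[Corollary]{Zhou} type arguments, i.e.\ by comparing with the block of the subfield group centralized by $\tilde P$, in the spirit of \cite{ZZ1}.

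\textbf{Main obstacle.} The hardest step is the case-by-case verification in Step~2 for ${}^2F_4(q^2)$ with $p\mid q^4\pm\sqrt2q^3+q^2\pm\sqrt2q+1$ and similar cyclotomic factors. There the unipotent blocks have cyclic defect with several unipotent characters. I would first rely on \cite[Proposition~3.11]{MNS}-style arguments for bad and non-generic primes before redoing the Brauer trees directly.
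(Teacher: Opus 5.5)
\textbf{There is a genuine gap at the decisive step.}

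The branch $p\mid 2n+1$ in your Step~3, where $\tilde P$ may induce a field automorphism of $p$-power order, is not argued at all. Pointing to Lemma~\ref{Defect}, \cite{Zhou} and a subfield-group comparison ``in the spirit of'' \cite{ZZ1} does not show that $\tilde c$ is inertial for a non-nilpotent $c$, and nothing in the paper supplies such a descent. Step~2 does not reduce you to that branch either: ``so the remaining non-nilpotent case has $p\mid 2n+1$'' does not follow from what precedes it, and Step~3 then treats non-nilpotent $c$ with $p\nmid 2n+1$ anyway.

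The missing idea is that for every odd non-defining $p$ the block $c$ itself is nilpotent. Then $\tilde c$ is nilpotent for any $\tilde H$, and $p\mid 2n+1$ never needs separate treatment. The lever is parity: $\Aut(H)_c$ acts on $\IBr(c)$ through a subgroup of the cyclic group $\operatorname{Out}(H)$ of odd order $2n+1$, so $|\IBr(c)|$ is odd.
\begin{itemize}
\item For ${}^2G_2$ with $p>3$, the Brauer trees \cite{HG} give $|\IBr(c)|$ even or $1$.
\item For ${}^2B_2$, a non-principal block has all ordinary characters of equal degree \cite{Bu}, so it is nilpotent by \cite[Theorem~4.1]{MN}.
\item For ${}^2F_4$, the $\Aut(H)_c$-stable basic set $\Irr(c)\cap\cE(H,s)$ is a single orbit of equal-degree characters. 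It is a basic set by \cite[Theorem~14.4]{CE} when $p>3$, and by \cite[Corollary~3.8]{MNS} when $p=3$, since then $s\ne1$ is not quasi-isolated. Together with \cite[Table~A.2]{Hi}, or \cite[Theorem~4.5.11]{GM} for unipotent blocks, this forces $|\IBr(c)|=1$. Unipotent blocks at $p=3$ have defect zero by the proof of \cite[Proposition~3.11]{MNS}.
\end{itemize}
Even then, $|\IBr(c)|=1$ does not give nilpotency when defect groups are non-cyclic, for example ${}^2F_4(2^{2n+1})$ at primes $p>3$ dividing $2^{2n+1}-1$. There the paper uses \cite[2C(a)]{A1} to get $\cE(H,s)\subseteq\Irr(c)$, so $C_H(s)$ is a torus and \cite[Lemma~4.2]{EKK} applies. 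For $s$ of type $t_1$ it uses a Bonnaf\'e--Rouquier reduction to ${}^2B_2$. Your Step~2 skips all of this. Your ``main obstacle'' singles out the cyclic cases, which the parity argument settles at once, rather than $p=3$ and the non-cyclic abelian cases.

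\textbf{The case $p=2$ for ${}^2G_2$ is mishandled.}

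Your Brauer-tree argument is fine for cyclic defect once $\tilde H=H$. But it does not cover $p=2$ for $H={}^2G_2(3^{2n+1})$: you correctly get $\tilde H=H$, then assert cyclic defect, whereas the Sylow $2$-subgroups are elementary abelian of order $8$. What is needed is the following.
\begin{itemize}
\item The principal block is the only block with defect groups of order $8$ \cite[Theorem~1.3]{KK}.
\item The principal block is excluded by the orbit hypothesis: its trivial Brauer character is $\Aut(H)$-fixed, which would make $H$ $2$-nilpotent.
\item Hence a non-nilpotent $c$ has Klein four defect groups and $|\IBr(c)|=3$, and \cite[Theorem~12.1.2]{L} gives a basic Morita equivalence with $\mathcal O\mathfrak A_4$.
\end{itemize}
Your Step~2 cannot rule such blocks out. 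The claim that unipotent blocks of positive defect are principal is unsupported. The route through \cite[Lemma~3.2]{MNS} needs the unipotent characters to form a basic set, which requires $p$ good; this fails for $p=2$ in ${}^2G_2$ and $p=3$ in ${}^2F_4$.

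\textbf{Lemma~\ref{Good} is not available for these groups.}

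Section~\ref{Sec:3} assumes $F$ is a Frobenius endomorphism defining an $\mathbb F_q$-structure. Lemma~\ref{Good} rests on Lemma~\ref{Jordan} and the twin-block analysis of \cite{KM2} in that setting. This is exactly why the Suzuki and Ree groups are handled separately before Proposition~\ref{All} is invoked. Moreover, for ${}^2F_4$ with $p>3$ the isolated elements $s$ of order $3$ lie in no proper Levi subgroup, so Bonnaf\'e--Rouquier has nothing to reduce to.

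Incidentally, ${}^2B_2(8)$ (the case $n=1$) does have an exceptional Schur multiplier. This is harmless only because the lemma assumes $H$ is simple.
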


\begin{proof}
Suppose \(p = 2\). By \cite[\S 4]{W}, the outer automorphism group
\(\operatorname{Out}(H)\) is cyclic of odd order \(2n+1\). Hence the
image of the \(2\)-group \(\tilde H/H\) in \(\operatorname{Out}(H)\)
is trivial. The centralizer argument in the proof of Lemma~\ref{Al},
together with \(\mathbf O_2(\tilde H)=1\), therefore gives
\(\tilde H=H\). If
\(H \cong {}^2B_2(2^{2n+1})\) or
\({}^2F_4(2^{2n+1})\), then Lemma~\ref{Al} shows that \(c\) is
inertial. Suppose now that \(H \cong {}^2G_2(3^{2n+1})\) and that
\(c\) is non-nilpotent. Since the principal block of \(H\) is the
unique block with an elementary abelian defect group of order \(8\)
\cite[Theorem~1.3]{KK} and \(c\) is non-principal, its defect group is
a Klein four group, \(|\IBr(c)|=3\), and all its
irreducible Brauer characters have height zero. By
\cite[Theorem~12.1.2]{L} and the argument following
\cite[Corollary~12.1.5]{L}, \(c\) is basically Morita equivalent to
\(\mathcal O\mathfrak A_4\), and hence is inertial.

Now suppose \(p > 2\). We will show that \(c\) is nilpotent. Then $\tilde c$ is nilpotent.

First let \(H = {}^2G_2(3^{2n+1})\). By Lemma \ref{Al}, we may assume \(p > 3\). Sylow \(p\)-subgroups of \(H\) are cyclic. By the Brauer trees described in \cite[\S 4.1]{HG}, the number of irreducible Brauer characters in $c$ is either even or exactly one. Since \(\IBr(c)\) is a single \(\Aut(H)_c\)-orbit and \(|\mathrm{Out}(H)| = 2n + 1\), we have \(|\IBr(c)| = 1\). So \(c\) is nilpotent.

Next let \(H = {}^2B_2(2^{2n+1})\). Sylow \(p\)-subgroups of \(H\) are cyclic. Since \(\IBr(c)\) is a single \(\Aut(H)_c\)-orbit, \(c\) is not principal. By \cite[\S 2]{Bu}, all irreducible ordinary characters in \(c\) have equal degree. By \cite[Theorem 4.1]{MN},  \(c\) is nilpotent.

Finally{,} let \(H = {}^2F_4(2^{2n+1})\). We may identify $H$ with its dual and let \(s \in H\) be a {semisimple} \(p'\)-element such that \(\Irr(c) \cap \mathcal{E}(H, s) \neq \emptyset\).

Assume that $p = 3$ and $c$ is unipotent.  By the proof of \cite[Proposition 3.11]{MNS}, $c$ is of defect zero.

Assume that $p\neq 3$ or that $c$ is not unipotent.
We claim that \(|\IBr(c)|=1\).
Suppose that $c$ is not unipotent. By the proof of \cite[Lemma 3.5]{MNS}, \(\Irr(c)\cap\mathcal{E}(H, s)\) is \(\Aut(H)_c\)-stable. When $p>3$, by \cite[Theorem 14.4]{CE}, \(\Irr(c)\cap\mathcal{E}(H, s)\) is a basic set of the block $c$. When $p=3$, by the proof of \cite[Proposition 3.11]{MNS}, \(1\) is the unique quasi-isolated semisimple \(3'\)-element, hence $s$ is not quasi-isolated. Then it follows from \cite[Corollary 3.8]{MNS} that
\(\Irr(c)\cap\mathcal{E}(H, s)\) is a basic set of the block $c$.
Since \(\IBr(c)\) is a single \(\Aut(H)_c\)-orbit, by \cite[Lemma 3.2]{MNS}, \(\Aut(H)_c\) has only one orbit on \(\Irr(c)\cap\mathcal{E}(H, s)\). In particular, all characters in \(\Irr(c)\cap\mathcal{E}(H, s)\) have the same degree. But
\cite[Table A.2]{Hi} {shows that} \(\mathcal{E}(H, s)\) {has} at most two characters of equal degree. Therefore{,} \(|\IBr(c)|=|\Irr(c)\cap\mathcal{E}(H, s)|\leq 2\).
Since \(|\mathrm{Out}(H)|=2n+1\) is odd, we have \(|\IBr(c)|=1\).
Suppose $p>3$ and $c$ is unipotent. Since $p$ is good for $H$, as above, we prove that \(\Aut(H)_c\) has only one orbit on \(\Irr(c)\cap\mathcal{E}(H, s)\).
By \cite[Theorem 4.5.11]{GM}, we have \(|\IBr(c)|=1\). {This proves the claim.}

By \cite[Corollary 11.1.4]{L}, $c$ is nilpotent if $c$ is a cyclic block. In the remainder, we assume that $c$ is not a cyclic block. We adopt the notation for types of \(s\) from Table~IV in \cite{Sh}.
Suppose that $p$ does not divide \((2^{2n+1} - 1)\) or that \(s\) is {of neither type \(t_0=1\) nor type \(t_1\)}.
By \cite[2C(a)]{A1}, we have $\mathcal{E}(H, s) \subseteq \Irr(c)$.
{As shown in the preceding paragraph}, we have $|\mathcal{E}(H,s)|=1$. By \cite[Theorem 2.6.4]{GM}, $C_H(s)$ has only one unipotent character, hence \(C_H(s)\) is a torus. Now, by \cite[Lemma 4.2]{EKK}, \(c\) is nilpotent with abelian defect groups.

Suppose  \(p \mid (2^{2n+1} - 1)\) and that \(s\) is of type \(t_0=1\) or \(t_1\). We have $|\Irr(c) \cap \mathcal{E}(H, s)|=1$ as above.
By \cite[Bemerkung 1(c)]{M}, $c$ is not unipotent.
Then \(s\) is of type \(t_1\) and it follows from \cite[Page 10]{Sh} that we have \(C_H(s) \cong {}^2B_2(2^{2n+1}) \times \mathbb{Z}_{2^{2n+1}-1}\).
Let \(d\) be the block of \(C_H(s)\) corresponding to \(c\) through the Bonnaf\'e-Rouquier correspondence.
Then \(|\IBr(d)|=1\) and hence the block of \({}^2B_2(2^{2n+1})\) dominated by $d$ also has exactly one irreducible Brauer character. The \({}^2B_2(2^{2n+1})\) case implies that the latter block is nilpotent with abelian defect groups and thus that \(d\) is nilpotent with abelian defect groups. So \(c\) is nilpotent with abelian defect groups.
\end{proof}

\begin{proof}[\rm\bf Proof of Proposition \ref{Pro:Quai-simple-1}]

We keep the notation and the assumptions in Proposition \ref{Pro:Quai-simple-1}.
By {Lemmas~\ref{Al}--\ref{2B2}}, we may assume that there is a simple simply connected algebraic group over $\overline{\mathbb{F}}_\ell$ (\(\ell \neq p\)) with a Frobenius endomorphism \(F\) such that $H=G/Z$ for some subgroup $Z\leq \mathbf{Z}(G)$. Since \(\mathbf{O}_p(\tilde{H}) \cap H = 1\), we have \(\mathbf{O}_p(G) \leq Z\). Let \(b\) be the block of \(G\) dominating \(c\). By \cite[Corollary B.8]{N2}, we have \(\Aut(H) \cong \Aut(G)_Z\) and therefore \(\IBr(b)\) is a single \(\Aut(G)_b\)-orbit.

Let \(\iota:\bG\hookrightarrow \tilde{\bf G}\) be a regular embedding.
Identifying \(\bG\) with its image \(\iota(\bG)\), we have \(\tilde{\bf G}=\bG\mathbf{Z}(\tilde{\bf G})\).
The Frobenius endomorphism \(F\) on \(\bG\)
may be extended to a Frobenius endomorphism on \(\tilde{\bf G}\), which we again denote by \(F\).
Set \(\tilde{G}=\tilde{\bf G}^F\) and
let \(\tilde{b}\) be a block of \(\tilde{G}\) covering \(b\).
If \(b\) is nilpotent, then $c$ and hence $\tilde c$ are nilpotent. Otherwise, by Proposition \ref{All}, \(\tilde{b}\) is nilpotent.

Denote by \(\operatorname{Ker}(c)\) the kernel of the block \(c\). {Since $\tilde P$ is a defect group of $\tilde c$, it stabilizes $c$. Thus $c$ is $\tilde H$-invariant and, as $\tilde H/H$ is a $p$-group, $c$ is also the unique block of $\tilde H$ covering $c$; we therefore identify it with $\tilde c$.} {Clifford theory yields} $\operatorname{Ker}(c)\leq\operatorname{Ker}(\tilde c)$. Conversely, $\operatorname{Ker}(\tilde c)$ is a $p'$-group and hence is contained in $H$. {Another application of Clifford theory gives} $\operatorname{Ker}(\tilde c)\leq\operatorname{Ker}(c)$. {Therefore},
we have $\operatorname{Ker}(c)=\operatorname{Ker}(\tilde c)$.

Set $\bar H=H/\operatorname{Ker}(c)$ and $\bar{\tilde H}=\tilde H/\operatorname{Ker}(\tilde c)$ and denote by
$\bar c$ and $\bar {\tilde c}$ the images of $c$ and $\tilde c$ in $\bar H$ and $\bar{\tilde H}$. The natural homomorphisms $H\rightarrow \bar H$ and $\tilde H\rightarrow \bar{\tilde H}$ induce algebra isomorphisms
\(\mathcal{O}Hc \cong \mathcal{O}\bar{H}\,\bar{c}\) and \(\mathcal{O}\tilde H\tilde c \cong \mathcal{O}\bar{\tilde H}\,\bar{\tilde c}\). Thus, in order to prove that \(\tilde{c}\) is inertial, we may assume that \(\operatorname{Ker}(\tilde{c}) = \operatorname{Ker}(c) = 1\). Then, since the block \(b\) dominates the block \(c\), we have \(\operatorname{Ker}(b) = \mathbf{O}_{p'}(Z)\).

By \cite[Theorem 2.5.1]{GLS}, \(\Aut(G)\) is induced by \(\tilde{G} \rtimes \mathcal{D}\) and \(C_{\tilde{G}}(G) = \mathbf{Z}(\tilde{G})\), where \(\mathcal{D}\) denotes the group generated by suitable graph and field automorphisms of \(G\). In particular, \(\Aut(G)\) is isomorphic to \((\tilde{G}/\mathbf{Z}(\tilde{G})) \rtimes \mathcal{D}\). Since \(H = G/Z\), by \cite[Corollary B.8]{N2}, \(\Aut(H)\) is induced by \(\tilde{G} \rtimes \mathcal{D}_Z\) and is isomorphic to \((\tilde{G}/\mathbf{Z}(\tilde{G})) \rtimes \mathcal{D}_Z\), where \(\mathcal{D}_Z\) denotes the stabilizer of \(Z\) in \(\mathcal{D}\).

Set \(\check{G} = \tilde{G}/(Z\mathbf{O}_{p}(\mathbf{Z}(\tilde{G})))\) and \(\check{A}= (\tilde{G }\rtimes \mathcal{D}_Z)/(Z\mathbf{O}_{p}(\mathbf{Z}(\tilde{G})))\). Since we have \(\mathbf{O}_p(\mathbf{Z}(\tilde{G})) \cap G \subseteq \mathbf{O}_p(G)\subseteq Z \),
the inclusion $G\subseteq G\mathbf{O}_p(\mathbf{Z}(\tilde{G}))$ induces an isomorphism
$$
H = G/Z \cong (G\mathbf{O}_p(\mathbf{Z}(\tilde{G})))/(Z\mathbf{O}_p(\mathbf{Z}(\tilde{G}))),
$$
through which we may identify \(H\) with a normal subgroup of \(\check{A}\).
By Lemma \ref{embedding} below, we may identify $\tilde H$ with a subgroup of $\check A$ so that $H$ is normal in $\check A$.

Since \(\tilde{G}/G\) is abelian, so is \(\check G/H\). Let \(\check K\) be the normal subgroup of \(\check G\) containing \(H\) such that \(\check K/H\) is a \(p'\)-group and \(\check G/\check K\) is a \(p\)-group. Since \(\operatorname{Ker}(b) = \mathbf{O}_{p'}(Z)\), we have \(Z\mathbf{O}_{p}(\mathbf{Z}(\tilde{G})) = \operatorname{Ker}(b)\mathbf{O}_{p}(\mathbf{Z}(\tilde{G}))\). Since the block \(\tilde{b}\) covers \(b\), we have \(\operatorname{Ker}(\tilde{b}) \geq \operatorname{Ker}(b)\). Hence \(\tilde{b}\) dominates the unique block \(\check b\) of \(\check G\). Since \(\tilde{b}\) covers \(b\) and \(b\) dominates \(c\), \(\check b\) covers \(c\). Let \(\check d\) be a block of \(\check K\) that covers \(c\) and is covered by \(\check b\). Since \(\tilde{b}\) is nilpotent, so are \(\check b\) and \(\check d\). Let \(\operatorname{Bl}(\check K|c)\) be the set of blocks of \(\check K\) covering \(c\). Since \(\check K/H\) is abelian, by \cite[Lemma 2.2 (a)]{KM}, the cardinality of \(\operatorname{Bl}(\check K|c)\) is a \(p'\)-number and every block in \(\operatorname{Bl}(\check K|c)\) is nilpotent.

Clearly $\tilde P$ normalizes $\check G$ and $\check K$, thus \(\check K\) is normal in \(\tilde{P}\check K\). Since \(\tilde{P}\) stabilizes \(c\), it stabilizes the set \(\operatorname{Bl}(\check K|c)\) and hence a block \(\check f\) in \(\operatorname{Bl}(\check K|c)\). In particular, this block \(\check f\) is a block of \(\tilde{P}\check K\) with defect group \(\tilde{P}\) and it is nilpotent. Set \(L = HN_{\tilde P\check K}(\tilde{P})\) and let \(e\) be the Brauer correspondent of \(\check f\) in \(L\). Since \(\tilde{P}\) is abelian, the Brauer categories of \(\check f\) and \(e\) are the same. Hence the block \(e\) is nilpotent. Moreover, \(e\) covers the block \(\tilde{c}\). By \cite[Theorem 3.13]{P2}, the block \(\tilde{c}\)  is inertial.
\end{proof}

\begin{lem}\label{embedding}
Keep the notation and the assumptions in Proposition~4.2 and its proof. Then there exists an injective homomorphism $\tilde H\rightarrow \check A$
whose restriction to \(H\) is the identity.
\end{lem}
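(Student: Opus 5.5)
The plan is to compare two central extensions of the same subgroup of $\Aut(H)$ and to glue them along $H$ by a coprime (Schur--Zassenhaus) argument. Throughout we assume, as in the proof of Proposition~\ref{Pro:Quai-simple-1}, that $\tilde H=\tilde PH$, that $\mathbf O_p(\tilde H)=1$ and that $\operatorname{Ker}(c)=1$.

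\emph{Step 1: $\tilde H$ acts faithfully on $H$ modulo $\mathbf Z(H)$.} We repeat the centralizer argument from the proof of Lemma~\ref{Al}. Set $C=C_{\tilde H}(H)$. Since $\mathbf O_p(\mathbf Z(H))\le\mathbf O_p(\tilde H)=1$, the group $\mathbf Z(H)$ is a $p'$-group. Moreover $C\cap H=\mathbf Z(H)$, so $C/\mathbf Z(H)$ embeds in $\tilde H/H$ and is a $p$-group. By Schur--Zassenhaus, $C=\mathbf Z(H)\times Q$ with $Q$ characteristic in $C$, hence $Q\trianglelefteq\tilde H$ and so $Q=1$. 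Thus $C_{\tilde H}(H)=\mathbf Z(H)$. Let $X\le\Aut(H)$ be the image of $\tilde H$ under conjugation. Then $\tilde H/\mathbf Z(H)\cong X$, and $X/\Inn(H)$ is a $p$-group.

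\emph{Step 2: a copy of $X$ inside $\check A$.} By the description of $\Aut(H)$ in the proof of Proposition~\ref{Pro:Quai-simple-1}, conjugation gives a surjection $\check A\to\Aut(H)$. Its kernel $K=C_{\check A}(H)$ is the image of $\mathbf Z(\tilde G)$, because $C_{\tilde G}(G)=\mathbf Z(\tilde G)$. Hence $K$ is central in the preimage and is a $p'$-group, since $\mathbf O_p(\mathbf Z(\tilde G))$ has been factored out. Let $Y\le\check A$ be the full preimage of $X$. Then $Y/HK\cong X/\Inn(H)$ is a $p$-group, and $HK/H\cong K/(K\cap H)$ is a central $p'$-subgroup of $Y/H$. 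By Schur--Zassenhaus, $Y/H$ has a unique (hence normal) complement $Y_1/H$ to $HK/H$. Then $Y_1\cap K=H\cap K=\mathbf Z(H)$, and $Y_1\to X$ is surjective with kernel $\mathbf Z(H)$. So $\tilde H$ and $Y_1$ are both extensions of $X$ by $\mathbf Z(H)$ that contain $H$ compatibly.

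\emph{Step 3: gluing; this is the main point.} Form the fibre product $W=\tilde H\times_X Y_1$. It contains the diagonal $\Delta H$ and the central subgroup $\mathbf Z(H)\times\mathbf Z(H)$, and we have $W/\Delta H\cdot(\mathbf Z(H)\times\mathbf Z(H))\cong X/\Inn(H)$, a $p$-group. Inside the group $W/\Delta H$, the subgroup $(\mathbf Z(H)\times\mathbf Z(H))\Delta H/\Delta H$ is central, of $p'$-order, and has $p$-group quotient. Applying Schur--Zassenhaus once more, let $S/\Delta H$ be the unique normal complement to it. Then $S\cap(\mathbf Z(H)\times1)\le\Delta H\cap(\mathbf Z(H)\times 1)=1$, and likewise $S\cap(1\times\mathbf Z(H))=1$.

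\emph{Step 4: the desired homomorphism.} Both projections $S\to\tilde H$ and $S\to Y_1$ are therefore injective. Comparing orders, $|S|=|H|\,|X:\Inn(H)|=|\tilde H|=|Y_1|$, so both projections are isomorphisms. The composite $\tilde H\cong S\cong Y_1\hookrightarrow\check A$ is the required injective homomorphism, and it restricts to the identity on $H$ because $\Delta H\le S$. The delicate point I expect is Step~2: verifying precisely that $C_{\check A}(H)$ is a central $p'$-group and that $K\cap H=\mathbf Z(H)$. This needs the identification $\Aut(H)\cong(\tilde G/\mathbf Z(\tilde G))\rtimes\mathcal D_Z$ from \cite[Corollary B.8]{N2} and \cite[Theorem 2.5.1]{GLS}, together with the fact that $\mathcal D_Z$ acts faithfully modulo inner-diagonal automorphisms.
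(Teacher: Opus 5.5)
Your proof is correct in substance, but it takes a different route from the paper's. Both proofs begin with the same centralizer argument giving $C_{\tilde H}(H)=\mathbf Z(H)$. The paper then uses $\tilde H=\tilde PH$ directly. Since $\ker\rho$ is a $p'$-group, a Sylow $p$-subgroup $\check P\supseteq P$ of $\rho^{-1}(\tau(\tilde P))$ maps isomorphically onto $\tau(\tilde P)$. This yields $\lambda\colon\tilde P\to\check P$ fixing $P$ pointwise, with $\check P\cap H=P$, and $\Phi(hx)=h\lambda(x)$ is then checked by hand to be a well-defined injective homomorphism. You instead build a second extension $Y_1\le\check A$ of $X$ by $\mathbf Z(H)$ containing $H$, and glue it to $\tilde H$ through the fibre product $W$ and a complement $S/\Delta H$. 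The paper's argument is shorter and needs only Sylow's theorem. Yours trades the explicit well-definedness check for two complement arguments, and in return makes $\rho\circ\Phi=\tau$ and $\Phi|_H=\mathrm{id}$ automatic.

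One correction is needed. In Step~2, $HK/H$ need not be central in $Y/H$, so the complement $Y_1/H$ need not be unique or normal. Lifts of field automorphisms can act nontrivially on the image of $\mathbf Z(\tilde G)$, and the group-theoretic facts you invoke do not rule this out. For instance, take $H=\mathrm{SL}_2(8)$, $p=3$, and $\tilde H=H\rtimes\langle\sigma\rangle$ with $\sigma$ the field automorphism. Then $\check A=\mathrm{GL}_2(8)\rtimes\langle\sigma\rangle$, and $Y/H\cong C_7\rtimes C_3$, which has seven complements to $C_7$.

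Similarly, centrality of $\mathbf Z(H)\times\mathbf Z(H)$ in $W$ amounts to $\tilde H$ centralizing $\mathbf Z(H)$. This does hold here: since $\operatorname{Ker}(c)=1$, the block $c$ lies over a faithful linear character of $\mathbf Z(H)$, and $\tilde P$ fixes it. But you did not justify it.

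Neither claim is needed. Set $M=\Delta H\,(\mathbf Z(H)\times\mathbf Z(H))$; it is normal in $W$ because $\Delta H\trianglelefteq W$ and $\mathbf Z(H)\times\mathbf Z(H)$ is the kernel of $W\to X$. Then $HK/H\trianglelefteq Y/H$ and $M/\Delta H\trianglelefteq W/\Delta H$ are normal Hall subgroups, so the existence part of Schur--Zassenhaus provides complements $Y_1/H$ and $S/\Delta H$. Any such choice gives $Y_1\cap K=\mathbf Z(H)$ and $S\cap(\mathbf Z(H)\times 1)=S\cap(1\times\mathbf Z(H))=1$, and the rest of your argument goes through unchanged.
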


\begin{proof}
Let \(\tau\colon\tilde H\to\Aut(H)\) be induced by conjugation. We first claim that
$C_{\tilde H}(H)={\mathbf Z(H)}$.
Indeed, \(C_{\tilde H}(H)/{\mathbf Z(H)}\) is a \(p\)-group whereas {\(\mathbf Z(H)\)}
is a central \(p'\)-subgroup. By the Schur--Zassenhaus theorem,
\(C_{\tilde H}(H)\) has a normal Sylow \(p\)-subgroup, which is
contained in \({\bf O}_p(\tilde H)=1\). The claim follows. Consequently,
$
    \tau|_{\tilde P}$ is injective
  and
    $\tau(\tilde P)\cap\Inn(H)=\tau(P)$.

Let
$
    \rho\colon\check A\rightarrow\Aut(H)
$
be the natural homomorphism induced by {the conjugation action of $\check A$}. The kernel of $\rho$ is a \(p'\)-group. Choose a Sylow $p$-subgroup $\check P$ of $\rho^{-1}(\tau(\tilde P))$, which clearly contains
\(P=\tilde P\cap H\). Then \(\rho|_{\check P}\) is an isomorphism onto
\(\tau(\tilde P)\). Hence
$
    \lambda
      =(\rho|_{\check P})^{-1}\circ\tau|_{\tilde P}
      \colon\tilde P\xrightarrow{\sim}\check P
$
is identical on $P$. Since $\tau(\tilde P)\cap\Inn(H)=\tau(P)$, we have
\(\check P\cap H=P\).

Define
$
    \Phi:\tilde H\rightarrow\check A, {hx\mapsto h\lambda(x)}
$
for \(h\in H\) and \(x\in\tilde P\). Since \(\lambda\) fixes
\(P=H\cap\tilde P\) and \(\lambda(x)\) induces the
same automorphism on \(H\) as \(x\) does, the map \(\Phi\) is a well-defined
homomorphism. The equality \(\check P\cap H=P\) implies
\(\ker(\Phi)=1\). Thus \(\Phi\) is the desired homomorphism.
\end{proof}

Finally, we close this section with the proof of Proposition \ref{Quasi-one}.

\begin{proof}[\rm\bf Proof of Proposition \ref{Quasi-one}] The kernel $K=\operatorname{Ker}(b)$ is a central $p'$-subgroup of $G$.
The block $b$ dominates a faithful block $\bar{b}$ of $G/K$ and the defect
groups of $b$ and $\bar{b}$ are naturally isomorphic. Moreover,
$\Aut(G)_b$ stabilizes $K$ and induces a transitive action on
$\IBr(\bar{b})$. Hence we may replace $G$ by $G/K$ and assume
that $b$ is faithful.

Suppose that \(G/\mathbf{Z}(G)\) is one of the following: a sporadic simple group, an alternating group \(\mathfrak{A}_{n}\) (\(n \geq 5\)), a simple group of Lie type in the defining characteristic \(p\), or the Tits group \({}^2F_4(2)^{\prime}\). Then this proposition follows from \cite[Proposition 2.6, Theorem 2.8 and Proposition 3.1]{MNS}.
Suppose that \(G\) is an exceptional covering group of Lie type. Then this proposition follows from
\cite[Proposition 2.11]{MNS}.

Suppose that \(G\) is a simple group of the form \({}^2B_2(2^{2n+1})\), \({}^2G_2(3^{2n+1})\), or \({}^2F_4(2^{2n+1})\) for some integer \(n \geq 1\), and that \(p\) is not the defining characteristic of \(G\). If the Sylow \(p\)-subgroup of $G$ is abelian, then the defect group of \(b\) is abelian.
Thus we may assume that \(p = 3\) and \(G \cong {}^2F_4(2^{2n+1})\) with \(n \geq 1\).
By the proof of the case ${}^2F_4(2^{2n+1})$ in Lemma~\ref{2B2}, the defect group of \(b\) is abelian in this case.

By the classification of finite simple groups, for the proof of the remaining case, we may write
$G=G_{0}/Z$ and $G_{0}=\mathbf{G}^{F}$,
where $\mathbf{G}$ is a simple simply connected algebraic group, $F$ is {a} Frobenius endomorphism on $\mathbf{G}${,} and
$Z\leq {\mathbf Z(G_{0})}$. Let $b_{0}$ be the block of $G_{0}$ dominating $b$. It is known that the natural homomorphism
$G_{0}\rightarrow G$ induces a bijection between $\IBr(b_{0})$ and $\IBr(b)$.
By
\cite[Corollary~B.8]{N2}, the correspondence between $b$ and $b_{0}$ is
compatible with automorphisms; hence $\IBr(b_{0})$ is a single
$\Aut(G_{0})_{b_{0}}$-orbit.

Choose a regular embedding
\[
  \mathbf{G}\hookrightarrow\tilde{\mathbf{G}}
\]
with compatible Frobenius endomorphism $F$ and let $\tilde{b}_{0}$ be a block
of $\tilde{G}_{0}=\tilde{\mathbf{G}}^{\,F}$ covering $b_{0}$ in the
corresponding Lusztig series. By Proposition~\ref{All}, either $b_{0}$ or $\tilde{b}_{0}$ is
nilpotent with abelian defect groups. {In either case}, $b_{0}$ has {an} abelian defect group.
Since $b_0$ dominates $b$ through the natural homomorphism $G_0\rightarrow G$, the defect groups of $b$ are
images of defect groups of $b_{0}$ under the homomorphism
$G_{0}\to G$. {Hence} $b$ has {an} abelian defect group. {This completes the proof.}
\end{proof}

\section{Proof of Theorem \ref{Main1} and \ref{Main2}}
In this section, we give proofs of  Theorem \ref{Main1} and \ref{Main2}.

Let $G$ be a finite group and $b$ a block of $G$.
Let $Z\trianglelefteq G$ be a normal subgroup. A block $\bar{b}$ of the quotient group $G/Z$ is said to be contained in $b$ if
\(
\IBr(\bar{b})\subseteq\IBr(b),
\)
where the irreducible Brauer characters in $\IBr(\bar{b})$ are inflated to characters of $G$. For $\theta\in \Irr(Z)$, we denote by $\Irr(G\mid\theta)$ the set of irreducible characters of $G$ lying above~$\theta$. Similarly, for any $\psi\in \IBr(Z)$, we write {$\IBr(G\mid\psi)$} for the set of irreducible Brauer characters of $G$ lying above~$\psi$.
Given any $\chi\in\Irr(G)\cup \IBr(G)$, we denote by $\operatorname{bl}(\chi)$ the unique block of $G$ containing $\chi$.
Let $\operatorname{dz}(G)$ be the set of {defect-zero} characters in $\Irr(G)$.
Let $Q$ be a $p$-subgroup of $G$ and let $N_G(Q)$ be its normalizer in~$G$. We denote by $\operatorname{dz}(N_G(Q)/Q,\, b)$ the set of characters $\bar \chi$ in $\operatorname{dz}(N_G(Q)/Q)$ such that viewing $\bar \chi$ as a character of $N_G(Q)$, we have $\operatorname{bl}(\bar\chi)^G = b$.
Set $l(b)=|\IBr(b)|$ and denote by $\ell(b)$
the number of orbits of the $G$-action on the set of $p$-weights of $b$. The blockwise Alperin weight conjecture predicts $l(b)=\ell(b)$.
Fixing a defect group $P$ of $b$ and letting $\Gamma$ be a set of representatives for the $G$-conjugacy classes of $p$-subgroups of $P$,
we have the equality
\(\ell(b)=\sum_{Q\in\Gamma}\bigl|\operatorname{dz}(N_G(Q)/Q,\, b)\bigr|\).

\begin{lem}\label{Vertex}
Let $G$ be a finite group and let $b$ be a block of $G$ with defect group $P$.
Let $N$ be a normal subgroup of $G$ such that $b$ covers a $G$-invariant block $c$ of $N$.
Set $D = P \cap N$. Assume that $D$ is abelian and that $Q$ is a subgroup of $P$ such that ${\rm dz}(N_G(Q)/Q, b)$ is nonempty.
Then $D\leq Q$.
\end{lem}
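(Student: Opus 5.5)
The plan is to argue by contradiction, using the fact that the $p$-weight condition forces $Q=\mathbf O_p(N_G(Q))$ and that $Q$ is a defect group of the relevant block of $N_G(Q)$. We then push an element of $D$ (suitably conjugated) outside $Q$ into a defect group of a block of $N_N(Q)$ whose defect group must be $Q\cap N$. This contradicts $D\not\leq Q$.

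\textbf{Setup.} Take $\bar\psi\in{\rm dz}(N_G(Q)/Q,b)$ and let $B_Q=\bl(\bar\psi)$, a block of $N_G(Q)$ with $B_Q^G=b$. Since $\bar\psi$ has defect zero in $N_G(Q)/Q$ and $Q\trianglelefteq N_G(Q)$, the block $B_Q$ has defect group exactly $Q$. In particular $\mathbf O_p(N_G(Q))\leq Q$, hence $\mathbf O_p(N_G(Q))=Q$.

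\textbf{Brauer pairs and the covered block.} Because $B_Q^G=b$, there is a block $e_Q$ of $C_G(Q)$ with $(Q,e_Q)$ a $b$-Brauer pair. Embed it in a maximal $b$-Brauer pair $(P',e')$, which we may $G$-conjugate to have $P'=P$; this uses that $Q\leq P$ and only moves $Q$ within $G$-conjugates, so the conclusion is unaffected. Set $R=Q\cap N$.
\begin{itemize}
\item Let $c_Q$ be a block of $N_N(Q)$ covered by $B_Q$. By \cite[Theorem~9.26]{N1}, $c_Q$ has defect group $Q\cap N_N(Q)=R$.
\item By the covering/induction compatibility of Brauer correspondence (the argument of \cite[Theorem~9.28]{N1}, using that $c$ is the unique, $G$-invariant block of $N$ covered by $b$), we get $c_Q^N=c$.
\end{itemize}
Moreover $D=P\cap N$ is a defect group of $c$, and $(R,\ast)\leq (D,\ast)$ as $c$-Brauer pairs compatibly with $(Q,e_Q)\leq(P,e')$.

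\textbf{Contradiction.} Suppose $D\not\leq Q$. Since $D\trianglelefteq P$, the product $DQ$ is a $p$-group properly containing $Q$. By ``normalizers grow'', the group $T=N_D(Q)$ is not contained in $Q$. As $D$ is abelian and $R\leq D$, $T$ centralizes $R$. We aim to show that $T$ is contained in a defect group of $c_Q$, which is impossible because that defect group is $R=Q\cap N$ and $T\leq N$ but $T\not\leq Q$. To get $T$ inside a defect group of $c_Q$, use that $T\leq C_N(R)\cap N_N(Q)$ stabilizes the Brauer pair data coming from $(P,e')$. Concretely:
\begin{enumerate}
\item The block $f$ of $C_N(R)$ attached to $(R,\ast)\leq(D,\ast)$ has a defect group containing $C_D(R)=D$, using that $D$ is abelian.
\item Pass from $f$ to $c_Q$ via the normal subgroup $C_{N_N(Q)}(R)\trianglelefteq N_N(Q)$ and \cite[Theorem~9.26]{N1}.
\end{enumerate}
Alternatively, one can observe directly that $TQ$ is a $p$-subgroup normalized by the defect-group data of $B_Q$, and conclude $TQ\leq\mathbf O_p(N_G(Q))=Q$.

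\textbf{Main obstacle.} The hard step is the last one: transferring the containment ``$D$ lies in a defect group of the block of $C_N(R)$'' down to ``$T$ lies in a defect group of $c_Q$'' inside $N_N(Q)$. It requires identifying $c_Q$ with the correct block lying over $f$, rather than some other block of $N_N(Q)$. I would first try to arrange this by choosing $c_Q$ as the block of $N_N(Q)$ covered by $B_Q$ and covering the restriction of $e_Q$ to $C_N(Q)$. I would then use the abelian hypothesis on $D$, which forces all the relevant centralizers to contain $D$, to compare defect groups.
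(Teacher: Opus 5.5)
There is a genuine gap, and it is in your first step. You claim that $B_Q=\bl(\bar\psi)$ has defect group exactly $Q$. This is false. A defect-zero character of $N_G(Q)/Q$ only forces $B_Q$ to cover a block $e_Q$ of $C_G(Q)$ with defect group $\mathbf Z(Q)$, so that $(Q,e_Q)$ is a centric $b$-Brauer pair. The block $B_Q$ itself can have a larger defect group.

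This already happens inside the hypotheses of the lemma. Take $p=2$, $G=S_4$, $N=A_4$ and $c$ the principal block of $A_4$, so $D=V_4$ is abelian. Let $Q=V_4$ and let $\bar\psi$ be the degree-$2$ character of $S_4/V_4\cong S_3$. Then $\bar\psi\in{\rm dz}(N_G(Q)/Q,b)$, but $B_Q$ is the unique $2$-block of $S_4$, whose defect group is $D_8$.

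What is true, and what the paper uses, is the following. After replacing $Q$ by a suitable $G$-conjugate $Q^x$, the group $N_P(Q^x)$ is a defect group of $(B_Q)^x$. The replacement is harmless: if $D\le Q^x$ then $D^{x^{-1}}\le Q\cap N\le P\cap N=D$. Consequently, by \cite[Theorem~9.26]{N1}, your block $c_Q$ of $N_N(Q)$ has defect group $N_P(Q)\cap N_N(Q)=N_D(Q)$, not $Q\cap N$.

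So your intended contradiction collapses. The part you call the main obstacle, ``$N_D(Q)$ lies in a defect group of $c_Q$'', is automatic. The other part, ``that defect group is $Q\cap N$'', is essentially the lemma itself. Your fallback also fails: you cannot conclude $N_D(Q)Q\le\mathbf O_p(N_G(Q))$, because $N_D(Q)Q$ need not be normal in $N_G(Q)$. Finally, $c_Q^N$ is not a priori defined, since $Q\not\le N$.

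The missing idea is to apply the defect-zero hypothesis in a quotient; that is where the abelianness of $D$ is needed. Put $T=Q\cap N$. By Dedekind, $N_{QN}(Q)/Q\cong N_N(Q)/T$, and this is a normal subgroup of $N_G(Q)/Q$. A constituent $\bar\psi'$ of the restriction of $\bar\psi$ to it still has defect zero.

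The block of $N_N(Q)$ containing $\bar\psi'$ is covered by $B_Q$. So it has a defect group $\hat D$ that is conjugate to $N_D(Q)$, hence abelian, and that contains the normal $p$-subgroup $T$. Being abelian, $\hat D$ lies in $C_{N_N(Q)}(T)$, in which $T$ is central. It follows that the block of $N_N(Q)/T$ containing $\bar\psi'$ has defect group of order $|\hat D:T|$. Since $\bar\psi'$ has defect zero, $|N_D(Q)|=|T|$, so $N_D(Q)=T\le Q$. Then $N_{DQ}(Q)=Q$, and therefore $D\le Q$.

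This is the paper's argument, with $H=QN$. Your sketch spends the abelian hypothesis only on the step that is automatic. It uses the defect-zero hypothesis only to reach the incorrect defect-group claim.
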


\begin{proof}
Set \(H = QN\).
Take $\bar\chi\in {\rm dz}(N_G(Q)/Q, b)$ and an irreducible constituent $\bar \psi$ of the restriction of $\bar\chi$ to $N_H(Q)/Q$.
Regard $\bar\chi$ as a character of $N_G(Q)$ and $\bar\psi$ as a character of $N_H(Q)$.
Set \(b_Q = \operatorname{bl}(\bar\chi)\) and \(c_Q = \operatorname{bl}(\bar\psi)\). Then $b_Q$ covers $c_Q$.

There is some $x\in G$ such that $N_P(Q^x)$ is a defect group of the block $(b_{Q})^x$ of $N_G(Q^x)$.
If $D\leq Q^x$, then $D^{x^{-1}}\leq Q$. Since $c$ is $G$-invariant, $D^{x^{-1}}$ is a defect group of $c$. Moreover, $D^{x^{-1}}\leq Q\leq P$
and hence $D^{x^{-1}}\leq P\cap N=D$.
Equality of orders gives $D^{x^{-1}}=D$, so $D\leq Q$.
Consequently, after conjugating all the data if necessary, we may assume that
$R=N_P(Q)$
is a defect group of $b_Q$.
In this situation, $N_{QD}(Q)$ is a defect group of $c_Q$.

Set $\hat D=N_D(Q)$ and $T=Q\cap \hat D$.
Note that $T=Q\cap N_N(Q)$, that $c_Q$ is a block of $N_N(Q)${,} and that $\hat D$ is a defect group of {$c_Q$}. Set $\bar N_N(Q)=N_N(Q)/T$ and $\bar N_H(Q)=N_H(Q)/Q$. The inclusion {$N_N(Q)\leq N_H(Q)$} induces an injective homomorphism $\bar N_N(Q)\rightarrow \bar N_H(Q)$ with normal image. We identify $\bar N_N(Q)$ with a normal subgroup of $\bar N_H(Q)$.
Since $D$ is abelian, each block of $\bar N_N(Q)$ contained in the block $c_Q$ has defect group $\hat D/T$.

Assume that $D$ is not contained in $Q$. Then $Q$ is properly contained in $QD$, hence properly contained in $N_{QD}(Q)=Q\hat D$, so the quotient group $\hat D/T$ is nontrivial. But some block of $\bar N_N(Q)$ contained in the block $c_Q$ is covered by the block of $\bar N_H(Q)$ containing $\bar\psi$ while the latter has the trivial defect group. This yields a contradiction.
\end{proof}

\begin{lem}\label{Op-prime}
Let $G$ be a finite group and $b$ a block of $G$.
Let $N$ be a normal subgroup of $G$ and $c$ a $G$-invariant nilpotent block of $N$ covered by $b$.
Let $D$ be a defect group of $c$.
Then there exist a finite group $L$ containing $D$ as a normal subgroup,
a central $p'$-subgroup $Z$ of $L$, and a block $d$ of $L$ such that $b$ and $d$ are basically Morita equivalent and such that
$L/DZ \cong G/N$, $l(b)=l(d)$, and $\ell(b)=\ell(d)$.
Furthermore, in one of the following two cases, we have $l(b)=\ell(b)$.

\smallskip\noindent{\bf 5.2.1.}  The Sylow $2$-subgroups of $G/N$ are cyclic or {generalized quaternion}.

\smallskip\noindent{\bf 5.2.2.} {We have} $p=2$ and, for each odd prime $r$, the Sylow $r$-subgroups of $G/N$ are abelian.
\end{lem}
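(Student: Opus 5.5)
The plan is to reuse the construction of Lemma~\ref{Normal-one-orbit}, with $\tilde G=G$ so that the $p$-group $\tau(\tilde P)$ is trivial.

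\textbf{Step 1: construct $L$, $Z$ and $d$.} Since $c$ is a $G$-invariant nilpotent block of $N$, the structure theorem for extensions of nilpotent blocks \cite[Theorem~3.5 and Corollary~3.14]{PZ} gives an extension $1\to D\to\mathcal L\to G/N\to 1$ and a class $\alpha\in H^2(\mathcal L,k^*)$. It also gives a $G/N$-graded basic Morita equivalence $\mathcal OGc\simeq_{\mathrm{bM}}\mathcal O_\alpha\hat{\mathcal L}$. As in 2.7.5--2.7.7, by \cite[Proposition~1.2.18]{L} we replace the $k^*$-group by a finite central extension $L$ with central $p'$-subgroup $Z=k^*\cap L$, so that $\mathcal O_\alpha\hat{\mathcal L}\cong\mathcal OLe$ for a primitive idempotent $e\in\mathcal OZ$. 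Then $D\trianglelefteq L$ and $L/DZ\cong G/N$. We take $d$ to be the block of $L$ corresponding to $b$ under this equivalence, so $b\simeq_{\mathrm{bM}} d$.

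\textbf{Step 2: $l(b)=l(d)$ and $\ell(b)=\ell(d)$.} The equality $l(b)=l(d)$ holds because a Morita equivalence preserves the number of simple modules. For weights, a basic Morita equivalence induced by an endopermutation source bimodule preserves defect groups and the fusion system. By Puig's local theory, for each subgroup $Q$ of the defect group it induces basic Morita equivalences between the Brauer-correspondent blocks of $N_G(Q)$ and $N_L(Q)$, compatibly with the action of $N(Q,e_Q)/QC(Q)$ and the relevant twisted group algebras. Weights of $b$ are counted via defect-zero blocks of $k_\beta(N_G(Q,e_Q)/QC_G(Q))$ over $b$-Brauer pairs, and these twisted algebras depend only on the source algebra. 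So $\ell(b)=\ell(d)$. The main obstacle is checking that the cohomology classes match, not merely the quotient groups. For this I would use the known invariance of Alperin-weight counts under basic Morita equivalence (Puig's source-algebra description of weights) rather than redo it.

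\textbf{Step 3: the two cases.} Now $D\trianglelefteq L$ and $Z$ is a central $p'$-subgroup. Let $\bar d$ be the block of $L/Z$ lying under $d$, or work with $C_L(D)$. Blocks of $L$ have defect groups containing $D$, and by Lemma~\ref{Vertex}, if $D$ is abelian then weights lie above $D$. We then reduce to the group $L/D$, a central extension of $G/N$ by $Z$, via the usual Fong--K\"ulshammer reduction for a normal $p$-subgroup.

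The Alperin weight conjecture for $d$ then becomes a statement about $L/D$ with a central $p'$-extension of $G/N$. Alperin's conjecture is known in both cases:
\begin{itemize}
\item[5.2.1] For groups whose Sylow $2$-subgroups are cyclic or generalized quaternion, one uses the known verification for $2$-nilpotent-type situations (Brauer--Suzuki and solvable reduction); $p$ odd must be handled too, presumably via existing results cited in the paper such as \cite{HZ}.
\item[5.2.2] For $p=2$ with abelian odd Sylow subgroups, one uses the known verification of the Alperin weight conjecture for $2$-blocks when the odd Sylows are abelian.
\end{itemize}

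I expect this last step to depend on external verifications, and this is where I am least certain about the precise references.
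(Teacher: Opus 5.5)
\textbf{Steps 1 and 2} follow the paper. The paper gets $L$, $Z$, $d$ from the main theorem of \cite{KP} (equivalently \cite{PZ}) and $\ell(b)=\ell(d)$ from \cite[Corollary~1.3]{Hua}, which is the Morita-invariance of weight counts you describe.

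\textbf{Step 3 has a genuine gap.} The ``known verifications'' you invoke do not exist in the form you need.

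In case 5.2.1 the prime $p$ may be odd, and this is exactly how the lemma is used for $p$ odd in the proof of Theorem~\ref{Main2}. A condition on Sylow $2$-subgroups then says nothing directly about $p$-blocks. Neither $2$-nilpotence nor solvability is available: $\mathrm{SL}_2(q)$, $q$ odd, has generalized quaternion Sylow $2$-subgroups. Also, \cite{HZ} concerns inertial blocks and is irrelevant here. Likewise, there is no standing theorem giving the Alperin weight conjecture for $2$-blocks of groups whose odd Sylow subgroups are abelian.

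The missing idea is Sp\"ath's reduction theorem \cite[Theorem~A]{Sp}. It says the blockwise conjecture holds for every block of a finite group once the inductive condition \cite[Definition~4.1]{Sp} holds for every nonabelian simple group involved in it. Since $DZ$ is solvable and $L/DZ\cong G/N$, the nonabelian simple groups involved in $L$ are exactly those involved in $G/N$. The Sylow hypotheses serve only to identify these groups.

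Under 5.2.1:
\begin{itemize}
\item If the Sylow $2$-subgroups are cyclic, $G/N$ is solvable.
\item If they are generalized quaternion, every section has cyclic, generalized quaternion, dihedral or Klein four Sylow $2$-subgroups. Brauer--Suzuki \cite{BS} rules out the quaternion case for simple groups, and \cite{GW,Wal} then leave $\PSL_2(q)$ and $\mathfrak A_7$.
\end{itemize}

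Under 5.2.2, \cite{SW,SZ} leave alternating groups, $\PSL_2(q)$, groups of Lie type in characteristic $2$, and $J_1$, $M_{11}$, $M_{22}$, $M_{23}$.

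For all of these the inductive condition is known: by \cite{FLZ,M1} in the first case, and by \cite{Bre,FLZ,M1} and \cite[Theorem~C]{Sp} at $p=2$ in the second. So \cite[Theorem~A]{Sp} gives $l(d)=\ell(d)$, hence $l(b)=\ell(b)$.

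Your detour through $L/D$ is unnecessary and partly unjustified, for three reasons:
\begin{itemize}
\item Lemma~\ref{Vertex} needs $D$ abelian, which is not assumed here. Radical subgroups contain $\mathbf O_p(L)\geq D$ anyway, so you do not need it.
\item $d$ need not factor through $L/Z$, because it may lie over a faithful linear character of $Z$.
\item $L/D$ has the same nonabelian simple sections as $L$, so the passage gains nothing.
\end{itemize}
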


\begin{proof}
By the main theorem of \cite{KP} {(see also \cite{PZ})}, there exists a finite group $L$ containing $D$ as a normal subgroup, a central $p'$-subgroup $Z$ of $L$, and a block $d$ of $L$ such that $L/DZ \cong G/N$ and $b$ and $d$ are basically Morita equivalent. {Hence} $l(b)=l(d)$. By \cite[Corollary 1.3]{Hua}, we have $\ell(b)=\ell(d)$.

The nonabelian simple groups involved in $L$ are precisely those involved in $L/DZ =G/N$.
In order to prove $l(b)=\ell(b)$, by Sp\"ath's reduction theorem (see \cite{Sp}), it is therefore enough to verify the inductive blockwise Alperin weight condition stated in \cite[Definition 4.1]{Sp} for the nonabelian simple groups involved in $G/N$.

Suppose that Statement {\bf 5.2.1} holds.
If the Sylow \(2\)-subgroups of \(G/N\) are cyclic, then \(G/N\) is solvable, so there are no nonabelian composition factors to consider. Suppose that the Sylow $2$-subgroups of $G/N$ are {generalized quaternion}. Recall that a group $H$ is said to be involved in $G$ if there exist subgroups
$H_1 \unlhd H_2 \leq G$ such that $H_2/H_1 \cong H$. A Sylow \(2\)-subgroup of every section of \(G/N\) is involved in a generalized quaternion group and is therefore cyclic, a generalized quaternion group, a Klein four group, or a dihedral group.
By \cite[Theorem 1]{BS}, there is no finite nonabelian simple group with generalized quaternion Sylow $2$-subgroups.
Consequently, every finite nonabelian simple group involved in $G/N$ has Sylow $2$-subgroups that are either Klein four groups or dihedral groups.
According to \cite[Theorem 2]{GW} and \cite[Theorem 1]{Wal}, such finite nonabelian simple groups are precisely
the projective special linear groups $\operatorname{PSL}_2(q)$ or the alternating group $\mathfrak{A}_7$.
By \cite[Theorem 1]{FLZ} and \cite[Theorem 1.1]{M1}, the inductive blockwise Alperin weight condition holds for these simple groups.
Therefore \cite[Theorem A]{Sp} implies that the blockwise Alperin weight conjecture is true for $d$ and thus for $b$.

Suppose that Statement {\bf 5.2.2} holds. For each odd prime $r$,
the nonabelian simple groups involved in $G/N$ have abelian Sylow $r$-subgroups.
Let $S$ be a nonabelian simple group involved in $G/N$.
If $S$ is of Lie type with odd defining characteristic, then by \cite[Proposition 5.1]{SW} we have
$S\cong {\rm PSL}_2(q)$ with $r$ dividing $q$.
If $S$ is a sporadic group, \cite[Table 4]{SZ} shows that $S$ is one of $J_1$, $M_{11}$, $M_{22}$, or $M_{23}$.
Hence $S$ belongs to one of the following families: $\mathfrak{A}_n$ ($n\geq 5$),
{$\PSL_2(q)$}, groups of Lie type with defining characteristic $2$,
$J_1$, $M_{11}$, $M_{22}$, and $M_{23}$.
By \cite{Bre}, \cite[Theorem 1]{FLZ}, \cite[Theorem 1.1]{M1}, and \cite[Theorem C]{Sp},
the inductive blockwise Alperin weight condition of \cite[Definition 4.1]{Sp} holds for $S$ at the prime $2$.
Applying \cite[Theorem A]{Sp} yields $l(d)=\ell(d)$, and consequently $l(b)=\ell(b)$.
\end{proof}

\begin{proof}[\rm\bf Proof of Theorem \ref{Main1}] Let $b$ be a block of $G$ with exactly one irreducible Brauer character and with defect group $P$.
{Suppose, for a contradiction, that $\ell(b)\neq1$, and choose a counterexample \((G,b)\) such that}
$
\bigl(|G:\mathbf O_{p'}(\mathbf Z(G))|,\ |G|\bigr)
$ is {minimal} with respect to the lexicographic order.

Let $N$ be a normal subgroup of $G$, let $c$ be a block of $N$ covered by $b$, let $G_c$ be the stabilizer of $c$ under $G$-conjugation{,} and let $e$ be the Fong--Reynolds correspondent of $b$ in $G_c$. We claim that $G_c=G$. The block algebras $\mathcal{O} G b$ and $\mathcal{O} G_c e$ are basically Morita equivalent, so $l(b)=l(e)$ and $\ell(b)=\ell(e)$ (see \cite[Corollary 1.3]{Hua}). Suppose that $G_c<G$. Then $|G_c: \mathbf{O}_{p'}(\mathbf{Z}(G_c))|< |G: \mathbf{O}_{p'}(\mathbf{Z}(G))|$; otherwise, $G=G_c\mathbf{O}_{p'}(\mathbf{Z}(G))=G_c$. The {minimality of} $|G : \mathbf{O}_{p'}(\mathbf{Z}(G))|$ gives $\ell(e)=1$, hence $\ell(b)=1$.
This contradiction {proves the claim}.

Suppose that the block $c$ has trivial defect group. We claim $N\leq \mathbf{O}_{p'}(\mathbf{Z}(G))$. Then $\mathcal{O}Nc$ is a matrix algebra over $\mathcal O$.  Obviously there is a block algebra of $N\mathbf{O}_{p'}(\mathbf{Z}(G))$ covering $\mathcal{O}Nc$ and being isomorphic to $\mathcal{O} N c$.
By Lemma~\ref{Op-prime}, there exists a finite group $L$, a central $p'$-subgroup $Z$ of $L$ and a block $d$ of $L$ such that $b$ and $d$ are basically Morita equivalent and such that $L/Z \cong G/(N\mathbf{O}_{p'}(\mathbf{Z}(G)))$, $l(b)=l(d)$ and $\ell(b)=\ell(d)$. Clearly we have
$|L: \mathbf{O}_{p'}({\bf Z}(L))|\leq|G: N\mathbf{O}_{p'}(\mathbf{Z}(G))| \leq |G: \mathbf{O}_{p'}(\mathbf{Z}(G))|$.
If $|L: \mathbf{O}_{p'}({\bf Z}(L))|< |G: \mathbf{O}_{p'}(\mathbf{Z}(G))|$, then the {minimality of} $|G : \mathbf{O}_{p'}(\mathbf{Z}(G))|$ gives $\ell(d)=1$ and hence $\ell(b)=1$, {a contradiction}. Thus $|L: \mathbf{O}_{p'}({\bf Z}(L))|= |G: \mathbf{O}_{p'}(\mathbf{Z}(G))|$, $N\mathbf{O}_{p'}(\mathbf{Z}(G))=\mathbf{O}_{p'}(\mathbf{Z}(G))${,} and $N\leq \mathbf{O}_{p'}(\mathbf{Z}(G))$.
{This proves the claim.}

If \((Q, \psi)\) is a \(p\)-weight of the block $b$, then \(Q\) is radical. Set \(R=Q\mathbf O_p(G)\). Then \(N_R(Q)\) is a normal \(p\)-subgroup of \(N_G(Q)\), so
\(N_R(Q)\leq Q\). Since \(Q\leq N_R(Q)\), we have \(N_R(Q)=Q\). So $Q=R$ and
\(\mathbf O_p(G)\leq Q\). Set $\bar G=G/\mathbf O_p(G)$ and denote by $\bar T$ the image of a subgroup $T$ in $\bar G$. For any subgroup $T$ of $G$ containing $\mathbf O_p(G)$, the natural homomorphism $G\rightarrow \bar G$ induces {the} group isomorphism $$N_G(T)/T\cong N_{\bar G}(\bar T)/\bar T. $$
{Consequently},
the natural homomorphism $G\rightarrow \bar G$ induces a bijective correspondence $T\mapsto \bar T$ between radical subgroups of $G$ and $\bar G$.

Since $b$ has exactly one irreducible Brauer character, there is a unique block $\bar b$ of $\bar G$ dominated by $b$, and $\bar b$ also has exactly one irreducible Brauer character. Let $(Q, \psi)$ be a $p$-weight of $G$. By the preceding paragraph, $\mathbf O_p(G)\leq Q$. Through the isomorphism above, $\psi$ may be regarded as a character of $N_{\bar G}(\bar Q)/\bar Q$. Then $(\bar Q, \psi)$ is a $p$-weight of $\bar G$, and $(Q, \psi)\mapsto (\bar Q, \psi)$ defines a bijection between the $p$-weights of $G$ and those of $\bar G$; this bijection also identifies their respective conjugacy classes. Since $b$ has exactly one {irreducible Brauer character}, \cite[Lemma 2.4(b)]{Sp} shows that
$(Q, \psi)$ is a $p$-weight of $b$ if and only if $(\bar Q, \psi)$ is a $p$-weight of $\bar b$. In conclusion, we have $\ell(b)=\ell(\bar b)$.

We claim that ${\bf O}_p(G)$ is trivial.
We have $|\bar G : \mathbf{O}_{p'}(\mathbf{Z}(\bar G))|\leq |G : \mathbf{O}_{p'}(\mathbf{Z}(G))|$. If this inequality is strict, the {minimality of} $|G : \mathbf{O}_{p'}(\mathbf{Z}(G))|$ gives $\ell(\bar b)=1$ and hence $\ell(b)=1$, a contradiction. Therefore we have $|\bar G : \mathbf{O}_{p'}(\mathbf{Z}(\bar G))|= |G : \mathbf{O}_{p'}(\mathbf{Z}(G))|$, which forces $\mathbf O_p(G)=1$. This proves the claim.

We claim \(\mathbf E(G)\neq1\). Otherwise,
$\mathbf F^*(G)=\mathbf Z(G)$ and the self-centralizing property of \(\mathbf F^*(G)\) would give \(G=\mathbf Z(G)\), which is not a counterexample.

Let $X$ be a component of $G$.
Let $\{X_1, \dots, X_r\}$ be the $G$-orbit of $X$ under the conjugation action on the set of components such that $X_1 = X$. Set $L = X_1 X_2 \cdots X_r$,
let $b_L$ be a block of $L$ covered by $b$ and let $f$ be a block of $X$ such that $f b_L\neq 0$.
By the second paragraph of the proof, $b_L$ is $G$-stable and hence is unique. Since $X_1, X_2, \ldots, X_r$ {commute with one another}, $f$ is also unique. Moreover,
$P_L=P \cap L$ is a defect group of $b_L$ and $P_X=P \cap X$ is a defect group of $f$.

For each $i$, choose
$\alpha_i\in{\rm Inn}(G)$ such that
$\alpha_1=1$ and $X_i=\alpha_i(X)$.
Denote by $X^r$ the direct product of $r$ copies of $X$ and
define $$\pi:X^r\rightarrow L, (x_1,\ldots,x_r)\mapsto \alpha_1(x_1)\cdots\alpha_r(x_r).$$
Since distinct components commute and intersect centrally,
$\pi$ is surjective and $\ker(\pi)\leq Z(X^r)$.
As in the proof of Lemma \ref{Normal-one-orbit2}, we identify $X_i$ with $X$ and $\mathcal{O}X^r$ with the tensor product
$\mathcal{O}X\bigotimes_{\mathcal{O}}\mathcal{O}X\bigotimes_{\mathcal{O}}\cdots\bigotimes_{\mathcal{O}}\mathcal{O}X $ and then prove that {$f^{\otimes r}=f\otimes\cdots\otimes f$} is the block of $X^r$ dominating $b_L$.
Since $\mathbf{O}_p(G)=1$, we have $\mathbf{O}_p(X)=1$ and hence $\mathbf Z(X^r)$ is a $p'$-group. Consequently, {$\pi(f^{\otimes r})=b_L$} since $\ker(\pi)\leq \mathbf Z(X^r)$. Clearly $P_X^r$ is a defect group of $f^{\otimes r}$. Therefore $\pi(P_X^r)$ is a defect group of $b_L$ isomorphic to $P_X^r$. Since $P_L$ is also a defect group of $b_L$, the subgroups $\pi(P_X^r)$ and $P_L$ are conjugate in $L$. Composing $\pi$, if necessary, with the corresponding inner automorphism of $L$ does not change $\ker(\pi)$ or either of the blocks $f^{\otimes r}$ and $b_L$. We may therefore choose the identifications above so that
\(
 \pi(P_X^r)=P_L
\).

By the third paragraph of the proof, $\pi(P_X^r)$ is nontrivial{; hence} $P_X$ is nontrivial.
Indeed, if $P_L=1$, then $b_L$ has defect zero, and the claim
proved in the third paragraph, applied to $L\trianglelefteq G$, gives
$L\leq\mathbf O_{p'}(\mathbf Z(G))$. This is impossible because $L$
contains the noncentral component $X$.
Since $l(b)=1$, Lemma \ref{Normal-one-orbit2} implies that the set $\IBr(f)$  is a single \(\Aut(X)_{f}\)-orbit.
Then, by Proposition~\ref{Pro:Quai-simple} and \ref{Quasi-one}, the block $f$ is inertial and $P_X$ is abelian. Consequently for any $Q\leq P_X$, $\operatorname{dz}(N_X(Q)/Q,\, f)$ is not empty if and only if $Q=P_X$.

Let $f_0$ be the Brauer correspondent of $f$ in $N_X(P_X)$.
For any $\chi \in \operatorname{dz}(N_X(P_X)/P_X,\, f)$, the inflation of $\chi$ to $N_X(P_X)$ belongs to $f_0$
and the restriction $\chi^0$ of $\chi$ to the $p'$-elements of $N_X(P_X)$ belongs to $\IBr(f_0)$; moreover, the correspondence $\chi\mapsto \chi^0$ is a natural bijection between $\operatorname{dz}(N_X(P_X)/P_X,\, f)$ and $\IBr(f_0)$.

The kernel $\operatorname{Ker}(b)$ of the block $b$ is a normal $p'$-group and by the proof above, we may assume that $\operatorname{Ker}(b)$ is central in $G$. The quotient of $G$ by $\operatorname{Ker}(b)$ preserves irreducible Brauer characters and $p$-weights. By induction on $|G|$, we may assume that $\operatorname{Ker}(b)$ is trivial. Then it follows from \cite[Theorem~6.10]{N1} and Clifford theory of characters that $\operatorname{Ker}(f)$ is equal to $1$. The block $f$ cannot be principal; otherwise, since $\IBr(f)$ is a single \(\Aut(X)_{f}\)-orbit, it consists only of the trivial character, and $X$ would be $p$-nilpotent, a contradiction.
Thus for any $\phi\in \IBr(f)$, $\operatorname{Ker}(\phi)$ is a proper normal subgroup of $X$ and is therefore contained in $\mathbf Z(X)$. Since $\mathbf{O}_p(X)=1$, all $\operatorname{Ker}(\phi)$ for $\phi\in \IBr(f)$ are $p'$-groups. Moreover, all $\phi$ cover the same linear character $\lambda$ of $\mathbf Z(X)$ and hence all $\operatorname{Ker}(\phi)$ equal $\operatorname{Ker}(\lambda)$. Therefore, $\operatorname{Ker}(\phi)=\operatorname{Ker}(f)=1$ for every $\phi\in \IBr(f)$.

By \cite[Theorem~1.1]{HZ}, the inductive blockwise Alperin weight
condition of \cite[Definition~3.2]{KS} holds for $f$. Thus, if
$\Aut(X)_{P_X,f}$ denotes the subgroup stabilizing
$P_X$ and $f$, there is an
$\Aut(X)_{P_X,f}$-equivariant bijection
\[
 \Omega_X\colon\IBr(f)\longrightarrow
 \IBr(f_0).
\]
Moreover, for any $\phi\in \IBr(f)$, there exists a group $A = A(\phi)$ such that
$X \trianglelefteq A$, $A/C_A(X) \cong \Aut(X)_{\phi}$, $C_A(X)={\bf Z}(A)$,
$p \nmid |{\bf Z}(A)|$ and there exists a block isomorphism of modular character triples (see \cite[Definition 3.4]{MRR})
$$
(A, X, \phi) \succeq_b \bigl(N_A(P_X), N_X(P_X), \Omega_X(\phi)\bigr).
$$
Then by \cite[Lemma 3.10]{MRR}, we have
\[
(A\wr \mathfrak{S}_r, X^r, {\phi^{\otimes r}}) \succeq_b \bigl(N_A(P_X)\wr \mathfrak{S}_r, N_X(P_X)^r,
 {\Omega_X(\phi)^{\otimes r}}\bigr).
\]

\bigskip\noindent
\textbf{Step 1. Compatibility with the central quotient.}

We first claim
\[
  \operatorname{Ker}(\phi^{\otimes r})\leq {\bf Z}(X^r).
\]
Set \(K=\operatorname{Ker}(\phi^{\otimes r})\).  Then
\(K\trianglelefteq X^r\) and \(K\) is invariant under the natural
action of \(\mathfrak S_r\).
Suppose that \(K\nleq {\bf Z}(X^r)\).  Then, for some \(i\), the image
\(\operatorname{pr}_i(K)\) is a non-central normal subgroup of \(X\), where $\operatorname{pr}_i(K)$ denotes the $i$-th projection $X^r\rightarrow X$.
Since \(X\) is quasisimple, \(\operatorname{pr}_i(K)=X\).  By
\(\mathfrak S_r\)-invariance, \(\operatorname{pr}_j(K)=X\) for every
\(j\).  If \(X_j\) denotes the \(j\)-th direct factor, then
\[
  [K,X_j]\leq K\cap X_j
  \,\,\text{and}\,\,
  \operatorname{pr}_j([K,X_j])
     =[\,\operatorname{pr}_j(K),X\,]
     =[X,X]=X.
\]
Thus \(X_j\leq K\) for every \(j\), and hence \(K=X^r\).  This would
make \(\phi^{\otimes r}\) and therefore \(\phi\) trivial, contrary
to the choice of \(\phi\).  Hence
\(K\leq {\bf Z}(X^r)\), as claimed.

Since $\mathbf Z(X^r)$ is a $p'$-subgroup of $X^r$, it follows from \cite[Theorem 6.10]{N1} that we have
${\operatorname{Ker}(\phi^{\otimes r})=\operatorname{Ker}(f^{\otimes r})}$. Furthermore
\cite[Lemma 5.6.4]{F} yields
${\operatorname{Ker}(f^{\otimes r}) \leq \operatorname{Ker}(f_0^{\otimes r})}$. Hence we have
\[\operatorname{Ker}(\phi^{\otimes r}) \leq{\operatorname{Ker}(f_0^{\otimes r}) \leq \operatorname{Ker}(\Omega_X(\phi)^{\otimes r})}.
\]
Set ${Z_0 = \operatorname{Ker}(\phi^{\otimes r})}$.
For every $J \leq A \wr \mathfrak{S}_r$, set $\bar{J} = JZ_0/Z_0$ and regard $\chi \in \IBr(JZ_0)$ as a character of $\overline{J}$ if $Z_0 \leq \operatorname{Ker}(\chi)$, denoted by $\overline{\chi}$.

We are going to verify that
the hypotheses of
\cite[Lemma 3.15]{MRR} are satisfied for the triple $A\wr \mathfrak{S}_r$, $X^r$ and $Z_0$. For simplicity, set
\begin{center}$W=A\wr \mathfrak{S}_r$, $Y=X^r$, $W_0=N_A(P_X)\wr\mathfrak S_r$, $Y_0=N_X(P_X)^r$, $\theta=\phi^{\otimes r}$ and
 $\theta_0=\Omega_X(\phi)^{\otimes r}$.\end{center}
The character \(\theta\) is \(W\)-invariant, so
\(Z_0=\ker(\theta)\) is normal in \(W\).  Moreover,
\(Z_0\leq {\bf  Z}(Y)\leq Y_0\), \(p\nmid |Z_0|\) and
\(
 Z_0\leq\ker(\theta)\cap\ker(\theta_0).
\)
Thus both \(\bar\theta\) and \(\overline{\theta_0}\) are well defined by
deflation.

Suppose \(wZ_0\in C_{W/Z_0}(Y/Z_0)\). Then
\([w,Y]\leq Z_0\). Since \(Z_0\leq {\bf Z}(Y)\), the map
$
\delta_w:Y\rightarrow Z_0,
y\mapsto y^{-1}y^w
$
is a group homomorphism. Since \(Y=X^r\) is perfect and \(Z_0\)
is abelian, every homomorphism from \(Y\) to \(Z_0\) is trivial.
Thus \(\delta_w=1\) and hence \(w\in C_W(Y)\). Consequently, we have
\(
C_{W/Z_0}(Y/Z_0)=C_W(Y)/Z_0.
\)
We have now verified all the additional hypotheses of
\cite[Lemma~3.15]{MRR}: the subgroup \(Z_0\) is a central normal \(p'\)-subgroup in \(Y\) contained in both kernels of $\theta$ and $\theta_0$, and the required
centralizer equality is the one just proved.  Applying that lemma to the
wreath-product block isomorphism above therefore gives
 \[
 (\overline{W}, \overline{Y},
 \overline{\theta}) \succeq_b  \bigl(\overline{W_0},
 \overline{Y_0},
 \overline{\theta_0}\bigr).
 \]
Thus the wreath-product block isomorphism is compatible with deflation through the same central $p'$-subgroup $Z_0$ on both sides; this is the central-quotient compatibility needed in the next step.

\medskip\noindent
\textbf{Step 2. Descent to $L$ and automorphism equivariance.}

Since $\operatorname{Ker}(b)=1$, we have $\operatorname{Ker}(b_L) = 1$ and hence $Z_0={\operatorname{Ker}(f^{\otimes r}) = \operatorname{Ker}(\pi)}$.
So we may identify
$\overline{X^r}$ with $L$ via $\pi$. With this identification, we have
$\operatorname{bl}(\overline\theta)=b_L$. Since \(Z_0\) is a central \(p'\)-subgroup, \(P_X^r\) is the unique
Sylow \(p\)-subgroup of \(P_X^rZ_0\). Therefore we have
\[
\begin{aligned}
 N_L(P_L)
 &=N_{X^r/Z_0}(P_X^rZ_0/Z_0)\\
 &=N_{X^r}(P_X^rZ_0)/Z_0\\
 &=N_{X^r}(P_X^r)/Z_0
  =N_X(P_X)^r/Z_0
  =\overline{N_X(P_X)^r}.
\end{aligned}
\]
The same calculation in $\overline W=W/Z_0$ gives
\(
 \overline{W_0}=N_{\overline W}(P_L).
\)
Consequently, if $J$ is any subgroup of $\overline W$ containing $L$,
then
\(
 J\cap\overline{W_0}=N_J(P_L).
\)
Thus the subgroup on the right-hand side of the restricted triple
isomorphism below is exactly the normalizer required in
\cite[Lemma~3.6(i)]{MRR}.

Let $G_{\bar\theta}$ be the stabilizer of $\bar\theta$ in $G$ and let
\[
 \epsilon_{G_{\bar\theta}}\colon G_{\bar\theta}\rightarrow
 \Aut(L)
 \quad\text{and}\quad
 \epsilon_{\overline W}\colon\overline W\rightarrow
 \Aut(L)
\]
be the homomorphisms induced by conjugation. We are going to prove
\begingroup
\makeatletter
\let\veqno\@@leqno
\makeatother
\begin{equation}\label{eq:automorphism-image-inclusion}
 \epsilon_{G_{\bar\theta}}(G_{\bar\theta})
 \leq \epsilon_{\overline W}(\overline W).
\end{equation}
\endgroup

Fix $g\in G_{\bar\theta}$. Since $L=X_1\cdots X_r$, conjugation by $g$ permutes
these components. Thus there is a permutation
$\sigma\in\mathfrak S_r$ such that $X_i^g=X_{\sigma(i)}$ for every
$i$. For each $i$, define
\[
 \beta_i=\alpha_{\sigma(i)}^{-1}\circ c_g\circ\alpha_i
 \bigm|_X\in\Aut(X),
\]
where $c_g$ denotes conjugation by $g$. These maps define an
automorphism $\hat g$ of $X^r$ which sends the $i$-th factor to
the $\sigma(i)$-th factor via $\beta_i$ and satisfies
\begingroup
\makeatletter
\let\veqno\@@leqno
\makeatother
\begin{equation}\label{eq:lift-conjugation}
 \pi\circ\hat g=c_g\circ\pi.
\end{equation}
\endgroup

Recall that $\theta=\phi^{\otimes r}$ and that
$\theta=\bar\theta\circ\pi$ on the $p$-regular elements of $X^r$.
Since $g$ stabilizes $\bar\theta$, equation
\eqref{eq:lift-conjugation} implies that $\hat g$ stabilizes
$\theta$. Let $x$ be a $p$-regular element of $X$ and place $x$ in
the $i$-th factor of $X^r$ with the identity in all other factors.
The $\hat g$-invariance of $\theta$ then gives
\[
 \phi(x)\phi(1)^{r-1}
 =\phi(\beta_i(x))\phi(1)^{r-1}.
\]
Consequently, we have $\phi^{\beta_i}=\phi$ and hence
$\beta_i\in\Aut(X)_\phi$ for every $i$.

By the choice of $A=A(\phi)$, the conjugation homomorphism
$A\to\Aut(X)_\phi$ is surjective. Choose $a_i\in A$
inducing $\beta_i$. With the convention that
$(a_1,\ldots,a_r)\sigma\in A\wr\mathfrak S_r$ sends the $i$-th
factor to the $\sigma(i)$-th factor via $a_i$, set
\[
 w=(a_1,\ldots,a_r)\sigma\in W.
\]
The automorphism of $X^r$ induced by $w$ is precisely $\hat g$.
Since $Z_0=\operatorname{Ker}(\pi)$, the image $\bar w$ of $w$ in
$\overline W=W/Z_0$ induces the same
automorphism on $L\cong X^r/Z_0$ as $g$ does. More explicitly, for every $\mathbf x\in X^r$, we have
\[
 \epsilon_{\overline W}(\bar w)(\pi(\mathbf x))
 =\pi(\mathbf x^{\hat g})
 =\pi(\mathbf x)^g
 =\epsilon_{G_{\bar\theta}}(g)(\pi(\mathbf x))
\]
and hence
$\epsilon_{G_{\bar\theta}}(g)=\epsilon_{\overline W}(\bar w)$.
This proves
\eqref{eq:automorphism-image-inclusion}.

In particular, every automorphism of $L$ induced by an element of
$G_{\bar\theta}$ is induced by an element of $\overline W$. Hence the restriction of the quotient triple isomorphism to
$\tilde G_{\bar\theta}= \epsilon_{\overline{W}}^{-1} \bigl( \epsilon_{G_{\bar\theta}}(G_{\bar\theta}) \bigr)$, followed by the butterfly argument
\cite[Lemma~3.11]{MRR}, is compatible with the conjugation actions on $L$ and
on $N_L(P_L)$. This is the automorphism equivariance required for the descent from $X^r$ to $L$.
It follows
from \cite[Lemma~3.6(i)]{MRR} that we have
\[
({\tilde{G}_{\bar\theta}}, L, {\bar\theta}) \succeq_b
\bigl( {N_{\tilde{G}_{\bar\theta}}(P_L)}, N_L(P_L), {\overline{\theta_0}} \bigr).
\]
Applying \cite[Lemma~3.11]{MRR}, we obtain
\[
({G_{\bar\theta}}, L, {\bar\theta}) \succeq_b
\bigl( {N_{G}(P_L)_{\bar\theta_0}}, N_L(P_L), {\bar\theta_0} \bigr).
\]

\bigskip\noindent
\textbf{Step 3. Passage from \(b\) to \(b_0\) and comparison of
weight sets.}

Set \(M=N_G(P_L)\) and \(d_L=\bl(\overline{\theta_0})\). Note that $d_L$ is the Brauer correspondent of $b_L$ in $N_L(P_L)$. Since
\(\IBr(b_L)\) is a single \(G\)-orbit and then a single \(\Aut(L)_{b_L}\)-orbit, \cref{Main} and Proposition \ref{Quasi-one} show that the block $b_L$ is inertial with abelian defect groups. Then it follows from \cite[Theorem]{Zhou1} that
\(\IBr(d_L)\) is a single \(M\)-orbit.  Let \(b_0\) be the Brauer
correspondent of \(b\) in \(M\) covering \(d_L\) (see \cite[Theorem~9.28]{N1}).  Since \(P\) normalizes
\(P_L=P\cap L\), we have \(P\leq M\) and \(P\) is also a defect group of
\(b_0\).  Notice also that \(N_L(P_L)\trianglelefteq M\), that \(d_L\) is
\(M\)-invariant and that \(P_L\) is a defect group of \(d_L\).

The block isomorphism of modular character triples obtained in Step~2 induces
a bijection
\[
 \sigma_{\bar\theta}\colon
 \IBr(G_{\bar\theta}\mid\bar\theta)
 \rightarrow
 \IBr(M_{\overline{\theta_0}}\mid\overline{\theta_0})
\]
such that, if
\(\eta\in\IBr(G_{\bar\theta}\mid\bar\theta)\), then we have
\(
 \bl(\eta)^G=b\) if and only if
 \(\bl\bigl(\sigma_{\bar\theta}(\eta)\bigr)^M=b_0.
 \)
Set
\begin{center}\(
 \mathcal X_b=
 \{\eta\in\IBr(G_{\bar\theta}\mid\bar\theta)\mid
       \bl(\eta)^G=b\}
\)
and
\(
 \mathcal X_{b_0}=
 \{\eta_0\in\IBr(M_{\overline{\theta_0}}\mid\overline{\theta_0})
       \mid\bl(\eta_0)^M=b_0\}.
\)\end{center}
The block-preserving property in the definition of $\succeq_b$,
together with transitivity of block induction and the uniqueness of the
block $b_0$ of $M$ covering $d_L$, gives
\[
 \sigma_{\bar\theta}(\mathcal X_b)=\mathcal X_{b_0}.
\]

Since every
$\varphi\in\IBr(b)$ has all its $L$-constituents in
$\IBr(b_L)$ and $\IBr(b_L)$ is a single $G$-orbit, $\bar\theta$ is a
constituent of $\varphi_L$.  Clifford correspondence therefore gives
a unique $\eta\in\IBr(G_{\bar\theta}\mid\bar\theta)$ such that
$\eta^G=\varphi$.  Moreover,
$\bl(\eta)^G=\bl(\eta^G)=b$, so that $\eta\in\mathcal X_b$.
Conversely, for $\eta\in\mathcal X_b$, we have
$\bl(\eta^G)=\bl(\eta)^G=b$.  Hence Clifford correspondence restricts
to a bijection
\[
 \mathcal X_b\longrightarrow\IBr(b), \eta\longmapsto\eta^G.
\]
Similarly, since $\IBr(d_L)$ is a single $M$-orbit, the map
\[
 \mathcal X_{b_0}\rightarrow\IBr(b_0),\,
 \eta_0\mapsto\eta_0^M
\] is bijective.
Consequently, the map
\(
 \IBr(b)\rightarrow\IBr(b_0),
 \eta^G\mapsto\sigma_{\bar\theta}(\eta)^M
\)
is bijective. In particular, we have $l(b_0)=l(b)=1$.

Since \(P_L\neq1\) and \(\mathbf O_p(G)=1\), we have \(M<G\).  Moreover,
\(\mathbf Z(G)=\mathbf O_{p'}(\mathbf Z(G))\leq
\mathbf O_{p'}(\mathbf Z(M))\), and hence
\(
 |M:\mathbf O_{p'}(\mathbf Z(M))|
 <|G:\mathbf O_{p'}(\mathbf Z(G))|.
\)
Then it follows from the minimal choice of \((G,b)\) that
\(\ell(b_0)\) is equal to 1.

\bigskip Every \(b\)-weight is conjugate to one whose first component \(Q\) is contained in \(P\). Let $Q$ be a $p$-subgroup of $P$ such that $\operatorname{dz}(N_G(Q)/Q,b)$ is nonempty. Applying Lemma~\ref{Vertex} to
\(L\trianglelefteq G\) shows $P_L\leq Q$, hence we have $Q\cap L=P_L$.
Since \(L\trianglelefteq G\), \(N_G(Q)\) normalizes
\(Q\cap L=P_L\). Consequently, we have
\(
 N_G(Q)\leq M\) and \(
 N_G(Q)=N_M(Q).
\)
By transitivity of block induction and the uniqueness of \(b_0\),
for every defect-zero character \(\chi\) of $N_G(Q)/Q$, we have
\(
 \bl(\chi)^G=b\) if and only if
 \(\bl(\chi)^M=b_0.\)
Therefore we have
\[
 \operatorname{dz}(N_G(Q)/Q,b)
 =\operatorname{dz}(N_M(Q)/Q,b_0).
\]
Since $\ell(b_0)=1$, we have $Q=P$ and any $p$-weights
$(Q, \chi)$ and $(Q, \psi)$ of $b$ with
$\chi, \psi\in \operatorname{dz}(N_G(Q)/Q,b)$ have to be $M$- and hence
$G$-conjugate to each other. Therefore we have \(\ell(b)=1\).
This is contrary to the choice of \((G,b)\).
\end{proof}

We isolate the induction step from the proof of
\cite[Proposition~5]{Wa}. This makes explicit the part of
Watanabe's argument that will be used below.

\begin{lem}\label{Watanabe-step}
Let $b$ be a block of a finite group $H$, fix a maximal $b$-Brauer
pair $(S,f_S)$, and, for every $R\leq S$, let $(R,f_R)$ be the unique
$b$-Brauer pair contained in $(S,f_S)$. Set
\[
 H_R=N_H(R,f_R),\qquad C_R=C_H(R),\qquad {b_R=(f_R)^{H_R}}.
\]
Assume that $f_R$ is nilpotent for every $1\neq R\leq S$ and $l(b)=1$.

\smallskip\noindent{\bf 5.3.1.} Then
$N_H(S,f_S)/C_H(S)$
is a $p$-group.

\smallskip\noindent{\bf 5.3.2.} Let $1<R<S$ and assume that
$N_H(T,f_T)/C_H(T)$
is a $p$-group for every $b$-Brauer pair $(T,f_T)$ with
$|T|>|R|$. Then
$H_R/C_R$ satisfies the following alternative:
\begin{itemize}
\item If $p$ is odd, the Sylow $2$-subgroups of $H_R/C_R$ are cyclic
or generalized quaternion;
\item If $p=2$, the Sylow $r$-subgroups of $H_R/C_R$ are cyclic for
every odd prime $r$.
\end{itemize}
In addition, if the blockwise Alperin weight conjecture holds for
$b_R$, then $H_R/C_R$ is a $p$-group.
\end{lem}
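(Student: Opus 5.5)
We treat the two parts in turn; both rest on the fusion of $b$ being controlled by the groups $N_H(R,f_R)/C_H(R)$ together with the hypothesis $l(b)=1$. Throughout, $b_R=(f_R)^{H_R}$ is the Fong--Reynolds-type induction of $f_R$. Since $H_R/C_R\cdot R$ acts on the nilpotent block $f_R$, the block $b_R$ of $H_R$ covers the $H_R$-invariant nilpotent block $f_R$ of $RC_H(R)$. The extension theory of nilpotent blocks (\cite{KP}, \cite{PZ}, as used in Lemma~\ref{Op-prime}) then gives a basic Morita equivalence between $b_R$ and a twisted group algebra of a group $L_R$. This group satisfies $L_R/RZ\cong H_R/RC_R$, with $Z$ a central $p'$-group. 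In particular $l(b_R)$ equals the number of irreducible projective (twisted) representations of $H_R/RC_R$ of the relevant cocycle class, in the $p'$-part.

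\textbf{5.3.1.} We take $R=S$. Here $b_S$ is the Brauer correspondent of $b$ in $N_H(S,f_S)$, and $S$ is a defect group. The first step is to show that $l(b)=1$ forces $l$ of the Brauer-correspondent-type block to be small: by Brauer's second main theorem and the standard counting $k(b)=\sum_{(u,f_u)} l(f_u)$ over representatives of conjugacy classes of $b$-subsections, the subsections with $u\neq1$ contribute $l(f_u)=1$ each, since $f_u$ is nilpotent. Comparing this with $k(b_S)$ via the same formula for $b_S$, whose nontrivial subsections also have nilpotent blocks, and using that $H$-fusion and $N_H(S,f_S)$-fusion of subsections into $S$ coincide when all $f_R$ are nilpotent (Alperin fusion reduces to the automizers), we get $k(b)-l(b)=k(b_S)-l(b_S)$. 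Watanabe's argument proceeds this way. Then we must show $l(b_S)=1$. For $b_S$ with normal defect group, $l(b_S)$ counts twisted irreducible characters of $E=N_H(S,f_S)/SC_H(S)$, which is a $p'$-group. The plan is to derive $l(b_S)=1$ from the equality together with $l(b)=1$ by a perfect-isometry-free degree count. Failing that, we use the inductive assumption on the automizers in the style of \cite[Proposition~5]{Wa}. Then $l(b_S)=1$ says $E$ has a unique irreducible projective representation for the cocycle, which forces $E=1$ since $E$ acts faithfully on $S$ and fixes $f_S$. Hence $N_H(S,f_S)/C_H(S)\cong N/SC\cdot SC/C$ is a $p$-group.

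\textbf{5.3.2.} By the inductive hypothesis, every automizer of a larger pair is a $p$-group. So in the decomposition of $k(b)$, only the subsections at level $R$ can differ from those of a "$p$-group fusion" model. Repeating the counting argument, localized at the $H$-class of $(R,f_R)$, yields $l(b_R)=1$. Via the Morita reduction above, the $p'$-part $E_R=H_R/RC_R$ then has a twisted group algebra with a single simple module, so it is a group of central type with respect to the cocycle. A Sylow $r$-subgroup of $E_R$ must therefore admit a cocycle making its twisted algebra have one simple module compatible with the faithful action on $R$. The restrictions stated come from $E_R$ acting on the abelian section $R/\Phi(R)$ (or through $O_p$): Watanabe shows that such Sylow subgroups have a unique subgroup of order $r$, using $H^2$ considerations. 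For $p$ odd this gives cyclic or generalized quaternion Sylow $2$-subgroups, and for $p=2$ it gives cyclic Sylow subgroups for odd $r$ (no quaternion possibility). Finally, if the Alperin weight conjecture holds for $b_R$, then $\ell(b_R)=l(b_R)=1$. A weight with radical subgroup $R$ exists only if $E_R$ has a twisted defect-zero character; the weight count is the number of $p$-weights with first component $R$ plus higher ones, which are absent since the higher automizers are $p$-groups. Having $R<S$ forces the unique weight to sit at $S$, so no twisted projective simple of $E_R$ can exist at $R$. That happens only if $O_p(H_R/C_R)\neq R C_R/C_R$ trivially, that is, $E_R=1$. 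Hence $H_R/C_R$ is a $p$-group.

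The main obstacle is the counting step establishing $l(b_R)=1$ from $l(b)=1$, which needs careful bookkeeping of subsection classes.
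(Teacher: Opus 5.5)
Your proposal has a genuine gap: the central step, $l(b_S)=1$ and later $l(b_R)=1$, is never established, and the route you sketch for it cannot work. Your subsection count assumes that $H$-fusion and $N_H(S,f_S)$-fusion agree once every $f_R$ with $R\neq1$ is nilpotent. That is false. Take the principal $2$-block of $\PSL_2(7)$: every $C_G(Q)$ with $Q\neq1$ is a $2$-group, so all $f_Q$ are nilpotent. Yet $N_G(S)=S\cong D_8$, all involutions of $G$ are conjugate, and $k(b)-l(b)=5-3=2\neq4=k(b_S)-l(b_S)$. Alperin's fusion theorem involves the automizers of all essential subgroups, and showing these are $p$-groups is exactly what the lemma is for. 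Even granted, an identity $k(b)-l(b)=k(b_S)-l(b_S)$ cannot determine $l(b_S)$ without $k(b)=k(b_S)$, which you do not have.

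The missing input is Theorem~\ref{Main1}. From $l(b)=1$ it gives $\ell(b)=1$. The $H$-classes of $b$-weights with first component $S$ are counted by $l(b_S)$, and $b_S$ contributes at least one. Hence $l(b_S)=1$, and for $1<R<S$ the block $b$ has no weight with first component $R$. Your unconditional claim $l(b_R)=1$ in 5.3.2 is likewise unsupported; if it held, the hypothesis that the weight conjecture holds for $b_R$ would be superfluous. In the paper, $l(b_R)=1$ comes from three things: that hypothesis, the absence of $b$-weights at $R$, and Watanabe's vertex count (his Lemma~4(ii)), which uses that for $R<T\le D$ the pairs $(T,f_T)$ have $p$-group automizers and nilpotent $f_T$. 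Your final paragraph also misstates this count: $b_R$ always has a weight at its defect group $D$.

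You also miss the mechanism that kills the twisting cocycle, and with it the source of the Sylow alternative. Faithfulness alone does not turn $l(b_S)=1$ into $E=1$: groups of central type, such as $C_r\times C_r$ with a nondegenerate cocycle, have twisted group algebras with a single simple module. What is needed is the following. Nilpotency of $f_{\langle\pi\rangle}$ for $1\neq\pi\in\mathbf Z(S)$ gives $N_H(S,f_S)\cap C_H(\pi)=SC_H(S)$. So $E$ acts fixed-point-freely on $\mathbf Z(S)\setminus\{1\}$. Its Sylow subgroups are therefore cyclic or generalized quaternion, its Schur multiplier vanishes, and $l(b_S)=k(E)$. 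Then $l(b_S)=1$ forces $E=1$.

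The same argument gives the Sylow alternative in 5.3.2. A Sylow $q$-subgroup $K$ ($q\neq p$) of $H_R/C_R$ satisfies $C_K(\pi)=1$ for $1\neq\pi\in\mathbf Z(R)$. This yields the alternative unconditionally, not from $l(b_R)=1$ via central type and not from an action on $R/\Phi(R)$. It also shows that $l(b_R)$ is the number of $p$-regular classes of $H_R/C_R$, which is how $l(b_R)=1$ becomes ``$H_R/C_R$ is a $p$-group''.

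The order of these steps matters. In the proof of Theorem~\ref{Main2}, the alternative is what lets Lemma~\ref{Op-prime} supply the weight conjecture for $b_R$, and that is then used to get $l(b_R)=1$. Deriving the alternative from $l(b_R)=1$ would be circular.
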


\begin{proof}
These are the induction ingredients in the proof of
\cite[Proposition~5]{Wa}. We recall the points needed here.

Set $
 E=N_H(S,f_S)/S C_H(S)$. For $1\neq\pi\in {\bf Z}(S)$, the nilpotency of $f_{\langle\pi\rangle}$ gives
\(
 N_H(S,f_S)\cap C_H(\pi)=S C_H(S).
\)
Thus $E$ acts fixed-point-freely on ${\bf Z}(S)\setminus\{1\}$. The extension and Clifford-theoretic argument in the second paragraph of \cite[proof of Proposition~5, pp.~131--132]{Wa} then gives $l(b_S)=k(E)$, where $k(E)$ denotes the number of conjugacy classes of $E$.
Note that $l(b_S)$ is precisely the
number of $H$-conjugacy classes of $p$-weights of $b$ whose first component is
$S$.
If $l(b)=1$, by \cref{Main1}, there is exactly one $H$-conjugacy class of $p$-weights of $b$ with a representative whose first component is $S$. Consequently, we have $k(E)=1$ and hence $E=1$, so $N_H(S,f_S)/C_H(S)$ is a $p$-group. This
proves {\bf 5.3.1}.

If $q\neq p$ and $K$ is a Sylow $q$-subgroup of $H_R/C_R$, then the nilpotency of $f_{\langle\pi\rangle}$ gives $C_K(\pi)=1$ for every $1\neq\pi\in {\bf Z}(R)$. Thus $K$ acts fixed-point-freely on ${\bf Z}(R)\setminus\{1\}$. The third paragraph of \cite[proof of Proposition~5, pp.~131--132]{Wa} uses precisely this fact to obtain the stated Sylow-subgroup alternatives. Then by its extension argument, $l(b_R)$ is equal to the number of $p$-regular conjugacy classes of $H_R/C_R$.

Finally, it is known that $b_S=f_S^{N_H(S,f_S)}$ has a simple module with vertex $S$. Thus the local block $b_S$ yields one $H$-conjugacy class of $p$-weights of $b$ with first component $S$. Since $l(b)=1$, Theorem~\ref{Main1} implies that $b_R$ yields no $H$-conjugacy class of $p$-weights of $b$ with first component $R$. This allows us to
apply its vertex counting argument in the last paragraph of
\cite[proof of Proposition~5, pp.~131--132]{Wa}.
Let $R\leq D\leq S$ be a defect group
of $b_R$. For $R<T\leq D$, the induction hypothesis and the nilpotency
of $f_T$, together with the vertex counting argument, yields that only nonzero contributions to $\ell((f_R)^{N_H(R, f_R)\cap N_H(T, f_T)})$ come from the case $T=D$. Since the blockwise Alperin
weight conjecture holds for $b_R$, \cite[Lemma~4(ii)]{Wa} yields
$l(b_R)=1$. The preceding paragraph now shows that $H_R/C_R$ has only
one $p$-regular conjugacy class, and hence is a $p$-group. This proves
the final assertion of {\bf 5.3.2}.
\end{proof}

\begin{proof}[\rm\bf Proof of Theorem \ref{Main2}] We prove by induction on $|G|$ that $e_Q$ is nilpotent for any $ Q\leq P$.
Fix a maximal $b$-Brauer pair $(P,e_P)$. For each $Q\leq P$, let
$(Q,e_Q)$ denote the unique $b$-Brauer pair contained in $(P,e_P)$
and set $H_Q=N_G(Q,e_Q)$, $C_Q=C_G(Q)$ and $c_Q=(e_Q)^{H_Q}$.
If $P=1$, then $b$ has defect zero and is nilpotent. We may therefore
assume $P\neq1$.

We claim that $e_Q$ is nilpotent for any $1< Q\leq P$.
If \(C_G(Q)<G\), every \(e_Q\)-Brauer pair \((R,f_R)\) gives the \(b\)-Brauer pair \((QR,f_R)\), so \(l(f_R)=1\); induction on \(|G|\) implies that \(e_Q\) is nilpotent.
If \(C_G(Q)=G\), then \(Q\leq {\bf Z}(G)\) and \(e_Q=b\). The hypothesis descends to the block \(\bar b\) of \(G/Q\) dominated by \(b\).
Therefore \(\bar b\) is nilpotent by induction and hence \(b=e_Q\) is nilpotent by \cite[Lemma~2.5]{AE1}.
This proves the claim.

It remains to prove that $b$ is nilpotent.
We claim that
$H_Q/C_Q$ is a $p$-group
for every nontrivial subgroup $Q\leq P$.
Since $(1,b)$ is a $b$-Brauer pair, the hypothesis gives $l(b)=1$.
By Theorem~\ref{Main1}, the blockwise Alperin weight conjecture holds
for $b$. Statement {\bf 5.3.1} applied to $(P,e_P)$
shows that $N_G(P,e_P)/C_G(P)$ is a $p$-group.
Let $1<Q<P$. Assume that $H_D/C_D$ is a $p$-group for every
$b$-Brauer pair $(D,e_D)$ with $|D|>|Q|$. By Statement
{\bf 5.3.2}, $H_Q/C_Q$ has cyclic or generalized
quaternion Sylow $2$-subgroups when $p$ is odd, whereas, when $p=2$,
all its Sylow subgroups at odd primes are cyclic. {Hence},
Lemma~\ref{Op-prime} applies to the normal subgroup
$C_Q\trianglelefteq H_Q$, the $H_Q$-stable nilpotent block $e_Q$ of
$C_Q$ and the covering block $c_Q$ of $H_Q$. Consequently, the blockwise Alperin weight conjecture
holds for $c_Q$. The final assertion of Statement
{\bf 5.3.2}
now yields that $H_Q/C_Q$ is a $p$-group.
In conclusion, $N_G(Q,e_Q)/C_G(Q)$ is a $p$-group for every nontrivial
$b$-Brauer pair $(Q,e_Q)$. The local characterization of nilpotent blocks \cite{P-1} now implies that $b$ is nilpotent.
\end{proof}


\end{document}